\DeclareMathAlphabet{\mathcal}{OMS}{cmsy}{m}{n}
\DeclareSymbolFont{largesymbols}{OMX}{cmex}{m}{n}
\newtheorem{thm}{Theorem}
\newtheorem{lemma}{Lemma}
\newtheorem{false statement}{False statement}
\theoremstyle{definition}
\newtheorem{definition}{Definition}
\newtheorem{claimmm}{Claim}
\newtheorem{claimmmm}{Claim}
\newtheorem{claiim}{Claim}
\newtheorem{claiiim}{Claim}
\newtheorem{remark}
{Remark}
\newtheorem{case}{Case}
\newtheorem{casee}{Case}
\newtheorem{caseee}{Case}
\newtheorem{subcase}{Subcase}[case]
\newtheorem{problem}{Problem}
\begin{document}

\title{Planar and Outerplanar Spectral Extremal Problems based on Paths
\footnote{Supported by Natural Science Foundation of Xinjiang Uygur Autonomous Region (No. 2024D01C41) and NSFC (No. 12361071).}}
\author{{Xilong Yin, Dan Li\thanks{Corresponding author. E-mail: ldxjedu@163.com (D. Li).}, Jixiang Meng}\\ 
{\footnotesize College of Mathematics and System Science, Xinjiang University, Urumqi 830046, China}}
\date{}

\maketitle {\flushleft\large\bf Abstract:}
Let SPEX$_\mathcal{P}(n,F)$ and SPEX$_\mathcal{OP}(n,F)$ denote the sets of graphs with the maximum spectral radius over all $n$-vertex $F$-free planar and outerplanar graphs, respectively. Define $tP_l$ as a linear forest of $t$ vertex-disjoint $l$-paths and $P_{t\cdot l}$ as a starlike tree with $t$ branches of length $l-1$. Building on the structural framework by Tait and Tobin [J. Combin. Theory Ser. B, 2017] and the works of Fang, Lin and Shi [J. Graph Theory, 2024] on the planar spectral extremal graphs without vertex-disjoint cycles, this paper determines the extremal graphs in $\text{SPEX}_\mathcal{P}(n,tP_l)$ and $\text{SPEX}_\mathcal{OP}(n,tP_l)$ for sufficiently large $n$. When $t=1$, since $tP_l$ is a path of a specific length, our results adapt Nikiforov's findings [Linear Algebra Appl. 2010] under the (outer)planarity condition. When $l=2$, note that $tP_l$ consists of $t$ independent $K_2$, then as a corollary, we generalize the results of Wang, Huang and Lin [arXiv: 2402.16419] and Yin and Li [arXiv:2409.18598v2]. Moreover, motivated by results of Zhai and Liu [Adv. in Appl. Math, 2024], we consider the extremal problems for edge-disjoint paths and determine the extremal graphs in $\text{SPEX}_\mathcal{P}(n,P_{t\cdot l})$ and $\text{SPEX}_\mathcal{OP}(n,P_{t\cdot l})$ for sufficiently large $n$.

\vspace{0.1cm}
\begin{flushleft}
\textbf{Keywords:} Spectral radius; Planar graph; Outerplanar graph; Paths
\end{flushleft}
\textbf{AMS Classification:} 05C50; 05C35

\section{Introduction}
Planar and outerplanar graphs are fundamental objects in graph theory, with their spectral radius capturing essential structural properties.
A graph is planar if it can be embedded in the plane, that is, it can be drawn on the plane in such a way that edges intersect only at their endpoints. A graph is outerplanar if it can be embedded in the plane so that all vertices lie on the boundary of its outer face.
Denote \( G = (V(G), E(G)) \) be a graph with vertex set \( V(G) \) and edge set \( E(G) \). For \( v \in V(G) \).
The \emph{adjacency matrix} \( A(G) \) is defined as \( A_{uv} = 1 \) if \( uv \in E(G) \), and \( 0 \) otherwise. The \emph{spectral radius} \( \rho(G) \) is the largest eigenvalue of \( A(G) \).
Maximizing the spectral radius under planarity constraints has attracted significant attention, due to its combinatorial implications and applications in network science and geography.

Research on spectral extremal problems in planar graphs traces back to Schwenk and Wilson's provocative question: "What can be said about the eigenvalues of a planar graph?" In 1988, Yuan \cite{H. Yuan} established the first upper bound $\rho(G) \leq 5n - 11$ for $n$-vertex planar graph $G$, and this bound was subsequently improved in a series of papers \cite{M.N. Ellingham}, \cite{B. D. Guiduli}, \cite{H. Yuan-2} and \cite{H. Yuan-3}.
On the other hand, Boots and Royle \cite{B. N. Boots}, and independently, Cao and Vince \cite{D. Cao}, conjectured that $K_2 \vee P_{n-2}$ attains the maximum spectral radius among all planar graphs on $n\geq9$ vertices.
In 2017, Tait and Tobin \cite{M. Tait} confirmed the conjecture for sufficiently large $n$. Up to now, for small $n$, this conjecture is still open.
In fact, even earlier than the Boots–Royle–Cao–Vince conjecture, Cvetković and Rowlinson \cite{D. Cvetković} had considered the spectral extremal problem in outerplanar graphs and proposed a conjecture on the maximum spectral radius of outerplanar graphs in 1990.
Tait and Tobin \cite{M. Tait} also confirmed the Cvetković-Rowlinson conjecture for sufficiently large $n$. Recently, Lin and Ning \cite{H.Q. Lin} completely resolved this conjecture.

Given a graph family $\mathcal{F}$, a graph is said to be $\mathcal{F}$-free if it does not contain any $F\in\mathcal{F}$ as a subgraph.
When $\mathcal{F}=\{F\}$, we write $F$-free.
In 2010, Nikiforov \cite{V. Nikiforov-4} proposed a spectral vertion of Turán-type problem 
: Given a graph $F$, what is the maximum spectral radius of an $F$-free graph of order $n$? Specifically, Nikiforov first showed that the spectral extremal $P_{2k+2}$-free and $P_{2k+3}$-free graphs are $S_{n,k}(= K_k \vee K_{n-k}$, the join of $K_k$ and $K_{n-k}$) and $S^{+}_{n,k}$(the graph obtained by adding one edge within the independent set of $S_{n,k}$), respectively.
Currently, the study of spectral Turán problems has been extended to various subgraphs $F$, such as complete graphs $K_r$ \cite{B. Bollobás,H. Wilf}, independent edges $M_{k}$ \cite{L.H. Feng}, paths $P_{k}$ \cite{V. Nikiforov-4}, friendship graphs $F_{k}$ \cite{V. Nikiforov-3}, cycles \cite{V. Nikiforov-1,M.Q. Zhai-1,M.Q. Zhai-2}, wheels $W_{k}$ \cite{Y.H. Zhao,S. Cioabă-2},
stars $\bigcup _{i= 1}^{k}S_{a_{i}}$ \cite{M.Z. Chen-1} and linear forests $\bigcup _{i= 1}^{k}P_{a_{i}}$ \cite{M.Z. Chen-2}. For more about this topic, readers are referred to three surveys \cite{Y.T. Li,V. Nikiforov-3}.
In 2024, Cioabă, Desai and Tait \cite{S. Cioabă-1} confırmed a famous conjecture of Nikiforov \cite{V. Nikiforov-4} about charactering the extremal graphs in SPEX$(n,C_{2k})$ for any $k\geq3$ and sufficiently large $n$. Fang, Lin, Shu and Zhang \cite{L.F. Fang-1} characterized the extremal graphs with $\emph{spex}(n,F)$ for various specific trees $F$.
While these results apply to general graphs, the constraints of planarity pose new challenges.
Let \(P_l\) be the path with \(l\) vertices. A linear forest \(F(l_1,l_2,\ldots l_t)\) consists of $t$ vertex-disjoint paths \(P_{l_1}, P_{l_2}, \dots, P_{l_k}\) such that \(F(l_1,l_2,\ldots l_t) =\bigcup^{t}_{i=1}P_{l_i}\), where $l_1 \geq l_2 \geq \cdots \geq l_t$.
Then we denote \(S(l_1, l_2, \dots, l_k)\) a starlike tree in which removing the central vertex \(v_1\) leaves a linear forest such that \(S(l_1, l_2, \dots, l_k) - v_1 = \bigcup^{k}_{i=1}P_{l_i}\). We say that the starlike tree \(S(l_1, l_2, \dots, l_k)\) has \(k\) branches and the lengths of branches are \(l_1 \geq l_2 \geq \cdots \geq l_k\).
In this paper, we initiate the study of spectral Turán problems based on paths in outerplanar and planar graphs.

\subsection{The spectral Turán problems for outerplanar graphs}
An $F$‐free outerplanar graph on $n$ vertices with maximum spectral radius is called an extremal graph of $\emph{spex}_\mathcal{OP}(n,F)$ and SPEX$_\mathcal{OP}(n,F)$ represents the family of $F$-free graphs of order $n$ with spectral radius equal to $\emph{spex}_\mathcal{OP}(n,F)$.
Recently, Yin and Li \cite{X.L. Yin} provided the structural results for special $F$-free outerplanar graphs and the graph transformations that increase spectral radius, and then characterized the extremal graphs in SPEX$_\mathcal{OP}(n,B_{tl})$ and SPEX$_\mathcal{OP}(n,(k+1)K_2)$ for all $t \geq 1$, $l \geq 3$, $k \geq 1$ and sufficiently large $n$, where $B_{tl}$ is the graph obtained by sharing a common vertex among $t$ edge-disjoint $l$-cycles and $(t+1)K_{2}$ is the disjoint union of $t+1$ copies of $K_2$.
Recall $tP_l$ is a linear forest of $t$ vertex-disjoint $l$-paths, $P_{t\cdot l}$ is a starlike tree with $t$ branches of length $l-1$, and $\mathcal{G}_{\mathcal{OP}}(n, F)$ denote the class of $n$-vertex $F$-free outerplanar graphs. We are concerned with the following problem:
\begin{problem}\label{p1}
What is the maximum spectral radius $\rho(G)$ over all $G \in \mathcal{G}_{\mathcal{OP}}(n, P_{t\cdot l})$ or $\mathcal{G}_{\mathcal{OP}}(n, tP_l)$?
\end{problem}

\begin{remark}\label{rk1}
Problem \ref{p1} is equivalent to determining which $P_{t\cdot l}$-free (or $tP_{l}$-free) outerplanar graphs are the extremal graphs to $\emph{spex}_{\mathcal{OP}}(n,P_{t\cdot l})$ (or $\emph{spex}_{\mathcal{OP}}(n,tP_{l})$). For $t=1$, Nikiforov \cite{V. Nikiforov-4} determined that the spectral extremal graph among $P_{2k+2}$‐free and $P_{2k+3}$‐free graphs is $S_{n,k} \cong K_{k} \vee \overline{K_{n-k}}$ and $S^{+}_{n,k}$ for $k \geq 1$, respectively. In addition, $S_{n,k}$ and $S^{+}_{n,k}$ are outerplanar graphs for $k=1$ obviously. This implies that we already have the answer about the extremal graph to $\emph{spex}_{\mathcal{OP}}(n,P_{l})$ for $l = 4,5$ and the case for $l=2,3$ are trivial.
\end{remark}
\begin{definition}
For three positive integers $n,n_1,n_2$ with $n\geq n_1>n_2 \geq 1$. Define $H(n_1, n_2)$ as follows:
\[  
H_{\mathcal{OP}}(n_1, n_2) = \begin{cases}   
P_{n_1} \cup \frac{n-1-n_1}{n_2} P_{n_2}, & \text{if } n_2|(n-1-n_1); \\  
P_{n_1} \cup \left\lfloor \frac{n-1-n_1}{n_2} \right\rfloor P_{n_2} \cup P_{n-1-n_1-\left\lfloor \frac{n-1-n_1}{n_2} \right\rfloor n_2}, & \text{otherwise.}  
\end{cases}  
\]
\end{definition}

The proofs of the following Theorems \ref{thm1} and \ref{thm2} can provide a solution Problem \ref{thm1}.

\begin{thm}\label{thm1}
For $n_0 = max \{1.27\times 10^7, 6.5025\times2^{(t-1)(l-1)+l}\}$, 
 \begin{description}
   \item[(i)] When $t=1$, $l \geq 4 $ and $n \geq max\{n_0, (5.08\left\lfloor\frac{l-2}2\right\rfloor)^2+1\}$, the extremal graph to $\textit{spex}_{\mathcal{OP}}(n,P_{t\cdot l})$ is $K_1 \vee H_{\mathcal{OP}}(\lceil\frac{l-2}{2}\rceil,\lfloor\frac{l-2}{2}\rfloor)$;
   \item[(ii)] When $t=2$, $l \geq 3 $ and $n \geq max\{n_0, (5.08\left\lfloor\frac{2l-3}2\right\rfloor)^2+1\}$, the extremal graph to $\textit{spex}_{\mathcal{OP}}(n,P_{t\cdot l})$ is $K_1 \vee H_{\mathcal{OP}}(\lceil \frac{2l-3}{2} \rceil,\lfloor \frac{2l-3}{2} \rfloor)$;
   \item[(iii)] When $t\geq 4$, $l \geq 3 $ and $n \geq n_0$, the extremal graph to $\textit{spex}_{\mathcal{OP}}(n,P_{t\cdot l})$ is $K_1 \vee H_{\mathcal{OP}}(tl-t-1,l-2)$;
 \end{description}
\end{thm}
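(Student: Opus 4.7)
My plan is to follow the framework of Tait--Tobin as refined in the outerplanar setting by Yin--Li, combined with a subgraph analysis adapted to the starlike tree $P_{t\cdot l}$. Let $G^* \in \mathrm{SPEX}_{\mathcal{OP}}(n, P_{t\cdot l})$, with spectral radius $\rho$ and Perron eigenvector $\mathbf{x}$ normalized so that $\max_v x_v = x_{u^*} = 1$.

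First, I would verify that each candidate $K_1 \vee H_{\mathcal{OP}}(\cdot,\cdot)$ is outerplanar and $P_{t\cdot l}$-free. In $K_1 \vee \bigcup_i P_{a_i}$, a copy of $P_{t\cdot l}$ rooted at the apex requires $t$ vertex-disjoint branches of $l$ vertices each, so the number of such branches is at most $\sum_i \lfloor a_i/(l-1)\rfloor$. The parameters in (i)--(iii) are chosen so that this sum equals exactly $t-1$; for $t\in\{1,2\}$, the longest path through the apex furthermore imposes the near-balanced split of lengths $\lceil\frac{l-2}{2}\rceil, \lfloor\frac{l-2}{2}\rfloor$ or $\lceil\frac{2l-3}{2}\rceil, \lfloor\frac{2l-3}{2}\rfloor$. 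Direct expansion through the apex yields $\rho \geq \sqrt{n-1}$.

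Second, I would derive the form of $G^*$. Using outerplanarity ($|E(G^*)| \leq 2n-3$) and the lower bound on $\rho$, an iterated application of the eigenequation $\rho x_v = \sum_{w \sim v} x_w$ gives a dominant vertex $u^*$ of degree at least $n - C(t,l)$. A local edge swap argument (deleting low-weight edges and joining $u^*$ to each non-neighbor would strictly increase $\rho$ unless creating a forbidden $P_{t\cdot l}$) promotes this to $d(u^*) = n-1$. Hence $G^* = K_1 \vee H$, and outerplanarity of $K_1 \vee H$ forces $H$ to be a linear forest $\bigcup_i P_{a_i}$.

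Third, imposing $P_{t\cdot l}$-freeness yields $\sum_i \lfloor a_i/(l-1)\rfloor \leq t-1$, together with the longest-through-apex-path bound $a_{(1)} + a_{(2)} \leq l-2$ when $t=1$ and $a_{(1)} + a_{(2)} \leq 2l-3$ when $t=2$. Within this class I would compare candidates via a quotient-matrix reduction and eigenvector perturbation: balancing two short components strictly increases $\rho$, while routing surplus length into a single long $P_{t(l-1)-1}$ strictly increases $\rho$ when the through-apex bound is slack. This drives the extremum to $H_{\mathcal{OP}}(\lceil\frac{l-2}{2}\rceil,\lfloor\frac{l-2}{2}\rfloor)$ in (i), to $H_{\mathcal{OP}}(\lceil\frac{2l-3}{2}\rceil,\lfloor\frac{2l-3}{2}\rfloor)$ in (ii), and to $H_{\mathcal{OP}}(tl-t-1, l-2)$ in (iii).

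The main obstacle is the spectral comparison in the last step between the balanced configuration and the asymmetric "one long path plus many copies of $P_{l-2}$" configuration: both perturbations shift the apex eigenvector entry and the tail values along each path component, so the trade-off must be analyzed via sharp estimates on the Perron components of a long path attached to a high-weight vertex. The omission of the case $t=3$ in the theorem strongly suggests this is precisely where the two optima cross and the comparison becomes delicate. A secondary difficulty is extracting the explicit threshold $n_0 = \max\{1.27 \times 10^7, 6.5025 \times 2^{(t-1)(l-1)+l}\}$; this requires careful tracking of the constants through the degree-concentration estimate and the swap lemma.
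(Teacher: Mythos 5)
Your skeleton for (i)--(ii) (structural theorem giving $G\cong K_1\vee H$ with $H$ a linear forest, a combinatorial freeness criterion, then spectral optimization by local moves) is the same as the paper's, but the spectral mechanism you invoke in the decisive step is backwards and would fail. The available transformation lemma (Yin--Li; the paper's Lemma \ref{lm3}) says that the $(s_1,s_2)$-transformation, which lengthens the \emph{longer} path at the expense of the shorter one, strictly increases $\rho(K_1\vee H)$ for large $n$; ``balancing two short components'' does \emph{not} increase $\rho$ (e.g.\ $\rho(K_1\vee(P_3\cup P_1\cup H_0))>\rho(K_1\vee(P_2\cup P_2\cup H_0))$). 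The balanced split $\lceil\frac{l-2}{2}\rceil,\lfloor\frac{l-2}{2}\rfloor$ arises instead because, under $n_1+n_2\le l-2$ with $n_1\ge n_2$, almost all of the $n-1$ forest vertices lie in components of size $n_2$, so one must maximize $n_2$; the paper proves $n_2=\lfloor\frac{l-2}{2}\rfloor$ by a bespoke global perturbation (dismantle one $P_{n_2}$, shorten $P_{n_1}$ by one vertex, and extend $n_2+1$ other components, i.e.\ delete $n_2$ edges and add $n_2+1$), controlled by the eigenvector bounds $\frac1\rho\le x_u\le\frac1\rho+\frac{2.04}{\rho^2}$ and $\rho\ge\sqrt{n-1}$. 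Nothing in your plan plays this role, and the ``balancing'' monotonicity you propose to prove is false in this regime. Your diagnosis of the excluded case $t=3$ is also off: it is omitted because the centre of $P_{3\cdot l}$ has degree $3$ and can sit \emph{inside} the forest (path vertices of $K_1\vee H$ have degree up to $3$), so the apex-rooted freeness criterion breaks down --- not because two optima cross.

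For $t\ge4$ there is a further genuine gap. Your constraint $\sum_i\lfloor a_i/(l-1)\rfloor\le t-1$ is the right criterion (the centre must be the apex when $t\ge4$), but the $(s_1,s_2)$-transformations you would use to ``route surplus length into a single long $P_{t(l-1)-1}$'' can destroy $P_{t\cdot l}$-freeness: two components $P_{l-2}\cup P_{l-2}$ contribute $0$ to the floor-sum, while after the transformation $P_{l-1}\cup P_{l-3}$ contributes $1$. So the comparison between one long component and several medium ones is not settled by the transformation lemma, and you give no substitute (the paper needs exactly such extra work in Theorem \ref{thm2}, via the parameter $r$ and another edge-relocation perturbation). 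The paper avoids this entirely in Theorem \ref{thm1}(iii): since $B_{tl}$ contains $P_{t\cdot l}$, every $P_{t\cdot l}$-free graph is $B_{tl}$-free, and the known extremal $B_{tl}$-free outerplanar graph from Yin--Li is itself $P_{t\cdot l}$-free, hence it is automatically extremal here; likewise (ii) is immediate from (i) because $P_{2\cdot l}\cong P_{2l-1}$. If you keep the direct route, you must supply the constrained optimization argument; as written, the proposal does not.
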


\begin{thm}\label{thm2}
For
$n \geq N+\frac{3}{2}+3\sqrt{N-\frac{7}{4}}$, where $N = max\{1.27\times 10^7, 6.5025\times2^{tl}, (5.08\left\lfloor\frac{l-2}2\right\rfloor)^2+1\}$,
\begin{description}
   \item[(i)]When $t=1$ and $l \geq 4$, the extremal graph to $\textit{spex}_{\mathcal{OP}}(n,tP_l)$ is $K_1 \vee H_{\mathcal{OP}}(\lceil\frac{l-2}{2}\rceil,\lfloor\frac{l-2}{2}\rfloor)$;
   \item[(i)]When $t \geq 2$ and $l \geq 3$, the extremal graph to $\textit{spex}_{\mathcal{OP}}(n,tP_l)$ is $K_1 \vee H_{\mathcal{OP}}(tl-l-1,l-1)$.
\end{description}
\end{thm}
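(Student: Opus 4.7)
The plan is to follow the three-stage scheme standard for spectral extremal problems in outerplanar graphs, adapting the high-degree apex framework of Tait--Tobin and the linear-forest transformations developed by Yin and Li in the reference cited just above. Let $G^{*}$ be an extremal graph realising $\mathrm{spex}_{\mathcal{OP}}(n,tP_{l})$. First I would verify that the candidate graphs $K_{1}\vee H_{\mathcal{OP}}(tl-l-1,l-1)$ (for $t\geq 2$) and $K_{1}\vee H_{\mathcal{OP}}(\lceil\tfrac{l-2}{2}\rceil,\lfloor\tfrac{l-2}{2}\rfloor)$ (for $t=1$) are themselves $tP_{l}$-free and outerplanar: outerplanarity is immediate from the apex-over-linear-forest structure, while $tP_{l}$-freeness reduces to the combinatorial observation that at most one $P_{l}$ in a vertex-disjoint packing inside $K_{1}\vee F$ may contain the apex, so the packing number equals $\mu(F)+\varepsilon$ with $\varepsilon\in\{0,1\}$, yielding the constraint $\mu(F)=\sum_{i}\lfloor a_{i}/l\rfloor\le t-2$ (when $t\ge 2$) or $a_{1}+a_{2}\le l-2$ (when $t=1$). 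This produces a concrete lower bound $\rho(G^{*})=\Theta(\sqrt{n})$.

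Second, I would extract the skeleton of $G^{*}$. Using the quantitative apex-lemmas in the outerplanar toolkit (Yin--Li, building on Tait--Tobin), the spectral lower bound forces some $u\in V(G^{*})$ with $d(u)\ge n-c$ for a constant $c=c(t,l)$; set $N=N_{G^{*}}(u)$ and $R=V(G^{*})\setminus(N\cup\{u\})$. Outerplanarity of $G^{*}[\{u\}\cup N]$ forces $G^{*}[N]$ to be a linear forest. A Rayleigh-quotient swap that deletes all edges at each $w\in R$ and adds the edge $uw$ strictly increases $\rho$ while preserving $tP_{l}$-freeness; the numerical hypothesis $n\ge N+\tfrac{3}{2}+3\sqrt{N-\tfrac{7}{4}}$ is calibrated exactly so this swap is strict. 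Iterating gives $R=\varnothing$ and hence $G^{*}=K_{1}\vee F$ for some linear forest $F=P_{a_{1}}\cup\cdots\cup P_{a_{k}}$ on $n-1$ vertices. Third, I would run the standard component-rearrangement lemmas on $F$ subject to the constraint above: (a) merging any two components both shorter than the allowed ``long'' length into one longer component strictly increases $\rho(K_{1}\vee F)$ when the constraint is preserved, and (b) among forests with one distinguished long component and all others of a uniform length, taking that uniform length as large as the constraint allows is optimal. These transformations drive $F$ onto exactly $H_{\mathcal{OP}}(tl-l-1,l-1)$ when $t\ge 2$ and onto $H_{\mathcal{OP}}(\lceil\tfrac{l-2}{2}\rceil,\lfloor\tfrac{l-2}{2}\rfloor)$ when $t=1$, proving (i) and (ii).

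The main obstacle I anticipate is the fine spectral comparison in step three between near-extremal forests: for example $H_{\mathcal{OP}}(l-3,1)$ versus $H_{\mathcal{OP}}(\lceil\tfrac{l-2}{2}\rceil,\lfloor\tfrac{l-2}{2}\rfloor)$ in the case $t=1$, and $H_{\mathcal{OP}}(tl-l-1,l-2)$ versus $H_{\mathcal{OP}}(tl-l-1,l-1)$ when $t\ge 2$. Both packings satisfy the forbidden-subgraph condition, yet their spectral radii differ only by a lower-order term; separating them requires a careful Perron-vector computation exploiting the path-characteristic-polynomial recurrence on each component of $F$ together with the eigenvalue equation at the apex. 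The structural second step is comparatively routine given the existing outerplanar toolkit, and the $t\ge 2$ case of the optimisation is combinatorially cleaner because the asymmetric choice $a_{1}=tl-l-1$ is already forced by the tight through-the-apex bound $\mu(F)\le t-2$.
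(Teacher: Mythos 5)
Your overall three-stage scheme (apex vertex, linear-forest neighborhood, path rearrangements) is the same skeleton the paper uses, but there is a genuine gap at the heart of your step one, and it propagates through step three. You claim that $tP_l$-freeness of $K_1\vee F$ reduces to $\mu(F)=\sum_i\lfloor a_i/l\rfloor\le t-2$ when $t\ge 2$. That is only a sufficient condition, not a characterization: since at most one path of a packing can pass through the apex, a forest with $\mu(F)=t-1$ can still yield a $tP_l$-free join whenever no maximum packing extends through the apex. For every $l\ge 4$ this regime is nonempty and contains serious competitors, e.g.\ $F=P_{(t-1)l+l-2-x}\cup P_x\cup P_x\cup\cdots$ with $x\le\lfloor\frac{l-2}{2}\rfloor$: here the long component already packs $t-1$ disjoint $P_l$'s, the residual pieces together with the apex are $P_l$-free, and $\rho(K_1\vee F)$ is within lower-order terms of the claimed extremal value. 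Your optimization "subject to the constraint above" never meets these graphs, so at best you prove extremality within a proper subfamily. This excluded regime is exactly where the paper does its hardest work (the subcase $r=t-1$): it needs the refined claims on the shape of such forests and then an edge-rerouting argument (delete the edges of a short component and one edge of the long path, reattach to lengthen the short components), controlled by the Perron-vector bounds $\frac1\rho\le x_u\le\frac1\rho+\frac{2.04}{\rho^2}$, to show these competitors lose to $K_1\vee H_{\mathcal{OP}}(tl-l-1,l-1)$. The "fine comparison" you anticipate ($H_{\mathcal{OP}}(tl-l-1,l-2)$ versus $H_{\mathcal{OP}}(tl-l-1,l-1)$) is not the hard one --- both lie in your restricted family and are separated by the standard $(s_1,s_2)$-transformation lemma; the comparison you actually need is the one your freeness characterization hides. (For $t=1$, your condition $a_1+a_2\le l-2$ is correct, and that part of the plan matches the paper.)

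A secondary but real issue: you assert that the hypothesis $n\ge N+\frac32+3\sqrt{N-\frac74}$ is calibrated so that your local swap emptying $R$ is strict. In the paper this hypothesis plays a different and necessary role: the structural lemma producing the dominating vertex is stated for \emph{connected} extremal graphs, so one must separately exclude a disconnected extremal graph, which is done by comparing the Shu--Hong bound $\rho(G_1)\le\frac32+\sqrt{n_1-\frac74}$ on a component with $\rho(K_{1,n-1})=\sqrt{n-1}$; that is precisely where this inequality on $n$ is consumed. Your proposal never addresses the possibility that the extremal graph is disconnected, so even the structural step two is incomplete as written unless you add such an argument (or rework the apex extraction so it does not presuppose connectivity).
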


\subsection{The spectral Turán problems for planar graphs}
Furthermore, we aim to extend the above results to the planar spectral Tur$\mathrm{\acute{a}}$n problems for paths.
An $F$‐free planar graph on $n$ vertices with maximum spectral radius is called an extremal graph of spex$_\mathcal{P}(n,F)$ and SPEX$_\mathcal{P}(n,F)$ represents the family of $F$-free graphs of order $n$ with spectral radius equal to $\emph{spex}_\mathcal{P}(n,F)$.
In 2022, Zhai and Liu \cite{M.Q. Zhai-2} determined the extremal graphs in SPEX$_\mathcal{P}(n,\mathcal{F})$ when $\mathcal{F}$ is the family of $k$ edge-disjoint cycles.
In 2023, Fang, Lin and Shi \cite{L.F. Fang-2} characterized the extremal graphs in SPEX$_\mathcal{P}(n,tC_\ell)$ and SPEX$_\mathcal{P}(n,tC)$ by providing the structural theorem and the graph transformations that increase the spectral radius, where $tC_\ell$ is $t$ vertex-disjoint $\ell$-cycles, and $tC$ is the family of $t$ vertex-disjoint cycles without length restriction.
As application, Zhang and Wang \cite{H.R. Zhang} characterized the unique extremal planar graph with the maximum spectral radius among $C_{l,l}$-free planar graphs and $\theta_k$-free planar graphs on $n$ vertices, where $C_{l,l}$ be a graph obtained from $2C_l$ such that the two cycles share a common vertex and $\theta_k$ are the Theta graph of order $k \geq 4.$
Similarly, Wang, Huang and Lin \cite{X.L. Wang} optimized the above structural theorem and graph transformation, which determining the extremal graphs in $\operatorname{SPEX}_{\mathcal{P}}(n,W_{k}),\operatorname{SPEX}_{\mathcal{P}}(n,F_{k})$ and SPEX$_\mathcal{P}(n,(k+1)K_{2})$ when $W_{k},F_{k}$ and $(k+1)K_{2}$ are the wheel graph of order $k$, the friendship graph of order $2k+1$ and the disjoint union of $k+1$ copies of $K_2$, respectively.
Recently, Yin and Li \cite{X.L. Yin} further determined the extremal graphs in SPEX$_\mathcal{P}(n,B_{tl})$ for all $t \geq 1$, $l \geq 3$ and sufficiently large $n$, where $B_{tl}$ is the graph obtained by sharing a common vertex among $t$ edge-disjoint $l$-cycles.
Recall $tP_l$ is a linear forest of $t$ vertex-disjoint $l$-paths, $P_{t\cdot l}$ is a starlike tree with $t$ branches of length $l-1$, and let $\mathcal{G}_{\mathcal{P}}(n, F)$ denote the class of $n$-vertex $F$-free planar graphs. Naturally, we consider the planar version of Problem \ref{p1}:
\begin{problem}\label{p2}
What is the maximum spectral radius $\rho(G)$ over all $G \in \mathcal{G}_{\mathcal{P}}(n, P_{t\cdot l})$ or $\mathcal{G}_{\mathcal{P}}(n, tP_l)$?
\end{problem}


\begin{remark}\label{rk2}
Similar to Remark \ref{rk1}, Problem \ref{p2} is equivalent to determining which $P_{t\cdot l}$-free (or $tP_{l}$-free) planar graphs are the extremal graphs to $\emph{spex}_{\mathcal{OP}}(n,P_{t\cdot l})$ (or $\emph{spex}_{\mathcal{P}}(n,tP_{l})$). Note that $S_{n,k}$ and $S^{+}_{n,k}$ are planar graphs for $k=1,2$. This implies that we already have the answer about the extremal graph to $\emph{spex}_{\mathcal{P}}(n,P_{l})$ for $l = 4,5,6,7$ and the cases for $l=2,3$ are trivial.
\end{remark}

\begin{definition}
For three positive integers $n,n_1,n_2$ with $n\geq n_1>n_2 \geq 1$. Define $H(n_1, n_2)$ as follows:
\[  
H_{\mathcal{P}}(n_1, n_2) = \begin{cases}   
P_{n_1} \cup \frac{n-2-n_1}{n_2} P_{n_2}, & \text{if } n_2|(n-2-n_1); \\  
P_{n_1} \cup \left\lfloor \frac{n-2-n_1}{n_2} \right\rfloor P_{n_2} \cup P_{n-2-n_1-\left\lfloor \frac{n-2-n_1}{n_2} \right\rfloor n_2}, & \text{otherwise.}  
\end{cases}  
\]
\end{definition}

\begin{definition}
For four positive integers $n,n_1,n_2,n_3$ with $n\geq n_1>n_2>n_3 \geq 1$. Define $H(n_1, n_2, n_3)$ as follows:
\[  
H_{\mathcal{P}}(n_1, n_2, n_3) = \begin{cases}   
P_{n_1} \cup P_{n_2} \cup \frac{n-2-n_1-n_2}{n_3} P_{n_3}, & \text{if } n_3|(n-2-n_1-n_2); \\  
P_{n_1} \cup P_{n_2} \cup \left\lfloor \frac{n-2-n_1-n_3}{n_3} \right\rfloor P_{n_3} \cup P_{n-2-n_1-n_2-\left\lfloor \frac{n-2-n_1-n_2}{n_3} \right\rfloor n_3}, & \text{otherwise.}  
\end{cases}  
\]
\end{definition}

Our proofs of Theorems \ref{thm3} and \ref{thm4} provide a solution to Problem \ref{p2}.

\begin{thm}\label{thm3}
For $n_0 = max\{2.67\times 9^{17}, 10.2\times2^{(t-1)(l-1)+l-3}+2\}$,
 \begin{description}
   \item[(i)]When $t=1$, $l \geq 6$ and $n \geq max \{n_0, \frac{625}{32}{\lfloor \frac{l-3}{3} \rfloor}^2 +2 \}$, the extremal graph to $\textit{spex}_{\mathcal{P}}(n,P_{t\cdot l})$ is $K_2 \vee H_{\mathcal{P}}(l-3-2\lfloor\frac{l-3}{3}\rfloor,\lfloor\frac{l-3}{3}\rfloor)$;
   \item[(ii)]When $t=2$, $l \geq 4$ and $n \geq max \{n_0, \frac{625}{32}{\lfloor \frac{2l-4}{3} \rfloor}^2 +2 \}$, the extremal graph to $\textit{spex}_{\mathcal{P}}(n,P_{t\cdot l})$ is $K_2 \vee H_{\mathcal{P}}(2l-4-2\lfloor\frac{2l-4}{3}\rfloor,\lfloor\frac{2l-4}{3}\rfloor)$;
   \item[(iii)]When $t \geq 5$, $l \geq 3$ and $n \geq max \{n_0, \frac{625}{32}{\lfloor \frac{l-3}{2} \rfloor}^2 +2 \}$, the extremal graph to $\textit{spex}_{\mathcal{P}}(n,P_{t\cdot l})$ is $K_2 \vee H_{\mathcal{P}}(tl-t-l,l-2)$;
 \end{description}
\end{thm}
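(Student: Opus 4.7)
The plan is to establish the theorem in three phases, mirroring the outerplanar strategy of Theorem~\ref{thm1} but adapted to the planar setting, where the extremal family takes the form $K_{2}\vee H$ (two apex vertices) rather than $K_{1}\vee H$. First I would verify that $K_{2}\vee H_{\mathcal{P}}(\cdot)$ is planar, which is immediate since joining two universal vertices to a linear forest preserves planarity, and $P_{t\cdot l}$-free: any copy of $P_{t\cdot l}$ with centre inside the apex $K_{2}$ uses $t$ internally vertex-disjoint branches of length $l-1$, at most one branch can pass through the other apex, and the remaining $t-1$ branches must realise vertex-disjoint sub-paths of length $l-1$ inside the linear forest $H$. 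In each of the three cases a direct count shows the chosen $H_{\mathcal{P}}(\cdot)$ admits at most $t-2$ such sub-paths. This gives the required lower bound together with an asymptotic estimate $\rho(G^{*})\geq \sqrt{2(n-2)}+o(1)$ that drives the subsequent upper bound argument.

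For the upper bound, let $G^{*}\in \mathrm{SPEX}_{\mathcal{P}}(n,P_{t\cdot l})$ with Perron vector $\mathbf{x}$. Invoking the Tait--Tobin framework together with the planar refinements of Fang--Lin--Shi and Wang--Huang--Lin cited above, I would prove successively: (a) there exist two vertices $u_{1},u_{2}$ with $d(u_{i})\geq n-C(t,l)$ for an explicit constant, combining the Perron eigenvalue equation with the planarity bound $e(G^{*})\leq 3n-6$; (b) a shifting argument preserving planarity and $P_{t\cdot l}$-freeness, which replaces edges near vertices of small Perron weight by edges at $u_{i}$, forces $u_{1}u_{2}\in E(G^{*})$ and $N(u_{i})\supseteq V(G^{*})\setminus\{u_{i}\}$, so that $G^{*}=K_{2}\vee H$; (c) since $G^{*}$ is planar and $\{u_{1},u_{2}\}$ is a universal pair, $H$ must be outerplanar.

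It then remains to identify the outerplanar graph $H$ on $n-2$ vertices maximising $\rho(K_{2}\vee H)$ subject to $K_{2}\vee H$ being $P_{t\cdot l}$-free. A further round of edge shifting, which breaks branching vertices and replaces any non-path component by a path (valid because new long sub-paths cannot be created in this way), reduces $H$ to a linear forest $P_{n_{1}}\cup\cdots\cup P_{n_{k}}$. The $P_{t\cdot l}$-free condition becomes the explicit combinatorial constraint that the $P_{l-1}$-packing number of $(n_{1},\ldots,n_{k})$ is at most $t-2$. A Perron-weight exchange argument, moving a leaf from one component to another, then shows the optimal linear forest has components of at most three distinct sizes matching the structure of $H_{\mathcal{P}}(n_{1},n_{2})$; direct Rayleigh-quotient comparisons pin down the extremal parameters in each regime $t=1$, $t=2$, and $t\geq 5$.

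The hardest step is (b), showing that both apex vertices must be universal in $G^{*}$. Planarity forbids many otherwise natural edge-swaps, so the replacement edges must be chosen delicately to preserve planarity and $P_{t\cdot l}$-freeness while strictly increasing $\rho$; the near-exponential threshold $n_{0}$ in the statement comes entirely from this phase. The combinatorial optimisation of the third phase is routine once the linear-forest reduction is in hand, but explains the case split in the statement: the balanced structure wins for $t\in\{1,2\}$, while an asymmetric ``one long path plus many short paths'' structure wins for $t\geq 5$, with $t\in\{3,4\}$ being an intermediate regime whose exact extremiser is presumably more delicate to pin down.
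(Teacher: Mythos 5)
Your Phase 2 is essentially the paper's route (it cites the Wang--Huang--Lin structural lemma to get two dominating vertices $u',u''$ with $G[R]$ a union of paths/cycles, and then proves $u'u''\in E(G)$ by an edge exchange, which is indeed where the threshold $n_0$ enters), so that part is sound in outline. The genuine gap is in your combinatorial characterization of $P_{t\cdot l}$-freeness of $K_2\vee H$, which is the engine of both your lower bound and your final optimization. You claim the condition is ``the $P_{l-1}$-packing number of $(n_1,\dots,n_k)$ is at most $t-2$.'' For $t=1$ this reads ``packing number $\le -1$'' and is vacuous, and for $t=2$ it is simply the wrong condition: $P_{2\cdot l}\cong P_{2l-1}$ is a path, so freeness is a longest-path condition, namely $n_1(H)+n_2(H)+n_3(H)\le 2l-4$ (the longest path in $K_2\vee H$ has $n_1+n_2+n_3+2$ vertices, using both apexes to concatenate three components). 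A linear forest with no $P_{l-1}$ component can still give $n_1+n_2+n_3+2\ge 2l-1$, so optimizing under your packing constraint would produce graphs that are not even $P_{2\cdot l}$-free; conversely, the true extremal $H_{\mathcal{P}}(2l-4-2\lfloor\frac{2l-4}{3}\rfloor,\lfloor\frac{2l-4}{3}\rfloor)$ is not singled out by your condition. The same defect infects your Phase 1 ``direct count'' that the candidate graphs admit at most $t-2$ such sub-paths. Even for $t\ge 5$ the packing formulation is only approximately right: a copy of $P_{t\cdot l}$ centered at one apex may route one branch through the other apex and thereby hop between components of $H$, so the correct statement needs that extra branch accounted for (the paper's Claims 4.1--4.2 and, in the $tP_l$ analysis, Claims of the form $n_1+n_2+n_3\le\cdots$ do exactly this bookkeeping).

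Two further points. First, your reduction of $H$ to a linear forest by ``breaking branching vertices and replacing any non-path component by a path (valid because new long sub-paths cannot be created in this way)'' is false as stated: replacing, say, a spider or a cycle by a path on the same vertex set can strictly increase the longest path. The paper avoids this entirely because the cited structural lemma already gives paths and cycles, and the adjacency $u'u''\in E(G)$ eliminates the cycle case. Second, your plan to settle $t\ge5$ by the same Rayleigh-quotient optimization is a legitimate alternative in principle, but you have not supplied it and it is substantially more work than the paper's argument, which observes that every $P_{t\cdot l}$-free graph is $B_{tl}$-free and that the known spectral extremal $B_{tl}$-free planar graph $K_2\vee H_{\mathcal{P}}(tl-t-l,l-2)$ is itself $P_{t\cdot l}$-free, so it must be the extremal graph; similarly the paper dispatches $t=2$ by the identity $P_{2\cdot l}\cong P_{2l-1}$ rather than by a separate optimization. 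As it stands, the misstated freeness criterion means your Phase 3 would certify the wrong extremal graphs in cases (i) and (ii).
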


\begin{thm}\label{thm4}
For $N_0 = max\{2.67 \times 9^{17}, 10.2 \times 2^{tl-3}+2\}$
 \begin{description}
   \item[(i)]When $t=1$, $l \geq 6$, $N= max \{N_0, \frac{625}{32}{\lfloor \frac{l-3}{3} \rfloor}^2 +2\}$, and $n \geq N+\frac{3}{2}\sqrt{2N-6}$, the extremal graph to $\textit{spex}_{\mathcal{P}}(n,tP_l)$ is $K_2 \vee H_{\mathcal{P}}(l-3-2\lfloor\frac{l-3}{3}\rfloor,\lfloor\frac{l-3}{3}\rfloor)$;
   \item[(ii)]When $t=2$, $l \geq 4$, $N= max \{N_0, \frac{625}{32}{\lfloor \frac{l-2}{2} \rfloor}^2 +2\}$, and $n \geq N+\frac{3}{2}\sqrt{2N-6}$, the extremal graph to $\textit{spex}_{\mathcal{P}}(n,tP_{l})$ is $K_2 \vee H_{\mathcal{P}}(l-1, \lceil \frac{l-2}{2}\rceil, \lfloor \frac{l-2}{2}\rfloor)$;
   \item[(iii)]When $t \geq 3$, $l \geq 3$, $N= max \{N_0, \frac{625}{32}({\lfloor \frac{l-2}{2} \rfloor+1})^2 +2,
        \}$, and $n \geq N+\frac{3}{2}\sqrt{2N-6}$, the extremal graph to $\textit{spex}_{\mathcal{P}}(n,tP_{l})$ is $K_2 \vee H_{\mathcal{P}}(tl-2l-1,l-1)$;
 \end{description}
\end{thm}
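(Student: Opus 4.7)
The plan is to follow the now-standard Tait--Tobin plus Fang--Lin--Shi scheme for planar spectral Tur\'an problems, adapted to the linear-forest setting and reusing much of the machinery developed for Theorem \ref{thm3}. I would first verify that each proposed extremal graph $G = K_2 \vee H_{\mathcal{P}}(\cdot)$ is planar --- immediate from the fact that $K_2 \vee G_0$ is planar if and only if $G_0$ is outerplanar, and a linear forest is outerplanar --- and that $G$ is $tP_l$-free. The $tP_l$-freeness is a counting argument: any $tP_l$ inside $K_2 \vee G_0$ can have at most two of its component $P_l$'s meet the apex vertices $u, v$, so at least $t-2$ components must lie entirely inside $G_0$, and direct inspection of the component-lengths in $H_{\mathcal{P}}(\cdot)$ rules this out in each of cases (i)--(iii). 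This supplies the lower bound $\textit{spex}_{\mathcal{P}}(n, tP_l) \geq \rho(K_2 \vee H_{\mathcal{P}}) = \Omega(\sqrt{n})$.

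For the structural reduction, let $G^* \in \text{SPEX}_{\mathcal{P}}(n, tP_l)$. I would feed the lower bound into the Tait--Tobin framework together with the planar structural theorem of Fang--Lin--Shi (and its refinement by Wang--Huang--Lin) to extract two vertices $u, v$ whose Perron entries dominate and which jointly neighbor all but $O(1)$ of the remaining vertices. A short cleanup --- adding the missing join-edges, which is possible as long as $G_0 := G^* - \{u, v\}$ is outerplanar and which strictly increases $\rho$ --- then upgrades the situation to $G^* = K_2 \vee G_0$. The threshold $n \geq N + \tfrac{3}{2}\sqrt{2N - 6}$ appearing in the hypothesis is precisely what makes this join-completion step go through without accidentally introducing a $tP_l$.

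The heart of the argument is the characterization of $G_0$. By case analysis on how many components of a hypothetical $tP_l$ in $K_2 \vee G_0$ meet $\{u, v\}$, the $tP_l$-freeness translates into a small family of forbidden subgraphs for the outerplanar graph $G_0$. Using the outerplanar structure theory already developed in the proof of Theorem \ref{thm3}, these constraints force $G_0$ to be a linear forest in which essentially at most one component is ``long'' (length $\geq l$) and all other components have length $< l$. A Perron-eigenvector exchange argument --- moving a vertex between two components to equalize their lengths, subject to the admissibility constraint --- then pins down the multiset of path-lengths as $H_{\mathcal{P}}(\cdot)$ with the stated parameters in each of the three cases.

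The main obstacle lies in case (ii), where the extremal $G_0$ splits as the three-part forest $P_{l-1} \cup P_{\lceil (l-2)/2\rceil} \cup P_{\lfloor (l-2)/2\rfloor}$ rather than the uniform two-part pattern of case (iii). With $t=2$, a single $P_l$ can consume two components of $G_0$ by passing through one apex vertex, so the admissibility analysis becomes parity-dependent and requires a more delicate case split than in cases (i) and (iii); this is precisely what drives the case division $t=1$, $t=2$, $t \geq 3$ in the theorem statement, and it is the piece that does not reduce mechanically to the corresponding step in Theorem \ref{thm3}.
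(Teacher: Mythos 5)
Your overall skeleton (Wang--Huang--Lin structural lemma to get two dominating vertices, then optimizing the linear forest $G_0=G[R]$ by spectral exchange arguments) matches the paper, but there are two genuine gaps. First, you never treat the possibility that the extremal graph is disconnected. The structural lemma you invoke applies only to \emph{connected} extremal graphs, and for $tP_l$ ($t\geq 2$) connectivity is not free: joining two components can create a new copy of $tP_l$, so one cannot simply assume the extremal graph is connected. In the paper this is exactly where the hypothesis $n\geq N+\tfrac{3}{2}\sqrt{2N-6}$ is used: if $G$ were disconnected, its spectral-radius component has order $n_1<N$ (after ruling out $n_1\geq N$ by a reattachment argument), and then the Ellingham--Zha bound $\rho\leq 2+\sqrt{2n_1-6}$ is beaten by $\rho(K_2\vee (n-2)K_1)=\tfrac{1+\sqrt{8n-15}}{2}$. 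You instead attribute this threshold to a ``join-completion'' step, which is a misreading of its role; moreover your join-completion sketch omits the compensating edge deletions (removing the edges of up to $t-1$ disjoint $l$-paths in $G[R]$ before adding $u'u''$) that are needed so that adding the edge $u'u''$ does not create a $tP_l$ --- this is the content of the paper's Claim on $u'u''\in E(G)$ and uses the eigenvector bounds of Lemma \ref{lm5}.

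Second, your optimization of $G_0$ is underpowered. You rely only on the vertex-moving $(s_1,s_2)$-transformations (``equalize component lengths''), but the decisive comparisons in cases (ii) and (iii) are between configurations with different numbers $r$ of disjoint $P_l$'s packed inside the longest component (e.g.\ $K_2\vee H_{\mathcal{P}}((t-1)l+l-3-x-y,x,y)$ with $r=t-1$ versus $K_2\vee H_{\mathcal{P}}((t-3)l+l-1,l-1)$ with $r\leq t-3$). These cannot be linked by a chain of $(s_1,s_2)$-transformations staying inside the $tP_l$-free family, so maximality cannot be invoked along the way; the paper instead performs a separate multi-edge switching (delete $n_4^\circ+1$ edges, add $n_4^\circ+2$ edges, estimate the Rayleigh quotient change via $\tfrac{4}{\rho^2}\leq x_{u_i}x_{u_j}\leq \tfrac{4}{\rho^2}+\tfrac{25}{\rho^3}$, using $\rho\geq\sqrt{2n-4}$ and the $\tfrac{625}{32}(\cdot)^2+2$ lower bound on $n$) which strictly increases $\rho$ while dropping $r$, and only then compares within the small-$r$ regime. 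Your proposal contains no substitute for this step, so the ``heart of the argument'' as you describe it does not actually pin down the extremal forest in (ii) and (iii). A minor further point: your claim that ``$K_2\vee G_0$ is planar if and only if $G_0$ is outerplanar'' is false (e.g.\ $G_0$ a triangle with a pendant vertex is outerplanar, yet $K_2\vee G_0$ has $13>3\cdot 6-6$ edges); what is true and what the paper uses is only that $K_2$ joined to a disjoint union of paths is planar.
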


\vspace*{2mm}
The paper is organized as follows.
The Section \ref{sc3} provides the proofs of Theorems \ref{thm1} and \ref{thm2}, respectively.
The Section \ref{sc4} is dedicated to the proofs of Theorems \ref{thm3} and \ref{thm4}, respectively.


\section{Proofs of Theorems \ref{thm1} and \ref{thm2}}\label{sc3}

In this section, we provide the detailed proofs of Theorems \ref{thm1} and \ref{thm2}.
We first introduce some useful and classic spectral tools as follow.
Suppose $A$ is a $n\times n$ real symmetric matrix (or Hermitian matrix in general). Then the eigenvalues of $A$ are all real, and we may label them in non-increasing order as
$$\lambda_1\geq\lambda_2\geq\cdots\geq\lambda_n.$$
Recall that a sequence $\mu_1\geq\cdots\geq\mu_m$ is said to interlace a sequence $\lambda_1\geq\cdots\geq\lambda_n$ with $m<n$ when $\lambda_i\geq\mu_i\geq\lambda_{n-m+i}$ for $i=1,\ldots,m.$ A corollary of Cauchy's Interlacing Theorem states that if $B$ is a principal submatrix of a symmetric matrix $A$, then the eigenvalues of $B$ interlace the eigenvalues of $A[5].$ In particular, the eigenvalues of a proper induced subgraph $H$ of $G$ interlace the eigenvalues of $G.$
Let $\boldsymbol x=(x_1,x_2,\cdots,x_n)^{\mathrm{T}}$ be an eigenvector of $A(G)$ corresponding to $\rho(G).$ The eigenequation for any vertex $v$ of $G$ is
$$\rho(G)x_v=\sum_{uv\in E(G)}x_u.$$
The Rayleigh quotient of $G$ is expressed as:
$$\rho(G)=\max_{\boldsymbol{x}\in\mathbb{R}^n}\frac{\boldsymbol{x}^\mathrm{T}A(G)\boldsymbol{x}}{\boldsymbol{x}^\mathrm{T}\boldsymbol{x}}=\max_{\boldsymbol{x}\in\mathbb{R}^n}\frac{2\sum_{uv\in E(G)}x_ux_v}{\boldsymbol{x}^\mathrm{T}\boldsymbol{x}},$$
where $\mathbb{R}^n$ is the set of $n$-dimensional real numbers.
If $G$ is a connected graph with order $n$, by Perron-Frobenius theorem, then there exists a positive eigenvector $X=(x_1,...,x_n)^T$ corresponding to $\rho(G)$.

We then present a series of key structural lemmas for outerplanar graphs, which provide crucial insights into the properties and characteristics of extremal graphs.

\begin{lemma}[Key Structure of Outerplanar Extremal Graphs]\cite{X.L. Yin}\label{lm1}
 Let $F$ be an outerplanar graph contained in $K_{1} \vee P_{n-1}$ but not contained in $K_{1} \vee ((t-1)K_2\cup(n-2t+1)K_1)$,
where $1 \leq t \leq \frac{n-1}{2} $ and $n\geq max\{1.27\times 10^7,[(64+|V(F)|)^\frac{1}{2}+8]^2\}$. Suppose that $G$ is a connected extremal graph with $\textit{spex}_{\mathcal{OP}}(n,F)$ and $X$ 
is the positive eigenvector of $\rho(G)$ with $\emph{max}_{v\in V(G)}x_v = 1 $.
Then there exists a vertice $u \in V(G)$
   such that $N_G(u)=V(G)\setminus\{u\}$ and $x_{u}=1$, in especially,

 \begin{description}
   \item[(i)]  $G$ contains a copy of $K_{1,n-1}.$
   \item[(ii)] The subgraph $G[N_G(u)]$ is a disjoint union of some paths.
 \end{description}
\end{lemma}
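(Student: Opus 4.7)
The plan is to follow the Tait--Tobin blueprint adapted to the outerplanar/eigenvector setting, in three stages: a lower bound on $\rho(G)$, a degree-concentration step for the Perron vector, and a local edge-swap argument to upgrade ``nearly dominating'' into ``fully dominating,'' after which part (ii) is a short minor-exclusion argument.

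First I would establish a lower bound $\rho(G)\geq\sqrt{n-1}$ by exhibiting a competitor: by hypothesis $F\not\subseteq K_1\vee((t-1)K_2\cup(n-2t+1)K_1)$, and the latter graph is outerplanar, so it lies in the same $F$-free outerplanar class as $G$, and its spectral radius (already exceeding $\sqrt{n-1}$ from the embedded $K_{1,n-1}$) gives the bound. Let $u^*\in V(G)$ satisfy $x_{u^*}=1$. Combining the eigenequation with the outerplanar edge bound $e(G)\leq 2n-3$ yields the crude upper estimate $\rho(G)\leq\sqrt{4n-6}$. Iterating the eigenequation to get $\rho(G)^2 x_v=\sum_w m_{vw}x_w$, where $m_{vw}$ counts walks of length two, and then double-counting against the edge bound, localizes the eigenvector mass near $u^*$ and forces $\deg_G(u^*)\geq n-C$ for an explicit constant $C=C(|V(F)|)$ depending only on $F$. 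This is the step where the concrete threshold on $n$ (involving $|V(F)|$) enters.

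The main obstacle is promoting $\deg_G(u^*)\geq n-C$ to the equality $N_G(u^*)=V(G)\setminus\{u^*\}$. Assume for contradiction some $w\neq u^*$ satisfies $u^*w\notin E(G)$. By the eigenequation, $w$ and its neighborhood carry small eigenvector weight, so one may identify an edge $e$ in the ``low-weight'' region whose deletion is spectrally cheap, delete $e$, and add the edge $u^*w$. One must check (a) the resulting graph is outerplanar (using that the low-weight endpoints have controlled degree, so the modification happens in a locally sparse region), (b) it is still $F$-free (using $F\subseteq K_1\vee P_{n-1}$ together with the fact that any copy of $F$ in the new graph would have to use the new edge $u^*w$, but $F$ already embeds in $G$ after a minor rerouting, contradicting extremality), and (c) $\rho$ strictly increases, which is immediate from the Rayleigh quotient since $x_{u^*}x_w$ dominates the weight at the deleted edge. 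The contradiction yields $N_G(u^*)=V(G)\setminus\{u^*\}$, hence (i) and $x_{u^*}=1$. This edge-swap is the technical heart of the proof, since outerplanarity and $F$-freeness must be preserved simultaneously.

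For part (ii), once $u^*$ is adjacent to every other vertex, the conclusion is a pure minor-exclusion argument. Outerplanar graphs forbid both $K_{2,3}$ and $K_4$ as minors. If some $v\in N_G(u^*)$ had three neighbors $a,b,c$ inside $N_G(u^*)$, the parts $\{u^*,v\}$ and $\{a,b,c\}$ would form a $K_{2,3}$; hence $\Delta(G[N_G(u^*)])\leq 2$. If $G[N_G(u^*)]$ contained a cycle $a_1 a_2\cdots a_k a_1$, contracting $a_3,\ldots,a_k$ into a single vertex produces $K_4$ on $\{u^*,a_1,a_2,\text{contracted}\}$, again forbidden. Therefore $G[N_G(u^*)]$ has maximum degree at most two and is acyclic, so it is a disjoint union of paths, which completes the argument.
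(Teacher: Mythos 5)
The paper does not actually prove Lemma \ref{lm1}: it is imported verbatim from \cite{X.L. Yin}, so your attempt can only be measured against the standard Tait--Tobin-style argument used there. Your overall blueprint (lower bound $\rho\ge\sqrt{n-1}$ from an $F$-free outerplanar competitor containing $K_{1,n-1}$, eigenvector concentration to get a vertex of degree $n-O(1)$, then a local swap to full domination) is the right one, and your part (ii) is correct and complete: once $u^*$ dominates, a vertex of $G[N_G(u^*)]$ with three neighbours inside $N_G(u^*)$ gives a $K_{2,3}$, and a cycle inside $N_G(u^*)$ gives a $K_4$ minor, so the neighbourhood induces a linear forest.

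The genuine gap is in the step that upgrades $\deg(u^*)\ge n-C$ to $N_G(u^*)=V(G)\setminus\{u^*\}$. Deleting one ``spectrally cheap'' edge in a low-weight region and adding $u^*w$ does \emph{not} in general preserve outerplanarity, and ``the modification happens in a locally sparse region'' does not address the actual obstruction: since $u^*$ is already adjacent to almost every vertex, a non-neighbour $w$ with two neighbours $a,b\in N_G(u^*)$ produces, after adding $u^*w$, a $K_4$ on $\{u^*,a,b,w\}$ when $ab\in E(G)$, or a $K_{2,3}$ minor with branch sets $\{a\},\{b\},\{u^*\},\{w\}$ and an $a$--$b$ subpath of $G[N_G(u^*)]$ when $a,b$ lie on a common path there. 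These obstructions live at $w$, so the edges you are forced to delete are the edges incident to $w$ itself (the standard move deletes \emph{all} of them --- there are only boundedly many, each of small weight --- and re-attaches $w$ as a pendant of $u^*$, which trivially keeps the graph outerplanar); an arbitrary cheap edge elsewhere does not work. Your point (b) is also not a proof: the claim that a copy of $F$ in the new graph ``already embeds in $G$ after a minor rerouting, contradicting extremality'' is circular (it would contradict $F$-freeness of $G$, not extremality, and no such rerouting is justified). Preserving $F$-freeness under these swaps is precisely where the hypotheses $F\subseteq K_1\vee P_{n-1}$ and $F\not\subseteq K_1\vee((t-1)K_2\cup(n-2t+1)K_1)$ must be used, whereas you invoke the latter only for the initial lower bound. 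Finally, the concentration step ($\deg(u^*)\ge n-C$) is asserted rather than proved; that is the multi-page quantitative core of Tait--Tobin/Yin--Li, so as written your argument establishes (ii) modulo domination but not the domination statement itself.
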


\begin{lemma}[Eigenvector Bounds]\cite{X.L. Yin}\label{lm2}
Suppose further that G contains $K_{1, n-1}$ as a subgraph. Let $u_{1}$ be the two vertices of $G$ that have degree $n-1$ in $K _{1, n-1}$.
For any vertex $u\in V( G) \setminus \{ u_1\} $, we have 
$$\frac 1\rho \leq x_u\leq \frac 1\rho + \frac {2.04}{\rho ^2}.$$
\end{lemma}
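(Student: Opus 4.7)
The plan is to bound $x_u$ directly from the eigenequation $\rho x_u = \sum_{v \sim u} x_v$, treating the two inequalities separately. For the lower bound I would use that, by Lemma \ref{lm1}, the vertex $u_1$ is adjacent to every other vertex of $G$ and carries the maximum eigenvector entry $x_{u_1}=1$. Applied at any $u \neq u_1$, the eigenequation immediately yields $\rho x_u \geq x_{u_1} = 1$, hence $x_u \geq 1/\rho$.

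For the upper bound the idea is a one-step bootstrap. The key structural input from Lemma \ref{lm1}(ii) is that $G[N_G(u_1)]$ is a disjoint union of paths, so every vertex in $V(G)\setminus\{u_1\}$ has at most two neighbors inside $V(G)\setminus\{u_1\}$. Setting $M = \max_{v \neq u_1} x_v$ and applying the eigenequation at a vertex realizing $M$, one gets $\rho M \leq x_{u_1} + 2M = 1 + 2M$, so $M \leq 1/(\rho - 2)$. Plugging this crude bound back into the eigenequation at an arbitrary $u \neq u_1$ gives
$$\rho x_u \;=\; 1 + \sum_{v \sim u,\, v \neq u_1} x_v \;\leq\; 1 + 2M \;\leq\; 1 + \frac{2}{\rho-2},$$
which rearranges to $x_u \leq \tfrac{1}{\rho} + \tfrac{2}{\rho(\rho-2)}$.

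To upgrade the remainder $2/\bigl(\rho(\rho-2)\bigr)$ to the advertised $2.04/\rho^2$ it suffices to verify $\rho \geq 102$, equivalently $2\rho \leq 2.04(\rho - 2)$. This threshold is comfortably met: since $G$ contains $K_{1,n-1}$, one has $\rho(G) \geq \rho(K_{1,n-1}) = \sqrt{n-1}$, and the hypothesis $n \geq 1.27\times 10^7$ inherited from Lemma \ref{lm1} makes $\sqrt{n-1}$ vastly larger than $102$. I do not anticipate a genuine difficulty in the argument; the only step that requires care is the precise invocation of Lemma \ref{lm1}(ii) to obtain the uniform degree cap of $2$ in $G-u_1$, since without that cap the bootstrap estimate on $M$ would fail to give the desired second-order rate $1/\rho^2$.
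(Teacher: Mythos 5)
Your proof is correct, and since the paper only quotes this lemma from the cited source without reproducing an argument, there is no internal proof to compare against line by line: the lower bound from the eigenequation at $u$ (using $u_1\sim u$ and $x_{u_1}=1$), the bootstrap $M\le 1/(\rho-2)$, the bound $x_u\le \frac1\rho+\frac{2}{\rho(\rho-2)}$, and the numerical step $\frac{2}{\rho(\rho-2)}\le\frac{2.04}{\rho^2}$ once $\rho\ge 102$ (guaranteed by $\rho\ge\rho(K_{1,n-1})=\sqrt{n-1}$ and $n\ge 1.27\times 10^7$) are all sound. Two refinements are worth making. First, the degree cap of $2$ in $G-u_1$ does not require Lemma \ref{lm1}(ii): since $u_1$ is adjacent to every other vertex, a vertex $v\ne u_1$ with three neighbours $w_1,w_2,w_3$ in $V(G)\setminus\{u_1\}$ would yield a $K_{2,3}$ subgraph on $\{u_1,v\}\cup\{w_1,w_2,w_3\}$, which no outerplanar graph contains; deriving the cap this way keeps the lemma at its stated hypothesis ($G\supseteq K_{1,n-1}$ plus outerplanarity) and avoids any appearance of circularity, since in the source such eigenvector bounds are typically a tool used on the way to the structural conclusion that $G[N_G(u_1)]$ is a union of paths. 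Within this paper, where both lemmas are quoted as established and Lemma \ref{lm2} is only applied after Lemma \ref{lm1}, your reliance on Lemma \ref{lm1}(ii) is harmless, just logically heavier than necessary. Second, say a word about why $x_{u_1}=1$: either identify $u_1$ with the vertex produced by Lemma \ref{lm1} (it is the unique vertex of degree $n-1$, as two such vertices would again force a $K_{2,3}$), or note directly that a vertex adjacent to all others attains the maximum eigenvector entry, since $\rho(x_{u_1}-x_w)\ge x_w-x_{u_1}$ for every $w$.
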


\begin{definition}[Outerplanar Path Transformation]\cite{X.L. Yin}\label{de1}
Let $s_1$ and $s_2$ be two integers with $s_1\geq s_2\geq1$, and let $H=P_{s_1}\cup P_{s_2}\cup H_0$, where $H_0$ is a disjoint union of paths. We say that $H^*$ is an $(s_1,s_2)$- transformation of $H$ if
$$H^*:=\begin{cases}P_{s_1+1}\cup P_{s_2-1}\cup H_0&\text{if }s_2\geq2,\\P_{s_1+s_2}\cup H_0&\text{if }s_2=1.\end{cases}$$
Clearly, $H^*$ is a disjoint union of paths, which implies that $K_1 \vee H^*$ is an outerplanar graph.
\end{definition}

This transformation strictly increases the spectral radius of \( K_1 \vee H \) for large \( n \).

\begin{lemma}[Spectral Comparison for Outerplanar Transformations]\cite{X.L. Yin}\label{lm3}
  For \( H \) and \( H^* \) as in Definition \ref{de1}, if \( n \geq max\{1.27\times 10^7,6.5025\times2^{s_2+2}\} \), then \( \rho(K_1 \vee H^*) > \rho(K_1 \vee H) \).
\end{lemma}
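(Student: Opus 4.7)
The graphs $K_1 \vee H$ and $K_1 \vee H^*$ share the same shape: a universal vertex $u$ joined to a disjoint union of paths. My plan is to reduce the comparison of their spectral radii to a one-dimensional question by solving the Perron eigenvector explicitly along each path, and then applying a convexity argument. Normalizing a positive Perron eigenvector of $K_1 \vee H$ so that $x_u = 1$ and writing $\rho = 2\cosh\theta$ with $\theta > 0$, on each path component $P_k$ with vertices $v_1, \ldots, v_k$ the eigenequation becomes the non-homogeneous recurrence $x_{i-1} + x_{i+1} - \rho x_i = -1$ with boundary values $x_0 = x_{k+1} = 0$. Solving via particular solution $c := 1/(\rho - 2) = 1/(4\sinh^2(\theta/2))$ and homogeneous modes $e^{\pm\theta}$, and then summing via the identity $\sum_{i=1}^k \cosh((i - (k+1)/2)\theta) = \sinh(k\theta/2)/\sinh(\theta/2)$, I obtain the closed form
\[
S_k(\rho) := \sum_{i=1}^k x_i = c\left(k - \frac{\sinh(k\theta/2)}{\sinh(\theta/2)\cosh((k+1)\theta/2)}\right),
\]
which at $k = 1$ recovers $S_1(\rho) = 1/\rho$, matching the isolated-vertex contribution. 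The remaining eigenequation at $u$ collapses to the scalar identity $\rho = \sum_C S_{|C|}(\rho)$ summed over the path components of $H$, so $\rho(K_1 \vee H)$ is the largest root in $(2,\infty)$ of $F(\rho) := \rho - \sum_C S_{|C|}(\rho)$, and analogously $\rho(K_1 \vee H^*)$ is the largest root of the corresponding $F^*$.

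The crux is strict convexity of the map $k \mapsto S_k(\rho)$. A direct manipulation of the closed form gives
\[
D_k(\rho) := S_{k+1}(\rho) - S_k(\rho) = c\left(1 - \frac{\cosh(\theta/2)}{\cosh((k+1)\theta/2)\,\cosh((k+2)\theta/2)}\right),
\]
which is strictly increasing in $k$ because $\cosh$ is strictly increasing on $[0,\infty)$. A single application of this monotonicity handles both cases of the transformation. If $s_2 \geq 2$, then $s_1 \geq s_2$ gives $D_{s_1}(\rho) > D_{s_2 - 1}(\rho)$, which rearranges to $S_{s_1 + 1}(\rho) + S_{s_2 - 1}(\rho) > S_{s_1}(\rho) + S_{s_2}(\rho)$. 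If $s_2 = 1$, then $s_1 \geq 1$ gives $D_{s_1}(\rho) > D_0(\rho) = 1/\rho = S_1(\rho)$, which rearranges to $S_{s_1 + s_2}(\rho) > S_{s_1}(\rho) + S_{s_2}(\rho)$. Either way $F^*(\rho) < F(\rho)$ for every $\rho > 2$, and in particular $F^*(\rho(K_1 \vee H)) < 0$. Since $F^*(\rho) \to +\infty$ as $\rho \to \infty$, continuity together with the Perron--Frobenius characterization of the spectral radius delivers $\rho(K_1 \vee H^*) > \rho(K_1 \vee H)$.

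The main obstacle I anticipate is not strategic but verificational: deriving and checking the closed form for $x_i$ (in particular verifying positivity so Perron--Frobenius identifies it as the Perron eigenvector), confirming the trigonometric summation identity, and justifying that the scalar equation $\rho = \sum_C S_{|C|}(\rho)$ is indeed equivalent to the Perron eigenvalue equation for $K_1 \vee H$. The role of the hypothesis $n \geq \max\{1.27 \times 10^7,\, 6.5025 \cdot 2^{s_2 + 2}\}$ in this approach is merely to guarantee that $\rho(K_1 \vee H) > 2$ comfortably and to keep us within the regime of Lemma \ref{lm1} needed by the surrounding extremal arguments; the convexity step itself needs only $\rho > 2$. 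A naive alternative using the Perron eigenvector of $K_1 \vee H$ as a Rayleigh test vector for $K_1 \vee H^*$ fails because the swapped edge can involve an interior vertex of $P_{s_2}$ whose eigenvector coordinate exceeds that of the endpoint of $P_{s_1}$, so the scalar-equation route appears to be the cleanest path, and the exponential dependence on $s_2$ in the hypothesis is most likely inherited from a different, Rayleigh-based proof that must control such error terms.
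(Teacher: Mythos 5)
Your argument is correct, but it is not the one behind the paper: the paper does not prove Lemma \ref{lm3} at all, it imports it from \cite{X.L. Yin}, where (as the companion eigenvector bound in Lemma \ref{lm2} and the constant $6.5025\times 2^{s_2+2}$ indicate) the proof is a local Rayleigh-quotient perturbation in the style of Fang--Lin--Shi: one takes the Perron vector of $K_1\vee H$, rewires an edge, and controls the change in $\sum_{uv}x_ux_v$ using entrywise estimates along the paths, which is exactly where the exponential-in-$s_2$ threshold on $n$ comes from. Your route is genuinely different and, in fact, stronger. Writing the eigenequation on each path component as $(\rho I-A(P_k))x_{P_k}=\mathbf 1$, your closed form for $S_k(\rho)=\mathbf 1^{T}(\rho I-A(P_k))^{-1}\mathbf 1$ checks out (I verified the summation identity and the values at $k=1,2$), your difference formula
\[
D_k(\rho)=c\left(1-\frac{\cosh(\theta/2)}{\cosh\bigl((k+1)\theta/2\bigr)\cosh\bigl((k+2)\theta/2\bigr)}\right)
\]
is correct, and its strict monotonicity in $k$ handles both cases of the transformation (for $s_2=1$ via $D_0=1/\rho=S_1$). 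The remaining steps you flag as verificational are indeed routine and should be stated: $x_{P_k}=(\rho I-A(P_k))^{-1}\mathbf 1>0$ by the Neumann series for $\rho>2>\rho(P_k)$, so any root of $F$ (resp.\ $F^{*}$) in $(2,\infty)$ carries a positive eigenvector and hence equals the spectral radius by Perron--Frobenius; moreover $F'(\rho)=1+\sum_C\mathbf 1^{T}(\rho I-A)^{-2}\mathbf 1>0$, so the root in $(2,\infty)$ is unique, which makes your ``largest root'' phrasing precise and closes the final intermediate-value step. The payoff of your approach is that it needs only $\rho(K_1\vee H)>2$ (guaranteed already by $K_{1,n-1}\subseteq K_1\vee H$ for $n\ge 6$), so the hypothesis $n\ge\max\{1.27\times 10^{7},\,6.5025\times 2^{s_2+2}\}$ is revealed to be an artifact of the perturbative proof rather than intrinsic to the statement; the cited approach, on the other hand, is shorter to set up and generalizes with little change to the $K_2\vee H$ planar setting of Lemma \ref{lm6}, where your exact solvability argument would need the analogous scalar equation for a two-vertex dominating set.
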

\vspace*{1mm}
\begin{proof}[\textbf{Proof of Theorem \ref{thm1}}]

Let $G$ be the extremal graph with $\textit{spex}_{\mathcal{OP}}(n,P_{t\cdot l})$.
Noting that $F\in\{P_{t\cdot l}|t = 1, l \geq 6\}$ or $F\in\{P_{t\cdot l}|t \geq 2, l \geq 3\}$ is a subgraph of $K_{1} \vee P_{n-1}$ and is not of $K_{1} \vee ((t-1)K_2\cup(n-2t+1)K_1)$ for large enough $n$,
then $G\cong K_{1} \vee G[A]$, where $G[A] \cong G[N_G(u)]$ is a disjoint union of paths by Lemma \ref{lm1}.
Suppose that $G[A]=\cup_{i=1}^qP_{n_i}$, where $q\geq2$ and $n_1\geq n_2\geq\cdots\geq n_q$.
Let $H$ be a disjoint union of $q$ paths. We use $n_i(H)$ to denote the order of the $i$-th longest path of $H$ for any $i\in\{1,...,q\}.$ 

(i) When $t=1$, we already know that the outerplanar spectral extremal graph without subgraph $P_l$ for $l \leq 5$ by Remark \ref{rk1}. So, we focus on the case $l\geq 6$.

\begin{claimmm}\label{claim3.1}  
If $H$ is a disjoint union of paths, then $K_1 \vee H$ is $P_{l}$-free if and only if $n_1(H) +n_2(H) \leq l-2$.  
\end{claimmm}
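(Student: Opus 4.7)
The plan is to prove both directions by a case analysis on where the apex vertex $u$ of the $K_1$ summand of $K_1\vee H$ can sit inside a hypothetical copy of $P_l$. Recall that $u$ is adjacent to every vertex of $H$, while $H$ itself is a disjoint union of paths, so any path of $K_1\vee H$ is obtained by taking a linear forest of $H$ and optionally ``stitching'' two of its pieces together through $u$.

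For the backward direction I would assume $n_1(H)+n_2(H)\leq l-2$, take an arbitrary path $P$ in $K_1\vee H$, and split into three cases: $u\notin V(P)$, $u$ is an endpoint of $P$, or $u$ is an internal vertex of $P$. In the first two cases, $V(P)\setminus\{u\}$ lies inside a single path component of $H$, so $|V(P)|\leq 1+n_1(H)\leq l-1<l+1$, and one checks that $|V(P)|\leq l-1$ in fact. In the third case, deleting $u$ breaks $P$ into two disjoint subpaths $Q_1,Q_2$ of $H$ with $|V(Q_1)|+|V(Q_2)|=|V(P)|-1$; either $Q_1,Q_2$ lie in a common component $P_{n_i}$, in which case $|V(Q_1)|+|V(Q_2)|\leq n_i\leq n_1(H)$, or they lie in distinct components, in which case $|V(Q_1)|+|V(Q_2)|\leq n_1(H)+n_2(H)$. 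Either bound together with the hypothesis gives $|V(P)|\leq l-1$, so no $P_l$ exists.

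For the forward direction I would prove the contrapositive: from $n_1(H)+n_2(H)\geq l-1$ I construct a concrete $P_l$. Writing $a=n_1(H)$ and $b=n_2(H)$, set $a'=\min\{l-1,a\}$ and $b'=l-1-a'$; the assumption $a+b\geq l-1$ guarantees $b'\leq b$. Take an endpoint-rooted subpath $Q_1$ of $P_{n_1}$ on $a'$ vertices, an endpoint-rooted subpath $Q_2$ of $P_{n_2}$ on $b'$ vertices, and insert $u$ between their free endpoints (legal because $u$ is universal in $K_1\vee H$). The resulting walk is a path on exactly $a'+1+b'=l$ vertices, so $P_l\subseteq K_1\vee H$. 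When $a\geq l-1$ this degenerates to attaching $u$ as an endpoint of an $(l-1)$-vertex subpath of $P_{n_1}$.

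The only delicate point is the subcase of the backward direction in which $Q_1$ and $Q_2$ lie in the same component $P_{n_i}$: the hypothesis $n_1(H)+n_2(H)\leq l-2$ is not directly useful there, and one must instead observe the finer fact that two vertex-disjoint subpaths of a single path $P_{n_i}$ together contain at most $n_i\leq n_1(H)$ vertices. Once this observation is recorded, everything else reduces to routine length counting.
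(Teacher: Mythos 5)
Your proposal is correct and follows essentially the same route as the paper: the paper simply records as an observation that the longest path of $K_1 \vee H$ has exactly $n_1(H)+n_2(H)+1$ vertices (obtained by stitching the two longest components of $H$ through the apex, with all shorter lengths also realized), which is precisely what your case analysis on the position of $u$ and your explicit construction verify in detail. Your treatment is just a more careful write-up of the same idea, including the same-component subcase that the paper leaves implicit.
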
  
  
\begin{proof}  
One can observe that the length of the longest path in $K_1 \vee H$ is $n_1(H) +n_2(H) + 1$, and $K_1 \vee (P_{n_1(H)} \cup P_{n_2(H)})$ contains a path $P_i$ for each $i \in \{3, \ldots, n_1(H) +n_2(H) + 1\}$.  
Thus, $n_1(H) +n_2(H) + 1 \leq l - 1$ if and only if $K_1 \vee H$ is $P_l$-free,  
which the claim proves.  
\end{proof}  

By Claim \ref{claim3.1} and direct computation, we have $n_2 \leq n_1 \leq l - 2$ in $G[A]$, and so $6.5025 \times 2^l \geq 6.5025 \times 2^{n_2 + 2}$. Thus,  
\begin{equation}\label{gongshi1}  
n \geq \max\{1.27 \times 10^7, [(64 + |V(F)|)^{1/2} + 8]^2, 6.5025 \times 2^{l}\}.  
\end{equation}  
  
\begin{claimmm}\label{claim3.2}  
$n_1 +n_2 = l - 2$.  
\end{claimmm}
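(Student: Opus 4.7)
The plan is a contradiction argument based on the outerplanar path transformation of Definition \ref{de1}. By Claim \ref{claim3.1}, the $P_l$-freeness of $G$ already forces $n_1 + n_2 \leq l - 2$. I would suppose for contradiction that $n_1 + n_2 \leq l - 3$ and produce a linear forest $H^*$ such that $K_1 \vee H^*$ is outerplanar, $P_l$-free, and satisfies $\rho(K_1 \vee H^*) > \rho(G)$, contradicting the extremality of $G$.

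For the construction, since $q \geq 2$, apply the $(n_1, n_q)$-transformation to $G[A] = P_{n_1} \cup P_{n_q} \cup H_0$, where $H_0 = \cup_{i=2}^{q-1} P_{n_i}$ (empty if $q = 2$). This yields $H^* = P_{n_1+1} \cup P_{n_q-1} \cup H_0$ when $n_q \geq 2$, and $H^* = P_{n_1+1} \cup H_0$ when $n_q = 1$. In either case, the longest component of $H^*$ has order $n_1 + 1$, while the second-longest has order at most $n_2$, since only the smallest component of $G[A]$ has been shortened or merged away. Consequently $n_1(H^*) + n_2(H^*) \leq n_1 + 1 + n_2 \leq l - 2$, so Claim \ref{claim3.1} certifies that $K_1 \vee H^*$ is $P_l$-free; outerplanarity of $K_1 \vee H^*$ is built into Definition \ref{de1}.

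To close the argument, I would invoke Lemma \ref{lm3}, whose hypothesis reads $n \geq \max\{1.27 \times 10^7,\, 6.5025 \times 2^{n_q + 2}\}$. Since $n_q \leq n_1 \leq l - 2$ by Claim \ref{claim3.1}, this is guaranteed by the standing bound (\ref{gongshi1}) on $n$. The lemma then delivers $\rho(K_1 \vee H^*) > \rho(K_1 \vee G[A]) = \rho(G)$, the required contradiction. Combined with the upper bound $n_1 + n_2 \leq l - 2$, this forces $n_1 + n_2 = l - 2$.

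The argument is essentially a direct application of the machinery already prepared in Lemmas \ref{lm1}--\ref{lm3}; the only real care needed is the path-length bookkeeping to verify $n_1(H^*) = n_1 + 1$ and $n_2(H^*) \leq n_2$ in the two sub-cases $n_q \geq 2$ and $n_q = 1$. I expect no deeper obstacle: the transformation is engineered precisely so that the bound in Claim \ref{claim3.1} can only increase by at most $1$ after one application, which is exactly the slack provided by the assumption $n_1 + n_2 \leq l - 3$.
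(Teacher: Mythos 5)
Your proposal is correct and follows essentially the same route as the paper: assume $n_1+n_2\leq l-3$, apply the $(n_1,n_q)$-transformation to $G[A]$, use Claim \ref{claim3.1} to certify that the new graph is still $P_l$-free, and invoke (\ref{gongshi1}) together with Lemma \ref{lm3} to contradict the extremality of $G$. Your extra bookkeeping of the sub-cases $n_q\geq 2$ and $n_q=1$ is a slight refinement of the paper's terser verification, but the argument is the same.
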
  
  
\begin{proof}  
By Claim \ref{claim3.1}, we already know that $n_1+n_2 \leq l-2 $. Now suppose to the contrary that $n_1+n_2\leq\ l-3.$ Let $H^{\prime}$ be an $(n_1,n_t)$-transformation of $G[A].$ Clearly, $n_1(H^{\prime})=n_1+1$ and $n_2(H^{\prime})=n_2.$ Then, $n_1(H^{\prime})+n_2(H^{\prime})\leq l-2.$ By Claim \ref{claim3.1}, $K_1 \vee H^{\prime}$ is $P_l$-free. However, by (\ref{gongshi1}) and Lemma \ref{lm3}, we have $\rho(K_1 \vee H^{\prime})>\rho$, contradicting the maximality of $\rho = \rho(G)$. Hence, the claim holds 
\end{proof}  

\begin{claimmm}\label{claim3.3}  
$n_i = \lfloor \frac{l-2}{2} \rfloor$ for $i \in \{2, \ldots, q - 1\}$.  
\end{claimmm}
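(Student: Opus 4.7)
The plan is to argue by contradiction, combining the $(s_1, s_2)$-transformation of Definition~\ref{de1} and the spectral comparison of Lemma~\ref{lm3} with the structural facts from Claims~\ref{claim3.1} and~\ref{claim3.2}.

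First I note that $n_1 + n_2 = l - 2$ (Claim~\ref{claim3.2}) together with $n_1 \geq n_2$ gives $n_2 \leq \lfloor \frac{l-2}{2} \rfloor$, and then the non-increasing ordering yields $n_j \leq n_2 \leq \lfloor \frac{l-2}{2} \rfloor$ for every $j \in \{2, \ldots, q-1\}$. Thus it suffices to rule out the possibility $n_j < \lfloor \frac{l-2}{2} \rfloor$, equivalently $n_j + 1 \leq \lfloor \frac{l-2}{2} \rfloor$, for any such $j$.

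Given such a $j$, the natural move is to apply the $(n_j, n_q)$-transformation of Definition~\ref{de1}, producing $H^*$ in which $P_{n_j} \cup P_{n_q}$ becomes $P_{n_j + 1} \cup P_{n_q - 1}$ (or $P_{n_j + 1}$ if $n_q = 1$). The graph $K_1 \vee H^*$ remains outerplanar, and by Claim~\ref{claim3.1} it is $P_l$-free iff $n_1(H^*) + n_2(H^*) \leq l - 2$. Since the other path components are untouched, $n_1(H^*) = n_1$ and $n_2(H^*) = \max\{n_2, n_j + 1\}$. When $n_j + 1 \leq n_2$, the top-two sum stays at $l - 2$ and $K_1 \vee H^*$ is $P_l$-free; Lemma~\ref{lm3} then applies (its hypothesis $n \geq 6.5025 \cdot 2^{n_q + 2}$ is supplied by $n_q \leq n_j \leq \lfloor \frac{l-2}{2} \rfloor$ together with (\ref{gongshi1})) and yields $\rho(K_1 \vee H^*) > \rho(G)$, contradicting the extremality of $G$.

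The step I expect to be the main obstacle is the complementary subcase $n_j + 1 > n_2$, which forces $n_j = n_2$ and hence $n_2 < \lfloor \frac{l-2}{2} \rfloor$, so that $n_1 \geq n_2 + 2$. In this regime every single-step $(s_1, s_2)$-transformation on $G[A]$ either pushes the top-two sum to $l - 1$ (breaking $P_l$-freeness via Claim~\ref{claim3.1}) or is the reverse of a spectrum-increasing move (and so decreases $\rho$), so no local operation remains inside the $P_l$-free family while improving $\rho$. The plan here is to compare $G$ directly against the balanced candidate $K_1 \vee H_{\mathcal{OP}}(\lceil \frac{l-2}{2} \rceil, \lfloor \frac{l-2}{2} \rfloor)$: using the slack $n_1 - n_2 \geq 2$ together with the resulting surplus of short paths in $G[A]$, one bounds the Rayleigh quotient of the balanced candidate from below via the eigenvector estimates of Lemma~\ref{lm2}, and $\rho(G)$ from above, producing the strict inequality needed for the contradiction. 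This direct spectral comparison is the technically delicate crux of the argument.
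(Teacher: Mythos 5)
Your first step is sound and coincides with the paper's: if some $n_j\leq n_2-1$ with $2\leq j\leq q-1$, an $(n_j,n_q)$-transformation keeps $n_1(H^*)+n_2(H^*)=l-2$, so $K_1\vee H^*$ is $P_l$-free by Claim \ref{claim3.1}, and Lemma \ref{lm3} (whose hypothesis follows from (\ref{gongshi1})) gives $\rho(K_1\vee H^*)>\rho(G)$, a contradiction. The genuine gap is precisely the part you label ``the technically delicate crux'': the remaining case where all middle paths equal $n_2$ but $n_2<\lfloor\frac{l-2}{2}\rfloor$, i.e.\ $n_1\geq n_2+2$. There you only announce a plan --- lower-bound the Rayleigh quotient of the balanced candidate and upper-bound $\rho(G)$ --- without any mechanism that yields the strict inequality. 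No available upper bound on $\rho(G)$ (e.g.\ the outerplanar bound $\frac32+\sqrt{n-\frac74}$) is remotely sharp enough to separate two graphs of the form $K_1\vee H$ whose spectral radii differ only in lower-order terms, and Lemma \ref{lm2} controls eigenvector entries of $G$ itself, not the spectral radius of a different graph; as stated, your comparison does not close the argument.

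What the paper actually does in this case is an explicit local surgery analyzed with the Perron vector $X$ of $G$ used as a test vector on the modified graph. Since $q$ is large, $G[A]$ contains at least $n_2+2$ components of order $n_2$. One deletes the end edge $w'w''$ of the longest path $P^1$ together with all edges of one $P_{n_2}$-component, and re-attaches $w''$ and the $n_2$ freed vertices so that $n_2+1$ of the remaining $P_{n_2}$'s each gain one vertex. The new graph is $K_1\vee\bigl(P_{n_1-1}\cup(n_2+1)P_{n_2+1}\cup\cdots\bigr)$: it is still $P_l$-free by Claim \ref{claim3.1} because the two longest components now sum to at most $(n_1-1)+(n_2+1)=l-2$, and it has exactly one more edge than $G$ ($n_2$ deleted, $n_2+1$ added). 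Since every $u\in A$ satisfies $\frac1\rho\leq x_u\leq\frac1\rho+\frac{2.04}{\rho^2}$ by Lemma \ref{lm2}, the Rayleigh-quotient change is at least $\frac{2}{X^TX}\bigl(\frac{1}{\rho^2}-\frac{5.08\,n_2}{\rho^3}\bigr)>0$ once $n\geq(5.08\lfloor\frac{l-2}{2}\rfloor)^2+1$ and $\rho\geq\sqrt{n-1}$, contradicting extremality. Some equally concrete argument --- an explicit edge count after a specified rearrangement, combined with the eigenvector estimates --- is what your proposal is missing; without it the claim $n_2=\lfloor\frac{l-2}{2}\rfloor$ is not established.
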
  
  
\begin{proof}  
Suppose to the contrary, then set $i_0= \min\{i|3\leq i\leq q-1,n_i\leq n_2-1\}.$ Let $H^{\prime\prime}$ be an $(n_{i_0},n_q)$-transformation of $G[A].$ Clearly, $n_1(H^{\prime\prime})=n_1$ and $n_2(H^{\prime\prime})=\max\{n_2,n_{i_0}+1\}=n_2$, and so $n_1(H^{\prime\prime})+n_2(H^{\prime\prime})=l-2.$ By Claim \ref{claim3.3}, $K_2 \vee H^{\prime\prime}$ is $P_l$-free. However, by (\ref{gongshi1}) and Lemma \ref{lm3},  $\rho(K_1 \vee H^{\prime\prime})>\rho$, contradicting the maximality of $\rho = \rho(G)$. It follows that $G=K_1 \vee (P_{n_1}\cup(t-2)P_{n_2}\cup P_{n_t}).$
Finally, we claim that $n_2=\left\lfloor\frac{l-2}2\right\rfloor.$ Note that $1\leq n_2\leq\left\lfloor\frac{l-2}2\right\rfloor.$ Then, $n_2=\left\lfloor\frac{l-2}2\right\rfloor$ for $l\in\{4,5\}.$
It remains the case $l \geq 6.$
Suppose to the contrary, then $n_2\leq\left\lfloor\frac{l-4}2\right\rfloor$ as $n_2<\left\lfloor\frac{l-2}{2}\right\rfloor.$ Using $P^i$ to denote the $i$-th longest path of $G[A]$ for $i\in\{1,...,q\}.$
Note that $n-1 = \sum_{i=1}^{q} n_i \leq n_1 + (q-1)n_2$, it implies that $q \geq n_2 + 4$ by $n \geq 6.5025 \times 2^{l-2+2} \gg (l-2)+2{\lfloor\frac{l-2}{2}\rfloor}+{\lfloor\frac{l-2}{2}\rfloor}^2+ 1$.
Then $P^2,P^3,...,P^{n_2+3}$ are paths of order $n_2.$ We may assume that $P^{n_2+3}=w_1w_2\cdots w_{n_2}.$
Since $n_1=l-2-n_2\geq\left\lceil\frac{l-2}2\right\rceil \geq 2$, there exists an endpoint $w^{\prime\prime}$ of $P^1$ with $w^\prime w^{\prime\prime}\in E\left(P^1\right).$ Let $G^\prime$ be obtained from $G$ by 

(1) deleting $w^\prime w^{\prime\prime}$ and joining $w^{\prime\prime}$ to an endpoint of $P^{n_2+2};$

(2) deleting all edges of $P^{n_2+3}$ and joining $w_i$ to an endpoint of $P^{i+1}$ for each $i\in\{1,...,n_2\}.$ Clearly, $G^\prime$ is obtained from $G$ by deleting $n_2$ edges and adding $n_2+1$ edges. By Lemma \ref{lm2}, we obtain
$$\frac1{\rho^2}\leq x_{u_i}x_{u_j}\leq\frac1{\rho^2}+\frac{4.08}{\rho^3}+\frac{4.1616}{\rho^4}<\frac1{\rho^2}+\frac{5.08}{\rho^3}$$
for any vertices $u_i,u_j\in A.$ Then

$$\rho(G^{\prime})-\rho\geq\frac{X^T(A(G^{\prime})-A(G))X}{X^TX}>\frac2{X^TX}\biggl(\frac{(n_2+1)}{\rho^2}-\frac{n_2}{\rho^2}-\frac{5.08n_2}{\rho^3}\biggr)>0,$$
where $n_2<\left\lfloor\frac{l-2}2\right\rfloor\leq\frac1{5.08}(\sqrt{n-1})\leq\frac1{5.08}\rho$ as $n \geq (5.08\left\lfloor\frac{l-2}2\right\rfloor)^2+1$ and $\rho \geq \rho(K_{1,n-1})=\sqrt{n-1}$ (since $K_{1,n-1}$ is a $P_l$-free outerplanar graph for $l \geq 4$). So, $\rho(G^{\prime})>\rho.$
Meanwhile, $G^\prime\cong K_1 \vee (P_{n_1-1}\cup(n_2+1)P_{n_2+1}\cup(t-n_2-4)P_{n_2}\cup P_{n_t}).$ By Claim \ref{claim3.1}, $G^\prime$ is $P_{l}$-free, contradicting the fact that $G$ is extremal to $spex_{OP}(n,P_l).$ So, the claim holds.
\end{proof}

Since $n_1 + n_2 = l-2$ and $n_2=\lfloor \frac{l-2}{2} \rfloor$, we have $n_1=\lceil \frac{l-2}{2}\rceil$. Moreover, since $n_i=\lfloor \frac{l-2}{2} \rfloor$ for $i \in \{2, \ldots, q - 1\}$ and $n_q \leq n_2$, we can see that $G[A] \cong H_{\mathcal{OP}}(\lceil \frac{l-2}{2} \rceil, \lfloor\frac{l-2}{2}\rfloor)$. This completes the proof of Theorem \ref{thm1} (i).

(ii) If $t = 2$ and $l\geq 3$, then $P_{2\cdot l} \cong P_{2l-1}$. Clearly, by Theorem \ref{thm1} (i), we can directly conclude that $G \cong K_1 \vee H_{\mathcal{OP}}(\lceil l-\frac{3}{2} \rceil,\lfloor l-\frac{3}{2} \rfloor)$, which completes the proof of Theorem \ref{thm1} (ii).

(iii) If $t \geq 4$ and $l\geq 3$, it is not difficult to know that a \(P_{t\cdot l}\)-free graph is also \(B_{tl}\)-free. Meanwhile, as we know from \cite{X.L. Yin}, the spectral extremal \(B_{tl}\)-free outerplanar graph that is $K_1 \vee H_{\mathcal{OP}}(tl-l-1,l-2)$, and $K_1 \vee H_{\mathcal{OP}}(tl-l-1,l-2)$ happens to be \(P_{t\cdot l}\)-free as well. Therefore, it is proven that the extremal graph of \(P_{t\cdot l}\)-free is $K_1 \vee H_{\mathcal{OP}}(tl-l-1,l-2)$ for $t \geq 4$ and $l\geq 3$.
\end{proof}

\vspace*{1mm}
Before our detailed proof of Theorem \ref{thm2}, we need to introduce an upper bound of the spectral radius of outerplanar graphs as following.

\begin{lemma}\cite{J. Shu}.\label{lm7}
Let $G$ be a connected outerplanar graph on $n\geq3$ vertices. Then $\rho\left(G\right)\leq\frac32+\sqrt{n-\frac74}$.
\end{lemma}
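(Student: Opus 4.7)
The plan is to exploit two structural consequences of outerplanarity: the edge-count bound $|E(G)|\leq 2n-3$, and the exclusion of $K_{2,3}$ as a minor, which forces $|N(u)\cap N(v)|\leq 2$ for any two distinct vertices $u,v\in V(G)$. In spectral terms this gives $(A^2)_{uu}=d(u)$ and $(A^2)_{uv}\leq 2$ whenever $u\neq v$; together with $|N(u)\cup N(v)|\leq n$, it also yields the edge-degree inequality $d(u)+d(v)\leq n+2$ for every edge $uv\in E(G)$.

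I would begin by reducing to the case that $G$ is a \emph{maximal} outerplanar graph: by Perron--Frobenius, adjoining any new edge that preserves outerplanarity strictly increases $\rho(G)$, so we may assume $|E(G)|=2n-3$. Let $\boldsymbol{x}=(x_1,\ldots,x_n)^{\mathrm T}$ be the positive Perron eigenvector of $A(G)$, normalized so that $\max_{v}x_v=1$, attained at some vertex $v^\ast$. Applying the identity $\rho^{2}x_{v^\ast}=\sum_{w}(A^2)_{v^\ast w}x_w$, splitting the sum according to whether $w=v^\ast$, $w\in N(v^\ast)$, or $w\notin N[v^\ast]$, and using the eigenequation $\sum_{w\sim v^\ast}x_w=\rho$ together with the immediate inequality $d(v^\ast)\geq\rho$, the goal is to reach the key estimate
\[
\rho^{2}-3\rho+4\leq n,
\]
which, upon solving the resulting quadratic in $\rho$, gives precisely $\rho\leq \frac{3}{2}+\sqrt{n-\frac{7}{4}}$.

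The main obstacle will be sharpening the coefficient of $\rho$ from the crude value $1$ (which follows from the uniform bound $x_w\leq 1$ and only delivers $\rho^{2}-\rho\leq 2n-2$) up to the required value $3$. This improvement should come from combining the outerplanar edge inequality $d(u)+d(v)\leq n+2$ with the fact that $G[N(v^\ast)]$ is a disjoint union of paths (a structural consequence of outerplanarity analogous to Lemma \ref{lm1}(ii) when $v^\ast$ has large degree), so that $|E(G[N(v^\ast)])|\leq d(v^\ast)-1$ and hence at most $2(d(v^\ast)-1)$ walks of length two from $v^\ast$ close inside $N(v^\ast)$. Averaging this refined count against the eigenvector weights on $N(v^\ast)$ via $\sum_{w\sim v^\ast}x_w=\rho$, instead of bounding each contribution by its coordinate maximum, should produce the correct linear coefficient and close the argument.
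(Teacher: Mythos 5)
The paper does not prove this lemma at all: it is imported verbatim from Shu and Hong \cite{J. Shu}, so your argument has to stand entirely on its own, and as written it does not. Carrying out the computation you describe at the maximal-entry vertex $v^\ast$ (split $\rho^2 x_{v^\ast}=\sum_{v\sim v^\ast}\sum_{w\sim v}x_w$ into the terms $w=v^\ast$, $w\in N(v^\ast)$, $w\notin N[v^\ast]$, use that $G[N(v^\ast)]$ is a linear forest so the middle term is $\sum_{w\in N(v^\ast)}\deg_{G[N(v^\ast)]}(w)x_w\le 2\rho$, use $|N(w)\cap N(v^\ast)|\le 2$ and $x_w\le 1$ for the outer term, and $d(v^\ast)\ge\rho$) gives exactly $\rho^2\le \rho+2n-2$, i.e.\ $\rho\le \frac12+\sqrt{2n-\frac74}$. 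To reach $\frac32+\sqrt{n-\frac74}$ you must replace the coefficient $2$ of $n$ by $1$, i.e.\ show that the walks of length two ending outside $N[v^\ast]$ contribute at most $(n-1-d(v^\ast))+(\rho-3)$ rather than $2(n-1-d(v^\ast))$; equivalently, that the total eigenvector mass on vertices at distance two from $v^\ast$ having \emph{two} common neighbours with $v^\ast$ is at most $\rho-3$. Your sketch never touches this term: the ingredients you invoke — $d(u)+d(v)\le n+2$ for edges, $e(G[N(v^\ast)])\le d(v^\ast)-1$, and ``averaging against $\sum_{w\sim v^\ast}x_w=\rho$'' — all concern the walks that close \emph{inside} $N(v^\ast)$, which are already fully exploited in the crude bound. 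Indeed you misdiagnose the obstacle as the coefficient of $\rho$ (``from $1$ up to $3$''); with a $2n$ term present no improvement of the linear coefficient changes the $\sqrt{2n}$ asymptotics, so the actual difficulty is the $n$-coefficient.

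This missing step is not a routine averaging. There can be $\Theta(n)$ vertices outside $N[v^\ast]$ each with exactly two common neighbours with $v^\ast$ (triangulate the polygon $u v_1 w_1 v_2 w_2\cdots v_k$ so that $u\sim v_i$, $v_i\sim v_{i+1}$ and $w_i\sim v_i,v_{i+1}$: every $w_i$ has multiplicity two), so the pointwise bound $x_w\le1$ cannot help, and controlling their total weight requires genuinely new structural input (for instance, the absence of a $K_{2,3}$-subdivision shows every vertex has at most two neighbours among these multiplicity-two vertices, but even feeding that back through the eigenequation yields a constant worse than $\frac32$ and $-\frac74$). So the proposal stops precisely where the real work of the cited Shu–Hong proof begins; as it stands it only establishes the weaker bound $\frac12+\sqrt{2n-\frac74}$. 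The preliminary reductions you make (passing to a maximal outerplanar graph, the bound $|N(u)\cap N(v)|\le 2$, and the linear-forest structure of $G[N(v^\ast)]$, which holds for every vertex, not only one of large degree) are correct but do not close this gap.
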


\vspace*{1mm}
\begin{proof}[\textbf{Proof of Theorem \ref{thm2}}]

Let $G$ be the extremal graph with $\textit{spex}_{\mathcal{OP}}(n,tP_{l})$. We consider the following two cases.
\begin{case}{$G$ is a connected graph.}\label{case3.1}

Noting that $F\in\{tP_{l}| t = 1 , l \geq 4\}$ or $F\in\{tP_{l}|t \geq 2, l \geq 3\}$ is a subgraph of $K_{1} \vee P_{n-1}$ but not of $K_{1} \vee ((t-1)K_2\cup(n-2t+1)K_1)$ for large enough $n$,
then $G\cong K_{1} \vee G[A]$ where $G[A] \cong G[N_G(u)]$ is a disjoint union of paths by Lemma \ref{lm1}. Suppose that $G[A]=\cup_{i=1}^qP_{n_i}$ with $q\geq2$ and $n_1\geq n_2\geq\cdots\geq n_q$.
Let $H$ be a disjoint union of $q$ paths. We use $n_i(H)$ to denote the order of the $i$-th longest path of $H$ for any $i\in\{1,...,q\}.$ 

\begin{claimmmm}\label{claim3.4}
If $H$ is a union of disjoint paths and $K_1 \vee H$ is $tP_{l}$-free,
then $n_1(H)+n_2(H) \leq (t-2)l+l-1+l-1$.
\end{claimmmm}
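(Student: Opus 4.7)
The plan is to prove the contrapositive: if $n_{1}(H)+n_{2}(H)\geq tl-1$, then already the subgraph $K_{1}\vee(P_{n_{1}(H)}\cup P_{n_{2}(H)})$ of $K_{1}\vee H$ contains $t$ vertex-disjoint copies of $P_{l}$, contradicting $tP_{l}$-freeness. Therefore I need only work with the two longest path components of $H$ together with the apex vertex $u$; no other component of $H$ plays a role.

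Writing $n_{i}(H)=a_{i}l+r_{i}$ with $0\leq r_{i}\leq l-1$ for $i\in\{1,2\}$, I first observe that $r_{1}+r_{2}\leq 2(l-1)$, so the hypothesis $(a_{1}+a_{2})l+r_{1}+r_{2}\geq tl-1$ already forces $a_{1}+a_{2}\geq t-1$; otherwise one would have $n_{1}(H)+n_{2}(H)\leq (t-2)l+2(l-1)=tl-2$. If $a_{1}+a_{2}\geq t$, then cutting each $P_{n_{i}(H)}$ into consecutive blocks of length $l$ from one end already yields $t$ vertex-disjoint $P_{l}$'s entirely inside $H$, and there is nothing more to do.

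In the remaining case $a_{1}+a_{2}=t-1$ we also have $r_{1}+r_{2}\geq l-1$. I take $a_{1}$ consecutive blocks of length $l$ from one end of $P_{n_{1}(H)}$ and $a_{2}$ such blocks from one end of $P_{n_{2}(H)}$; the leftover at the other end of $P_{n_{i}(H)}$ is a subpath of order $r_{i}$. Because $r_{1}+r_{2}\geq l-1$ and $r_{i}\leq l-1$, I can choose nonnegative integers $c_{1}\leq r_{1}$ and $c_{2}\leq r_{2}$ with $c_{1}+c_{2}=l-1$. Since $u$ is adjacent to every vertex of $H$ in $K_{1}\vee H$, concatenating a subpath of order $c_{1}$ taken from the first leftover, the apex $u$, and a subpath of order $c_{2}$ taken from the second leftover, produces a genuine $P_{l}$; this $P_{l}$ is vertex-disjoint from the previously constructed $t-1$ blocks by construction, giving the required $tP_{l}$.

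The argument is purely combinatorial and uses neither spectral information nor outerplanarity; the only routine step is to justify the existence of the split $(c_{1},c_{2})$, which is immediate from the two bounds $r_{1}+r_{2}\geq l-1$ and $r_{i}\leq l-1$. I therefore foresee no substantial obstacle beyond this bookkeeping, and the proof is parallel in structure to the $t=1$ analogue already carried out in Claim~\ref{claim3.1}.
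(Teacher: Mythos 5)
Your proposal is correct and follows essentially the same route as the paper: divide the two longest paths into $\lfloor n_i(H)/l\rfloor$ blocks of length $l$, observe the block counts must sum to at least $t-1$ (else $n_1+n_2\le tl-2$), and in the tight case use the leftovers of total order at least $l-1$ together with the apex vertex to form a $t$-th disjoint $P_l$. The only difference is that you spell out the split $c_1+c_2=l-1$ explicitly, a detail the paper leaves implicit in its final sentence.
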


\begin{proof}  
Suppose to the contrary that $n_1(H) + n_2(H) \geq (t - 2)l +(l - 1)+ l $.  
Let $a$ and $b$ be the maximum number of independent $l$-paths in $P_{n_1(H)}$ and $P_{n_2(H)}$, respectively, i.e.,  
\[  
a = \left\lfloor \frac{n_1(H)}{l} \right\rfloor \quad \text{and} \quad b = \left\lfloor \frac{n_2(H)}{l} \right\rfloor.  
\]  
This implies that $n_1(H) - al \leq l - 1$ and $n_2(H) - bl \leq l - 1$.
Next, we assert that $a + b = t-1$. Clearly, if $a + b > t-1 $, then $G$ would contain a $tP_{l}$, which is a contradiction.  
If $a + b < t - 1$, recall that $n_1(H) + n_2(H) \geq (t - 2)l +(l - 1)+ l$, then 
\[  
n_1(H) + n_2(H) - al - bl \geq 2l - 1.  
\]  
On the other hand, since $n_1(H) - al \leq l - 1$ and $n_2(H) - bl \leq l - 1$, we have  
\[  
n_1(H) + n_2(H) - al - bl \leq 2l-2 < 2l - 1,  
\]
which is a contradiction. Thus, $a + b = t - 1$. Therefore, $G[P_{n_1(H)} \cup P_{n_2(H)} \cup \{u\}]$ would generate a $tP_{l}$ due to $n_1(H) + n_2(H) \geq (t - 2)l +(l - 1)+ l $, a contradiction.   
\end{proof}

Let $H^* = H_{\mathcal{OP}}((t-2)l+l-1, l-1)$. Clearly, $K_1 \vee H^* = K_1 \vee H_{\mathcal{OP}}((t-2)l+l-1, l-1)$, which means that $n_1 = (t-2)l+l-1$ and $n_2 = l-1$.  
If $G[A] \cong H^*$, then $G \cong K_1 \vee H_{\mathcal{OP}}(tl-l-1, l-1)$, and we are done.  
If $G[A] \ncong H^*$, by Lemma \ref{lm1} and Claim \ref{claim3.4}, then $G[A]$ is a disjoint union of some paths and $n_1+n_2\leq (t-2)l+l-1+l-1$.
Now, consider the quantity $r$ which represents the number of independent $l$-paths in $P_{n_1}$. 
Firstly, we can get $r\leq t-2$ for $l=3$ and $r \leq t-1$ for $l \geq 4$. Otherwise, there would be a $tP_l$ in $G$, a contradiction.
Next, we will follow the following cases.
  
\begin{subcase}\label{subcase2.1}  
$r \leq t-2$ and $l \geq 3$. We can obtain $H^*$ by applying a series of $(s_1, s_2)$-transformations to $G[A]$, where $s_2 \leq s_1 \leq (t-2)l+l-1+l-1$.
As $n \geq 6.5025 \times 2^{(t-2)(l-1)+l-1+l-1}$ $\geq 6.5025 \times 2^{s_2} $, we conclude that $\rho < \rho(K_1 \vee H^*) = \rho(K_1 \vee H_{\mathcal{OP}}(tl-l-1, l-1)$ by Lemma \ref{lm5},  
which is impossible by the maximality of $\rho = \rho(G)$.
Up to this case, we can conclude that $G\cong K_2 \vee H_{\mathcal{OP}}(tl-l-1, l-1)$.
\end{subcase}  

\begin{subcase}  
$r = t-1$ and $l \geq 4$. Then we have the following claims.
\begin{claimmmm}\label{claim3.5}  
Let $P_{n_1} = v_1v_2 \cdots v_{n_1}$, $P' = v_1v_2 \cdots v_{(t-1)l}$ and $P_{\bar{n}_1} = v_{(t-1)l+1} \cdots v_{n_1}$, where $\bar{n}_1=n_1-(t-1)l-1$.
Then $K_1 \vee G[A]$ does not contain $tP_l$ if and only if $\bar{n}_1+n_2 \leq l-2$ or $n_2+n_3 \leq l-2$.
\end{claimmmm}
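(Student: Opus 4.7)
The strategy is to analyze how the $t$ vertex-disjoint $l$-paths of a hypothetical $tP_l$ inside $K_1\vee G[A]$ must distribute, exploiting that the apex $u$ belongs to at most one of them. As a preliminary reduction, I would observe that under the standing hypothesis $r=t-1$, if some $P_{n_j}$ with $j\ge 2$ had $n_j\ge l$, then combining an $l$-sub-path of $P_{n_j}$ with the $t-1$ disjoint $l$-sub-paths of $P_{n_1}$ immediately produces $tP_l\subseteq G[A]\subseteq K_1\vee G[A]$, and in that situation $n_2\ge l$ makes the inequality $\bar n_1+n_2\ge l-1$ automatic; hence in the remaining cases one may assume $n_i\le l-1$ for every $i\ge 2$, so that $\bar n_1,n_2\le l-1$.

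For the direction that constructs a $tP_l$ under the stated inequality, I would extract the $t-1$ consecutive $l$-paths $\{v_1,\ldots,v_l\},\ldots,\{v_{(t-2)l+1},\ldots,v_{(t-1)l}\}$ from the beginning of $P_{n_1}$, leaving $P_{\bar n_1}=v_{(t-1)l+1}\cdots v_{n_1}$ as the untouched tail, and then form a $t$-th $l$-path through $u$ by concatenating an $a$-sub-path of one sufficiently large leftover component with a $(l-1-a)$-sub-path of another through $u$, using the join edges of $K_1\vee G[A]$. The pair $(P_{\bar n_1},P_{n_2})$ works whenever $\bar n_1+n_2\ge l-1$, and the pair $(P_{n_2},P_{n_3})$ works whenever $n_2+n_3\ge l-1$; in either case non-negative integers $a,b$ with $a+b=l-1$ and $a,b$ bounded by the corresponding component sizes can be chosen.

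For the reverse direction, I would assume $tP_l\subseteq K_1\vee G[A]$, let $L$ denote the unique $l$-path containing $u$, and write $L=Q_1\cup\{u\}\cup Q_2$, where $Q_1,Q_2$ are disjoint sub-paths of $G[A]$ whose sizes sum to $l-1$ (either may be empty if $u$ is an endpoint of $L$). The remaining $t-1$ $l$-paths are confined to $P_{n_1}$ by the smallness of the other components and thus consume $(t-1)l$ of its vertices. I would then case-split on which components of $G[A]$ contain $Q_1,Q_2$: both inside the $P_{n_1}$-leftover; one in the leftover and one in some $P_{n_j}$ with $j\ge 2$; in two distinct components $P_{n_j},P_{n_k}$; or both inside the same $P_{n_j}$ with $j\ge 2$. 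Using the monotone bounds $n_j\le n_2$ and (where applicable) $n_k\le n_3$, each case pins down either $\bar n_1+n_2\ge l-1$ or $n_2+n_3\ge l-1$, giving the required conclusion.

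The main obstacle I anticipate is the first sub-case, where both $Q_1$ and $Q_2$ lie inside the leftover of $P_{n_1}$: depending on how the $t-1$ extracted $l$-paths are positioned inside $P_{n_1}$, the leftover may split into several sub-intervals whose total size is $\bar n_1$, and one has to ensure $|Q_1|+|Q_2|\le\bar n_1$ regardless of the placement. This will be handled by a single vertex count inside $P_{n_1}$: since the $t-1$ extracted $l$-paths together with $Q_1,Q_2$ are pairwise vertex-disjoint sub-paths of $P_{n_1}$, their total vertex count $(t-1)l+|Q_1|+|Q_2|$ is at most $n_1$, which (using that $n_1-(t-1)l$ equals the number of leftover vertices) forces $|Q_1|+|Q_2|\le\bar n_1$ and hence $\bar n_1\ge l-1$, so $\bar n_1+n_2\ge l-1$ as needed. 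The remaining cases then reduce to monotonicity of the ordering $n_2\ge n_3\ge\cdots$.
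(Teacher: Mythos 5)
Your proposal is sound and, in its constructive half, coincides with the paper's intended argument: split off $P'=v_1\cdots v_{(t-1)l}$, and obtain the $t$-th $l$-path through the apex $u$ by concatenating pieces of the two largest remaining components. Where you go beyond the paper is the converse. The paper's proof merely observes that the longest path of $K_1\vee(G[A]-P')$ has order $\max\{\bar{n}_1+n_2,\,n_2+n_3\}+1$, tacitly assuming that any copy of $tP_l$ in $K_1\vee G[A]$ may be taken to consist of $t-1$ paths inside $P'$ plus one path through $u$; your decomposition $L=Q_1\cup\{u\}\cup Q_2$ together with the count $(t-1)l+|Q_1|+|Q_2|\le n_1$ is exactly the justification the paper omits, and your case analysis over the locations of $Q_1,Q_2$ closes it correctly. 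Two small repairs: you should also dismiss a $tP_l$ none of whose paths meets $u$ (after your reduction $n_i\le l-1$ for $i\ge 2$, all $t$ paths would lie in $P_{n_1}$, contradicting $r=t-1$), and note that you silently take $\bar{n}_1=n_1-(t-1)l$, the actual number of tail vertices, whereas the claim's formula carries an extra $-1$ that conflicts with its own vertex list; your convention is the one under which the inequalities are correct.

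The substantive discrepancy is the logical form of the criterion. What you prove is: $K_1\vee G[A]$ contains $tP_l$ if and only if $\bar{n}_1+n_2\ge l-1$ \emph{or} $n_2+n_3\ge l-1$; equivalently, $tP_l$-freeness is equivalent to \emph{both} $\bar{n}_1+n_2\le l-2$ \emph{and} $n_2+n_3\le l-2$. The claim as printed asserts freeness iff \emph{at least one} of the two inequalities holds, and that ``if'' direction is false: for $t=2$, $l=6$ and $G[A]=P_{10}\cup P_1\cup P_1$ one has $n_2+n_3=2\le l-2$, yet $v_1\cdots v_6$ together with $v_7v_8v_9v_{10}uw$ ($w$ a vertex of a $P_1$) form a $2P_6$ — indeed your forward construction already shows that each single inequality $\ge l-1$ produces a $tP_l$, so no proof of the printed disjunctive statement is possible. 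Thus you have proved the corrected (conjunctive) version, which is what the paper's own longest-path computation actually supports and what the subsequent applications in Claim \ref{claim3.6} need; the printed ``or'' should be read as ``and''.
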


\begin{proof}
Observing that the longest path which belongs to $K_1 \vee (G[A]-P')$ has an order of $\bar{n}_1 + n_2+1$ or $n_2+n_3+1$.
Furthermore, $K_1 \vee (P_{\bar{n}_1} \cup P_{n_2})$ or $K_1 \vee (P_{n_2} \cup P_{n_3})$ contains a path $P_i$ for each $i \in \{3,\ldots,\max\{\bar{n}_1+ n_2 + 1, n_2+n_3 +1\}\}.$
Therefore, $\bar{n}_1+n_2 \leq l-2$ or $n_2+n_3 \leq l-2$ if and only if $K_1 \vee G[A]$ is $tP_l$‐free.
Hence, the claim holds.
\end{proof}

\begin{claimmmm}\label{claim3.6}  
$n_i=n_2$ for $i \in \{3,\ldots,q-1\}$.
\end{claimmmm}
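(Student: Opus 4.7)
The plan is to replay the transformation argument of Claim \ref{claim3.3} in the more delicate setting of Subcase 2. Suppose for contradiction that some $i_0 \in \{3, \ldots, q-1\}$ satisfies $n_{i_0} \leq n_2 - 1$, and choose $i_0$ minimal. Let $H''$ denote the $(n_{i_0}, n_q)$-transformation of $G[A]$ from Definition \ref{de1}, which replaces $P_{n_{i_0}} \cup P_{n_q}$ by $P_{n_{i_0}+1} \cup P_{n_q-1}$ (or by $P_{n_{i_0}+1}$ when $n_q = 1$). The proof then proceeds in two steps: (i) verify that $K_1 \vee H''$ is still $tP_l$-free using Claim \ref{claim3.5}, and (ii) invoke Lemma \ref{lm3} to obtain $\rho(K_1 \vee H'') > \rho$, contradicting the maximality of $\rho = \rho(G)$.

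For step (i), the longest component is untouched, so $\bar{n}_1(H'') = \bar{n}_1$; moreover, since $n_{i_0} + 1 \leq n_2$ and $n_q - 1 < n_2$, the second-largest part satisfies $n_2(H'') = n_2$. When $i_0 \geq 4$, the third-largest part also satisfies $n_3(H'') = n_3$, so both critical inequalities appearing in Claim \ref{claim3.5} carry over verbatim and $K_1 \vee H''$ is $tP_l$-free. For step (ii), the standing hypothesis $n \geq 6.5025 \times 2^{tl}$ dominates the threshold $6.5025 \times 2^{n_q+2}$ needed in Lemma \ref{lm3}, since $n_q \leq n_1 \leq tl$; the lemma then yields the strict spectral increase and closes the contradiction. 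This already handles every $i_0 \in \{4, \ldots, q-1\}$.

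The main obstacle I expect is the residual case $i_0 = 3$: the transformation now raises $n_3$ to $n_3 + 1$, and in the tight configuration $n_2 + n_3 = l-2$ the new sequence would violate the second inequality of Claim \ref{claim3.5}, so step (i) fails as stated. To dispatch this borderline case I would mimic the closing argument of Claim \ref{claim3.3}: delete all edges of a well-chosen middle path $P^{n_2+3}$ from $G[A]$ and reattach its vertices as pendants of longer surviving components, invoking the two-sided estimate
$$\frac{1}{\rho^2} \leq x_u x_v \leq \frac{1}{\rho^2} + \frac{5.08}{\rho^3}$$
from Lemma \ref{lm2} to show that the net Rayleigh contribution $X^T(A(G')-A(G))X$ is strictly positive under the theorem's hypothesis on $n$, while checking directly via Claim \ref{claim3.5} that the redistribution does not push either $\bar{n}_1 + n_2$ or $n_2 + n_3$ above $l-2$. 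Combined with steps (i) and (ii), this rules out every $i_0 \in \{3, \ldots, q-1\}$ with $n_{i_0} < n_2$ and completes the proof.
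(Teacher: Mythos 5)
Your handling of $i_0\ge 4$ coincides with the paper's, but the residual case $i_0=3$ — which you correctly flag as the crux — is not actually resolved, and this is a genuine gap. The paper settles it with elementary path transformations only: after first eliminating the sub-case where some $n_j<n_3$ with $4\le j\le q-1$ (an $(n_j,n_q)$-transformation leaves $\bar{n}_1,n_2,n_3$ untouched), it splits according to which inequality of Claim \ref{claim3.5} holds. If $\bar{n}_1+n_2\le l-2$, the $(n_3,n_q)$-transformation is harmless, since raising $n_3$ to $n_3+1\le n_2$ does not disturb that inequality; if instead $n_2+n_3\le l-2$, the paper applies the $(n_1,n_2)$-transformation, which lowers $n_2$ to $n_2-1\ (\ge n_3)$ so that the new second and third longest orders sum to $n_2+n_3-1\le l-3$, and Lemma \ref{lm3} then gives the spectral increase and the contradiction in both branches. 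This dichotomy, and in particular the $(n_1,n_2)$-transformation for the branch $n_2+n_3\le l-2$, is the idea missing from your proposal.

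Your substitute — deleting all edges of a middle path and redistributing its vertices as pendants, as in the closing argument of Claim \ref{claim3.3}, with the Rayleigh estimate from Lemma \ref{lm2} — is not shown to work, and in the genuinely tight configuration it cannot be carried out as described. Take $n_2+n_3=l-2$, $\bar{n}_1+n_2\ge l-1$, $n_3=n_2-1$, with all of $P^3,\dots,P^{q-1}$ of order $n_3$. By Claim \ref{claim3.5}, lengthening $P^2$ or any path of order $n_3$ by even one vertex makes the new second and third longest orders sum to $l-1$, so the modified graph contains $tP_l$; attaching a pendant to an interior vertex of a path creates, together with the dominating vertex, a $K_{2,3}$ and destroys outerplanarity. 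Hence the only legal absorption points are the two ends of $P^1$, which is not enough to place the deleted vertices with a strict net gain of edges — and the Lemma \ref{lm2} Rayleigh computation you invoke needs more edges added than deleted. So precisely the verification you defer ("the redistribution does not push $\bar{n}_1+n_2$ or $n_2+n_3$ above $l-2$") fails, and the right tool here is Lemma \ref{lm3} applied to the edge-neutral $(n_1,n_2)$-transformation, not the eigenvector-bound estimate.
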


\begin{proof}
Suppose to the contrary, then there exists an $i_0 = \min\{i|3 \leq i \leq q-1, n_i \leq n_2-1\}$.
If $i_0 > 3$, then let $H'$ be an $(n_{i_0},n_q)$-transformation of $G[A]$. Thus, $n_1(H')=n_1$(it means $\bar{n}_1(H') = \bar{n}_1$), $n_2(H')=\max\{n_2,n_{i_0}+1\}=n_2$ and $n_3(H')=n_3$. Therefore, $\bar{n}_1(H')+n_2(H') \leq l-2$ or $n_2(H')+n_3(H') \leq l-2$. By Claim \ref{claim3.5}, $K_1 \vee H'$ is $tP_l$-free. However, by Lemma \ref{lm3}, we have $\rho(K_1 \vee H') > \rho$, a contradiction.
If $i_0 =3$, we first suppose that there exists a $j$ such that $n_j < n_3 \leq n_2-1$, where $4 \leq j \leq q-1$. Let $H''$ be the $(n_j,n_q)$-transformation of $G[A]$, where $4 \leq j \leq q-1.$ Then $\bar{n}_{1}(H'') = \bar{n}_{1}$, $n_{2}(H'') = n_{2}$ and $n_{3}(H'')=\max\{n_{3},n_{j}+1\}=n_{3}.$ Using the methods similar to those for $i_0>3$, we can also get a contradiction. Then we suppose that $n_j=n_3$ for any $j\in\{4,\cdots,q-1\}$.
If $\bar{n}_1+n_2 \leq l-2$, let $H^{(3)}$ be the $(n_3,n_q)$-transformation of $G[A]$, then $n_{1}(H^{(3)})=n_{1},n_{2}(H^{(3)})=\max\{n_{2},n_{3}+1\}=n_{2}.$
Thus, we have $\bar{n}_{1}(H^{(3)})+n_{2}(H^{(3)}) \leq l-2$. If $n_2+n_3 \leq l-2$, let $H^{(4)}$be the $(n_1,n_2)$-transformation of $G[A]$, then $n_1(H^{(4)})=n_1+1$, $n_{2}(H^{(4)})=\max\{n_{2}-1,n_{3}\}=n_{2}-1$ and $n_{3}(H^{(4)})=n_{3}.$
So, we get $n_2(H^{(4)})+n_3(H^{(4)})=n_2+n_3-1 \leq l-3<l-2.$ It follows from Claim 2 that neither $K_1 \vee H^{(3)}$ nor $K_1 \vee H^{(4)}$ contains $tP_l.$ As $n \geq 6.5025 \times 2^{(t-2)(l-1)+l-1+l-1+2} \geq 6.5025 \times 2^{s_2+2} $, by Lemma \ref{lm3}, we get $\rho(K_{1} \vee H^{(3)})>\rho(G)$ and $\rho(K_{1} \vee H^{(4)})>\rho(G)$, a contraction.
\end{proof}

Now we assert that $G[A] \cong H_{\mathcal{OP}}((t-1)l+l-2-x, x)$ for a fixed integer $x \in [1, \lfloor\frac{l-2}{2}\rfloor]$, and then $G \cong K_1 \vee H_{\mathcal{OP}}((t-1)l+l-2-x, x)$.  
Otherwise, we can obtain $H_{\mathcal{OP}}((t-1)l+l-2-x, x)$ by applying a series of $(s_1, s_2)$-transformations to $G[A]$, where $s_2 \leq s_1 \leq (t-1)(l-1) + l-2$.  
As $n \geq 6.5025 \times 2^{(t-2)(l-1)+l-1+l-1+2} \geq 6.5025 \times 2^{s_2+2} $, we have that $\rho < \rho(K_1 \vee H_{\mathcal{OP}}((t-1)l+l-2-x, x)$ by Lemma \ref{lm3}, which is impossible by the maximality of $\rho = \rho(G)$.

Next, we assert that $\rho(K_1 \vee H_{\mathcal{OP}}((t-1)l+l-2-x, x)) < \rho(K_1 \vee H^*) = \rho(K_1 \vee H_{\mathcal{OP}}(tl-l-1, l-1))$.
Let $P^{(i)}$ to denote the $i$-th longest path of $H_{\mathcal{OP}}((t-1)l+l-2-x, x)$ for any $i \in \{1, \ldots, q\}$ and $|P^{(i)}|$ to denote the orders of these $P^{(i)}$s, where $|P^{(1)}|=(t-1)l+l-2-x$, $|P^{(i)}|=x $ for $i \in \{2, \ldots, q-1\}$ and $|P^{(q)}| \geq 0$. For convenience, we write that $n_1^\circ=|P^{(1)}|$, $n_2^\circ=|P^{(i)}|(2 \leq i \leq q-1)$ and $n_3^\circ=|P^{(q)}|$ in the following text.
Assuming that $P^{(1)}=v_1v_2 \cdots v_{(t-1)l+l-2-x}$. Since $t \geq 2$, $l \geq 4$ and $n_1^\circ = (t-1)l+l-2-x$, there exists an edge $w'w''=v_{(t-1)l-1}v_{(t-1)l} \in E(P^{(1)})$. 
Note that $n-1 = \sum_{i=1}^{q} n_i \leq n_1 + (q-1)n_2$, it implies that $q-2 \geq n_3^\circ + 1$ by $n \geq 6.5025 \times 2^{(t-1)l+l-2+2} \gg (t-1)l+(l-2)+{\lfloor\frac{l-2}{2}\rfloor}^2+ 1$.  
Then $P^{(2)}, P^{(3)}, \ldots, P^{(n_3^\circ+2)}$ are paths of order $n_2^\circ$.  
And assume that $P^{(q)} = w_1w_2 \cdots w_{n_3^\circ}$.
Let $G'$ be obtained from $G$ by

(1) deleting $w'w''$ and joining $w''$ to an endpoint of $P^{(n_3^\circ+2)}$;

(2) deleting all edges of $P^{(q)}$ and joining $w_i$ to an endpoint of $P^{(i+1)}$ for each $i \in \{1, \ldots, n_3^\circ\}$, 
it means that $G'$ is obtained from $G$ by deleting $n_3^\circ$ edges and adding $n_3^\circ + 1$ edges.  
By Lemma \ref{lm4},  
\[  
\frac{1}{\rho^2} \leq x_{u_i}x_{u_j} \leq \frac{1}{\rho^2} + \frac{4.08}{\rho^3} + \frac{4.1616}{\rho^4} < \frac{1}{\rho^2} + \frac{5.08}{\rho^3}  
\]  
for any vertices $u_i, u_j \in A$. Therefore,  
\[  
\rho(G') - \rho \geq \frac{X^T(A(G') - A(G))X}{X^TX} > \frac{2}{X^TX} \left( \frac{(n_3^\circ + 1)}{\rho^2} - \frac{n_3^\circ}{\rho^2} - \frac{5.08n_3^\circ}{\rho^3} \right) > 0,  
\]  
where $n_3^\circ <\left\lfloor\frac{l-2}2\right\rfloor\leq\frac1{5.08}(\sqrt{n-1})\leq\frac1{5.08}\rho$ as $n \geq (5.08\left\lfloor\frac{l-2}2\right\rfloor)^2+1$ and $\rho \geq \rho(K_{1,n-1})=\sqrt{n-1}$ (since $K_{1,n-1}$ is a $tP_l$-free outerplanar graph for $l \geq 4$). So, $\rho(G^{\prime})>\rho.$
However, $P^{(1)}$ in $G'$ has only $(t-2)$ independent $l$-paths, which means that $r \leq t-2$. Then, by Subcase \ref{subcase2.1},
we have $\rho(K_1 \vee H_{\mathcal{OP}}((t-1)l+l-2-x, x) = \rho < \rho(G') \leq \rho(K_1 \vee H_{\mathcal{OP}}(tl-l-1, l-1))$,  
which is impossible by the maximality of $\rho = \rho(G)$, leading to a contradiction.  
\end{subcase}
\end{case}
\begin{case}{$G$ is a disconnected graph.}\label{case3.2}

Let $n \geq N + \frac{3}{2} + 3\sqrt{N - \frac{7}{4}}$, $N = \max\{1.27 \times 10^{7}, 6.5025 \times 2^{(t-2)(l-1)+2l}, (5.08\left\lfloor\frac{l-2}2\right\rfloor)^2+1\}$.
Suppose that $G$ is disconnected with components $G_1, G_2, \ldots, G_s$ ($s \geq 2$), and without loss of generality, assume $\rho(G_1) = \rho(G) = \rho$.
For each $i \in \{1, \ldots, s\}$, let $V(G_i) = \{u_{i,1}, u_{i,2}, \ldots, u_{i,n_i}\}$ and $t_i$ be the maximal number of independent $l$-paths of $G_i$.
Clearly, $\sum_{i=1}^s t_i \leq t-1$ and $\sum_{i=1}^s n_i = n$.
If $t_1 \leq t-2$, then constructing $G'$ by removing all the last edge of $P_l$s in $G_2, \ldots, G_s$, and then connecting just one of the endpoints of each $P_{l-1}$ in these components to $u_{1,1}$ in $G_1$ results in a connected $tP_l$-free outerplanar graph.
Since $G_1$ is a proper subgraph of $G'$, $\rho(G') > \rho(G_1) = \rho(G)$, contradicting the maximality of $\rho(G)$. Thus, $t_1 = t-1$, it means that $G_2,G_3,\ldots,G_s$ is $P_l$-free.
Furthermore, $G_1$ must be an extremal graph in SPEX$_\mathcal{OP}(n_1, tP_l)$, otherwise $G$ would not be an extremal graph in SPEX$_\mathcal{OP}(n, tP_l)$.
If $n_1 \geq N$, then $G_1 \cong K_1 \vee H(tl-l-1, l-1))$ for order $n_1$ by Case \ref{case3.1}.
And constructing a $G''$ by connecting all the longest paths in $G_2,G_3,\ldots,G_l$ to $K_1$ in $G_1$ results in a connected $tP_l$-free outerplanar graph, again contradicting the maximality of $\rho(G)$.
By Lemma \ref{lm7}, we obtain
$$\rho=\rho(G_1)\leq \frac{3}{2}+\sqrt{n_1-\frac{7}{4}} < \frac{3}{2}+\sqrt{N-\frac{7}{4}} \leq \sqrt{n-1}=\rho(K_{1, n-1})$$
as $n\geq N+\frac{3}{2}+3\sqrt{N-\frac{7}{4}}$ and $K_{1,n-1}$ is also $tP_l$-free for for $t \geq 1$ and $l \geq 4$ or $t \geq 2$ and $l \geq 3$.
This contradicts the maximality of $\rho=\rho(G)$.
Therefore, $G$ must be connected. By Case \ref{case3.1}, we conclude that $G \cong K_1 \vee H_{\mathcal{OP}}(tl-l-1,l-1)$, which completes the proof.
\end{case}
\vspace*{-6.56mm}
\end{proof}

\section{Proofs of Theorems \ref{thm3} and \ref{thm4}}\label{sc4}
In this section, we provide the detailed proofs of Theorems \ref{thm3} and \ref{thm4}.
When it comes to the study of planar graphs, the following structural lemmas are equally crucial in uncovering the properties of extremal graphs.

\begin{lemma}[Key Structure of Planar Extremal Graphs]\cite{X.L. Wang}\label{lm4}
Let $F$ be a planar graph not contained in $K_{2,n-2}$ where $n \geq \max\{2.67 \times 9^{17}, \frac{10}{9}|V(F)|\}$.  
Suppose that $G$ is a connected extremal graph in $\mathrm{SPEX}_{\mathcal{P}}(n, F)$ and
$X$
is the positive eigenvector of $\rho := \rho(G)$ with $\max_{v \in V(G)} x_{v} = 1$.   
Then the following two statements hold.  
 \begin{description}
   \item[(i)] There exist two vertices $u',u''\in V( G)$ such that $R:=N_G(u')\cap N_G( u'')=V( G)\setminus\{u', u''\}$ and $x_{u'}= x_{u''}= 1.$ 
In particular, $G$ contains a copy of $K_{2, n-2}.$
   \item[(ii)] The subgraph $G[R]$ of $G$ induced by $R$ is a disjoint union of some paths and cycles.
Moreover, if $G[R]$ contains a cycle then it is exactly a cycle, i.e., $G[R] \cong C_{n-2}$, and
if $u'u''\in E(G)$ then $G[R]$ is a disjoint union of some paths.
 \end{description}
 \end{lemma}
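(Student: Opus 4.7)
The plan is to follow the Tait--Tobin strategy for spectral extremal planar graphs, refined for an arbitrary forbidden planar graph $F$. First, exploit the hypothesis: since $F$ is not a subgraph of $K_{2,n-2}$, the planar graph $K_{2,n-2}$ is itself $F$-free, hence $\rho := \rho(G) \geq \rho(K_{2,n-2}) = \sqrt{2(n-2)}$. Normalising $X$ so that $\max_v x_v = 1$, the eigenvalue equation $\rho x_v = \sum_{u \sim v} x_u$ combined with Euler's formula $|E(H)| \leq 3|V(H)| - 6$ (applied to $G$ and to carefully chosen induced subgraphs) forces almost every entry $x_v$ to be of order $\Theta(1/\sqrt{n})$, so only a bounded number of vertices can have $x_v$ bounded away from $0$. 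A closer look at $\rho^2 x_v = \sum_w (A^2)_{vw} x_w$ then shows that at most two vertices can have $x$-value close to $1$; call them $u'$ and $u''$. Extremality of $\rho$ upgrades this to $x_{u'} = x_{u''} = 1$.

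For the joint-domination statement $N_G(u') \cap N_G(u'') = V(G) \setminus \{u', u''\}$, set $R := V(G) \setminus \{u', u''\}$ and suppose, for contradiction, that some $v \in R$ is not adjacent to, say, $u'$. I would build $G^{\ast}$ from $G$ by deleting an edge incident to $v$ whose other endpoint has minimum eigenvector weight and inserting $u'v$; the eigenvector estimates from the previous step show that the Rayleigh quotient strictly increases. One must check that $G^{\ast}$ remains planar and $F$-free: planarity is preserved because $u'v$ can be drawn inside the face containing both endpoints in a fixed embedding, while $F$-freeness follows because the modification only enlarges the $K_{2,\ast}$-like part of the graph and $F$ is not a subgraph of any $K_{2,m}$. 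This contradicts extremality, so $R \subseteq N_G(u') \cap N_G(u'')$ and $G \supseteq K_{2,n-2}$.

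The structure of $G[R]$ is then dictated by planarity. Fix a planar embedding of $G$. The subgraph $K_{2,n-2}$ on $\{u', u''\} \cup R$ arranges $R$ in a cyclic order $v_1, v_2, \ldots, v_{n-2}$ around $u'$, partitioning the plane into $n-2$ four-faces $u' v_i u'' v_{i+1}$ (indices modulo $n-2$). Any additional edge must lie inside one of these $4$-faces, and because $v_i v_j$ with $|i-j| \geq 2$ would cross the boundary, the only permissible chord in each face is $v_i v_{i+1}$. Consequently $E(G[R])$ is a sub-family of the Hamilton cycle $v_1 v_2 \cdots v_{n-2} v_1$, so $G[R]$ is a disjoint union of paths and cycles; any cycle that appears must therefore be the entire $C_{n-2}$. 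Finally, if $u'u'' \in E(G)$ then this edge must also live inside some $4$-face, blocking the corresponding chord $v_i v_{i+1}$; then $G[R]$ misses at least one edge of the Hamilton cycle and must be a disjoint union of paths.

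The main obstacle is the joint-domination argument in the second paragraph. Extracting joint domination from the spectral data requires tight quantitative control on the eigenvector entries together with a delicate local edge-swap that simultaneously preserves planarity and $F$-freeness, and one must rule out the creation of a new copy of $F$ after the swap. By contrast, the initial eigenvector estimates rely on standard spectral arguments combined with Euler's formula, and the final structural analysis of $G[R]$ is a straightforward consequence of fixing a planar embedding of $K_{2,n-2}$.
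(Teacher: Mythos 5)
You should note at the outset that the paper does not prove Lemma \ref{lm4} at all: it is imported verbatim from Wang, Huang and Lin \cite{X.L. Wang}, so the relevant benchmark is their proof, whose overall strategy (Tait--Tobin eigenvector concentration, local edge swaps to force joint domination, then reading off $G[R]$ from the essentially unique planar embedding of $K_{2,n-2}$) your outline does mirror. Your treatment of part (ii) is essentially correct: every face of an embedded $K_{2,n-2}$ is a quadrilateral $u'v_iu''v_{i+1}$, so the only admissible extra edges in $R$ are consecutive chords $v_iv_{i+1}$, giving maximum degree $2$ in $G[R]$, the ``cycle means all of $C_{n-2}$'' statement, and the path conclusion when $u'u''\in E(G)$ occupies one of these faces.

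The genuine gaps are in part (i), precisely at the step you flag as the main obstacle. First, your $F$-freeness argument for the swap is wrong as stated: the modified graph $G^{\ast}$ is not a $K_{2,m}$ but $K_{2,m}$ together with the surviving edges of $G[R]$ (and possibly $u'u''$), so a new copy of $F$ could use the inserted edge $u'v$ jointly with those edges; the hypothesis $F\not\subseteq K_{2,n-2}$ only excludes copies living inside the bipartite part. The cited proof has to control exactly this, by deleting enough edges at the offending vertex before reattaching it and by quantitative comparison of the lost and gained Rayleigh weight; it cannot be dismissed in one line. Second, planarity of $G^{\ast}$ is not justified: after deleting one edge at $v$ there is no reason $u'$ and $v$ lie on a common face. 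The standard repair is to delete \emph{all} edges at $v$ and re-embed $v$ inside a quadrilateral face having both $u'$ and $u''$ on its boundary, but that presupposes the almost-complete domination structure which, at that point of your argument, has not yet been established. Third, the quantitative backbone --- that all but a bounded number of vertices have entries $O(1/\sqrt{n})$, that exactly two vertices have entry close to $1$, and the degree and weight estimates needed to make the Rayleigh-quotient comparisons strict (this is where the threshold $2.67\times 9^{17}$ comes from) --- is asserted rather than proved, and it constitutes the bulk of \cite{X.L. Wang}. Finally, $x_{u'}=x_{u''}=1$ is not an ``upgrade by extremality'' prior to domination; it follows only afterwards, from the eigenequations at $u'$ and $u''$ once both are adjacent to every vertex of $R$, which force $x_{u'}=x_{u''}$ and hence both equal to the normalized maximum. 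As it stands, the proposal is a correct road map but not a proof.
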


\begin{lemma}[Eigenvector Bounds]\cite{X.L. Wang}.\label{lm5}
Suppose further that G contains $K_{2, n- 2}$ as a subgraph. Let $u_{1}, u_{2}$ be the two vertices of $G$ that have degree $n-2$ in $K _{2, n- 2}$.
For any vertex $u\in V( G) \setminus \{ u_1, u_2\} $, we have 
$$\frac 2\rho \leq x_u\leq \frac 2\rho + \frac {4. 496}{\rho ^2}.$$
\end{lemma}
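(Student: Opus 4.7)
My plan is to prove the two inequalities separately, with the lower bound being essentially immediate from the eigenequation and the upper bound obtained from a bootstrap argument that exploits the structural constraint from Lemma \ref{lm4}.

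For the lower bound, I would invoke Lemma \ref{lm4}(i), which guarantees that $R = N_G(u_1) \cap N_G(u_2)$, so every vertex $u \in V(G) \setminus \{u_1, u_2\}$ is adjacent to both $u_1$ and $u_2$. Writing the eigenequation at $u$ and using $x_{u_1} = x_{u_2} = 1$ (also from Lemma \ref{lm4}(i)) together with positivity of $X$ (Perron--Frobenius), we obtain
\[
\rho\, x_u \;=\; \sum_{v \in N_G(u)} x_v \;\geq\; x_{u_1} + x_{u_2} \;=\; 2,
\]
which yields $x_u \geq 2/\rho$.

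For the upper bound, the key structural input is Lemma \ref{lm4}(ii): the induced subgraph $G[R]$ is a disjoint union of paths and cycles, so every vertex of $R$ has degree at most $2$ inside $R$. Let $M := \max_{v \in R} x_v$. For any $u \in R$, the neighbors of $u$ decompose into $\{u_1, u_2\}$ plus at most two vertices of $R$, so
\[
\rho\, x_u \;\leq\; x_{u_1} + x_{u_2} + 2M \;=\; 2 + 2M.
\]
Taking the maximum over $u \in R$ gives $\rho M \leq 2 + 2M$, i.e.\ $M \leq 2/(\rho-2)$. Expanding
\[
\frac{2}{\rho - 2} \;=\; \frac{2}{\rho} + \frac{4}{\rho(\rho-2)} \;=\; \frac{2}{\rho} + \frac{4}{\rho^2} + \frac{8}{\rho^2(\rho-2)},
\]
so to arrive at $2/\rho + 4.496/\rho^2$ it suffices to check $8/(\rho - 2) \leq 0.496$, i.e.\ $\rho \geq 18.13$. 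Since Lemma \ref{lm4} gives $K_{2,n-2} \subseteq G$ and hence $\rho \geq \rho(K_{2,n-2}) = \sqrt{2(n-2)}$, the hypothesis $n \geq 2.67 \times 9^{17}$ makes this trivial.

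The only nontrivial step is the bootstrap in the upper bound, and the main thing to get right there is the degree-at-most-two observation in $G[R]$; everything else is algebraic manipulation tuned to the specific constant $4.496$. I expect the only subtlety is confirming that Lemma \ref{lm4}(ii) applies uniformly whether or not $u_1u_2 \in E(G)$ and whether $G[R]$ is a disjoint union of paths or happens to be the single cycle $C_{n-2}$; in all of those cases each vertex of $R$ still has at most two $R$-neighbors, so the bootstrap is valid without case distinction.
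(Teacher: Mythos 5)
There is nothing in this paper to compare your argument against: Lemma \ref{lm5} is quoted from \cite{X.L. Wang} and no proof is reproduced here, so I can only judge your proposal on its own terms, and it is correct. The lower bound is the standard one: by Lemma \ref{lm4}(i) every $u\in R$ is adjacent to both $u_1,u_2$ and $x_{u_1}=x_{u_2}=1$, so $\rho x_u\ge 2$. Your upper bound via $M:=\max_{v\in R}x_v$, the inequality $\rho M\le 2+2M$, and the expansion $\frac{2}{\rho-2}=\frac{2}{\rho}+\frac{4}{\rho^2}+\frac{8}{\rho^2(\rho-2)}\le \frac{2}{\rho}+\frac{4.496}{\rho^2}$ once $\rho\ge 18.13$ is sound, and the numerical hypothesis is amply satisfied because $\rho\ge\rho(K_{2,n-2})=\sqrt{2n-4}$ with $n\ge 2.67\times 9^{17}$; indeed your bound $4+\frac{8}{\rho-2}$ is slightly sharper than the quoted constant $4.496$, which simply reflects the particular estimate chain in \cite{X.L. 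Wang}. Your worry about circularity (whether the source proves the eigenvector bounds before or after the path/cycle structure) is also moot: the degree-at-most-two fact in $G[R]$ does not even need Lemma \ref{lm4}(ii), since a vertex of $R$ with three neighbours in $R$ would yield a $K_{3,3}$ through $u_1$ and $u_2$, contradicting planarity. The one point you should make explicit is that $u_1,u_2$ are taken to be the vertices $u',u''$ of Lemma \ref{lm4}(i) — this identification is forced (any copy of $K_{2,n-2}$ must use them, again by $K_{3,3}$-freeness) and it is what licenses $x_{u_1}=x_{u_2}=1$, on which both inequalities rest.
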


\begin{definition}[Planar Path Transformation]\cite{X.L. Wang}\label{de2}
Let $s_1$ and $s_2$ be two integers with $s_1\geq s_2\geq1$, and let $H=P_{s_1}\cup P_{s_2}\cup H_0$, where $H_0$ is a disjoint union of paths. We say that $H^*$ is an $(s_1,s_2)$-transformation of $H$ if
$$H^*:=\begin{cases}P_{s_1+1}\cup P_{s_2-1}\cup H_0&\text{if }s_2\geq2,\\P_{s_1+s_2}\cup H_0&\text{if }s_2=1.\end{cases}$$
Clearly, $H^*$ is a disjoint union of paths, which implies that $K_2 \vee H^*$ is planar.
\end{definition}

This transformation strictly also increases the spectral radius of \( K_2 \vee H \) for large \( n \).

\begin{lemma}[Spectral Comparison for Planar Transformations]\cite{X.L. Wang}\label{lm6}
  For \( H \) and \( H^* \) as in Definition \ref{de2}, if \( n \geq \max\{2.67 \times 9^{17}, 10.2 \times 2^{s_2} + 2\} \), then \( \rho(K_2 \vee H^*) > \rho(K_2 \vee H) \).
\end{lemma}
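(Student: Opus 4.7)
The plan is to compare $\rho(G^*) := \rho(K_2 \vee H^*)$ and $\rho := \rho(K_2 \vee H)$ via a Rayleigh-quotient argument using the Perron eigenvector $X$ of $G := K_2 \vee H$, normalised so $\max_v x_v = 1$. By Lemma \ref{lm4}, the two dominant vertices $u_1, u_2$ satisfy $x_{u_1} = x_{u_2} = 1$, and by Lemma \ref{lm5}, every other vertex $v$ satisfies $\tfrac{2}{\rho} \le x_v \le \tfrac{2}{\rho} + \tfrac{4.496}{\rho^2}$. The subgraph inclusion $K_{2,n-2} \subseteq G$ gives $\rho \ge \sqrt{2(n-2)}$, converting the eigenvector bounds into explicit estimates in $n$.

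In the easy case $s_2 = 1$, the transformation simply joins the lone vertex of $P_1$ to an endpoint $v$ of $P_{s_1}$, so $G^*$ has exactly one more edge than $G$ and the Rayleigh bound yields
\[
\rho(G^*) - \rho \;\ge\; \frac{X^{\mathrm T}\bigl(A(G^*) - A(G)\bigr) X}{X^{\mathrm T} X} \;=\; \frac{2\, x_u x_v}{X^{\mathrm T} X} \;\ge\; \frac{8}{\rho^2\, X^{\mathrm T} X} \;>\; 0,
\]
and only the threshold inherited from Lemma \ref{lm4} is needed.

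The substantive case is $s_2 \ge 2$, where $G$ and $G^*$ contain the same number of edges. With the natural identification of vertex sets, the transformation deletes $w_{s_2-1} w_{s_2}$ and adds $v_{s_1} w_{s_2}$. Taking $X$ as a test vector for $G^*$ reduces the Rayleigh bound to $\rho(G^*) - \rho \ge \tfrac{2\, x_{w_{s_2}}(x_{v_{s_1}} - x_{w_{s_2-1}})}{X^{\mathrm T} X}$, and this is the main obstacle: iterating the eigen-equation $\rho x_v = 2 + \sum_{v'\in H,\, v'\sim v} x_{v'}$ shows $x_{v_{s_1}} = \tfrac{2}{\rho} + \tfrac{2}{\rho^2} + O(\rho^{-3})$ (degree-$3$ endpoint) while $x_{w_{s_2-1}} = \tfrac{2}{\rho} + \tfrac{4}{\rho^2} + O(\rho^{-3})$ (degree-$4$ interior vertex when $s_2 \ge 3$), so the naive difference has the \emph{wrong} sign at order $\rho^{-2}$. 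My plan is to replace $X$ by a modified test vector $Y$ in which the coordinates at $v_{s_1}$ and $w_{s_2-1}$ are swapped --- precisely the two vertices whose interior/endpoint roles are interchanged when passing from $G$ to $G^*$. A direct bookkeeping then shows the new-edge and removed-edge contributions cancel exactly, and since $\|Y\|_2 = \|X\|_2$ one obtains
\[
\rho(G^*) - \rho \;\ge\; \frac{2\,(x_{w_{s_2-1}} - x_{v_{s_1}})(x_{v_{s_1-1}} - x_{w_{s_2-2}})}{X^{\mathrm T} X},
\]
whose sign is the product of two role-swap differences. In the generic range $s_1, s_2 \ge 3$ both factors are positive at order $\rho^{-2}$, immediately giving $\rho(G^*) > \rho$.

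The hardest step is handling the boundary cases where the second factor degenerates (for instance $s_2 = 2$, or when $s_2$ is large enough that $w_{s_2-2}$ is also interior and matches $v_{s_1-1}$ at leading order). In such sub-cases the plan is to iterate the swap into a multi-vertex chain along the two paths, using the geometric decay of the eigenvector coordinates by a factor of $\sim 2/\rho$ per step inwards from a path endpoint. This is precisely what forces the threshold $n \ge 10.2 \cdot 2^{s_2} + 2$: since $\rho \sim \sqrt{2n}$, one needs $\rho^{s_2} \gg 2^{s_2}$ to preserve the correct sign through $s_2$ iterations of the eigen-equation. Combining the two cases then completes the proof.
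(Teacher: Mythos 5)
Your proposal has a genuine gap at its central step (note also that the paper itself does not prove Lemma \ref{lm6}; it imports it from \cite{X.L. Wang}, so your argument has to stand on its own). The swapped test vector $Y$ does yield, correctly, $\rho(K_2\vee H^*)-\rho\geq\frac{2}{X^{\mathrm T}X}\,\bigl(x_{w_{s_2-1}}-x_{v_{s_1}}\bigr)\bigl(x_{v_{s_1-1}}-x_{w_{s_2-2}}\bigr)$, but the claim that both factors are positive at order $\rho^{-2}$ throughout the range $s_1\geq s_2\geq 3$ is false. Iterating the eigen-equations, a path vertex at distance $d$ from its nearer endpoint has entry $\tfrac2\rho+\tfrac2{\rho^2}+O(\rho^{-3})$ for $d=0$, $\tfrac2\rho+\tfrac4{\rho^2}+\tfrac6{\rho^3}+O(\rho^{-4})$ for $d=1$, and $\tfrac2\rho+\tfrac4{\rho^2}+\tfrac8{\rho^3}+O(\rho^{-4})$ for $d=2$. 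Hence for $s_2\geq4$ both $v_{s_1-1}$ and $w_{s_2-2}$ are interior, so your second factor vanishes at order $\rho^{-2}$; and for $s_2\geq5$ the vertex $w_{s_2-2}$ lies at distance $2$ from the nearer end of its path while $v_{s_1-1}$ lies at distance $1$, so $x_{w_{s_2-2}}-x_{v_{s_1-1}}\approx 2/\rho^{3}>0$ and your lower bound is \emph{negative}. For $s_2=2$ the first factor already has the wrong sign (for $s_1\geq3$ one gets $x_{v_{s_1}}-x_{w_{1}}\approx 2/\rho^{3}>0$), and in symmetric cases such as $s_1=s_2$ the bound degenerates to $0$, so strictness is not obtained. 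In effect the single-swap Rayleigh argument covers only a narrow band of cases (essentially $s_2\in\{1,3\}$ and some $s_2=4$ cases with $s_1>s_2$), not the general statement.

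The step you defer --- the ``multi-vertex chain'' of swaps --- is precisely the heart of the lemma: once $s_2\geq4$ the decisive eigenvector differences live deep inside the two paths and are of size $\rho^{-\Theta(s_2)}$, which is exactly why the hypothesis carries the factor $2^{s_2}$; establishing the correct sign there requires a quantitative comparison of the Perron entries along both paths (this is what the cited source does), and it does not follow from the mere observation that entries decay geometrically from the ends. As written, your plan proves the easy cases and gestures at the hard one. A smaller, fixable issue: Lemmas \ref{lm4} and \ref{lm5} are stated for the connected extremal graph in $\mathrm{SPEX}_{\mathcal{P}}(n,F)$, so they cannot be invoked verbatim for an arbitrary $K_2\vee H$; you need separate (easy) arguments that the two join vertices of $K_2\vee H$ carry the maximal Perron entries, normalized to $1$, and that all path vertices satisfy the $2/\rho$-type bounds before any of the above estimates can be used.
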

\vspace*{1mm}
\begin{proof}[\textbf{Proof of Theorem \ref{thm3}}]
Let $G$ be the extremal graph with $\textit{spex}_{\mathcal{P}}(n,P_{t\cdot l})$.
Noting that $F\in\{P_{t\cdot l}|t = 1, l \geq 6\}$ or $F\in\{P_{t\cdot l}|t = 2, l \geq 4\}$ or $F\in\{P_{t\cdot l}|t \geq 3, l \geq 3\}$ is not subgraph of $K_{2,n-2}$,
then $G$ contains a copy of $K_{2,n-2}$ for large enough $n$.
\begin{claiim}\label{claim4.1}
The vertices $u'$ and $u''$ are adjacent. Furthermore, the subgraph $G[R]$ of $G$ induced by $R$ is a disjoint union of some paths.  
\end{claiim}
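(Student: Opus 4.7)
The plan is to prove $u'u'' \in E(G)$ by contradiction; once this is established, the second part of the claim follows immediately from Lemma \ref{lm4}(ii). So the real work lies in ruling out the case $u'u'' \notin E(G)$.

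Suppose for contradiction that $u'u'' \notin E(G)$. By Lemma \ref{lm4}(ii), there are two sub-cases for the structure of $G[R]$: either $G[R]$ is a disjoint union of paths, or $G[R] \cong C_{n-2}$. In either sub-case I will exhibit a planar $P_{t\cdot l}$-free graph $G^*$ with $\rho(G^*) > \rho(G)$, contradicting the extremality of $G$.

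In the first sub-case, define $G^* := G + u'u''$. Since $G[R]$ is acyclic and $u',u''$ can be placed on opposite sides of the $K_{2,n-2}$ drawing, an outer face contains both, so $G^*$ is planar. Using $x_{u'}=x_{u''}=1$ together with the Rayleigh quotient yields immediately $\rho(G^*)>\rho(G)$. In the second sub-case, pick any edge $vw$ of the cycle $C_{n-2}$ and define $G^* := G - vw + u'u''$; removing a single edge of the cycle restores acyclicity in $G[R]$, so the same planar embedding argument applies. By Lemma \ref{lm5},
$$x_v x_w \;\leq\; \left(\frac{2}{\rho}+\frac{4.496}{\rho^{2}}\right)^{2} \;<\; 1 \;=\; x_{u'}x_{u''}$$
for $n$ (hence $\rho$) sufficiently large, whence
$$\rho(G^*)-\rho(G)\;\geq\;\frac{2}{X^{\mathrm T}X}\bigl(x_{u'}x_{u''}-x_{v}x_{w}\bigr)\;>\;0.$$

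The main obstacle is to verify that in each sub-case $G^*$ remains $P_{t\cdot l}$-free. The strategy is a rerouting argument that exploits the enormous disparity between $|R|=n-2$ and $|V(P_{t\cdot l})|=t(l-1)+1$. Any hypothetical copy $T \subseteq G^*$ of $P_{t\cdot l}$ must use the new edge $u'u''$ (otherwise $T \subseteq G$, a contradiction); by a case analysis on whether the center of $T$ is $u'$, $u''$, or a vertex of $R$, one replaces the segment of $T$ containing $u'u''$ by an alternative route through a vertex of $R$ not already used by $T$ (such a vertex exists because $R$ is very large). This yields a copy of $P_{t\cdot l}$ already present in $G$, contradicting $P_{t\cdot l}$-freeness of $G$. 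Combined with the spectral gain above, this closes both sub-cases and forces $u'u'' \in E(G)$; Lemma \ref{lm4}(ii) then finishes the claim.
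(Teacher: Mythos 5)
Your proposal is correct in substance, but it takes a genuinely different route from the paper's proof. The paper also argues by contradiction from $u'u''\notin E(G)$, but it never splits into the path/cycle dichotomy of Lemma \ref{lm4}(ii); instead it picks a maximum family of at most $t-1$ independent $(l-1)$-paths in $G[R]$ (more would already give a copy of $P_{t\cdot l}$ centred at $u'$), sets $G'=G+u'u''-E(IP)$, asserts that $G'$ is planar and $P_{t\cdot l}$-free, and then uses Lemma \ref{lm5} to check that the gain $x_{u'}x_{u''}=1$ beats the loss of at most $(t-1)(l-2)$ deleted edges, each worth at most $\left(\frac{2}{\rho}+\frac{4.496}{\rho^2}\right)^2$. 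You instead keep (almost) all of $G$ and show that adding $u'u''$ alone cannot create $P_{t\cdot l}$: any copy through $u'u''$ can have that edge subdivided by a fresh vertex $w\in R\setminus V(T)$ (which exists since $|R|=n-2\gg t(l-1)+1$), and a subdivision of the spider $P_{t\cdot l}$ still contains $P_{t\cdot l}$, so the copy would already live in $G$. This buys a cleaner spectral step (the gain is trivially positive, no counting of $(t-1)(l-2)$ terms and no extra lower bound on $n$ for this estimate), and it makes explicit the $P_{t\cdot l}$-freeness and planarity facts that the paper dismisses with ``clearly''; conversely, the paper's edge-deletion avoids the rerouting argument altogether. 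Two small points to tighten in your write-up: the planarity justification in the first sub-case is loose as phrased --- the clean statement is that $G+u'u''=K_2\vee G[R]$ is a subgraph of the planar graph $K_2\vee P_{n-2}$ when $G[R]$ is a disjoint union of paths; and in your rerouting step you should state explicitly that subdividing one edge of $P_{t\cdot l}$ yields a starlike tree with one branch lengthened, which contains $P_{t\cdot l}$ after trimming that branch. Also note the cycle sub-case is in fact vacuous for large $n$, since $C_{n-2}$ already contains $t$ disjoint $(l-1)$-vertex paths and hence $G$ would contain $P_{t\cdot l}$ centred at $u'$; handling it as you do is harmless but unnecessary.
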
  
  
\begin{proof}  
Suppose to the contrary that $u'u'' \notin E(G)$. Then we assert that $G[R]$ contains at most $t - 1$ independent $(l - 1)$-paths.
If not, $G$ would contain $t$ edge-disjoint $P_l$ that intersect in one common vertex $u'$, which is impossible as $G$ is $P_{t\cdot l}$-free. We denote the edges of these independent $(l - 1)$-paths by $E(IP)$.
Define $G' = G + u'u'' - \sum_{u_iu_j \in E(IP)} u_iu_j$. Clearly, $G'$ is planar and $P_{t\cdot l}$-free. By Lemma \ref{lm5}, we have  
\[  
\begin{aligned}  
\rho(G') - \rho &\geq \frac{2}{X^TX} \left( x_{u'}x_{u''} - \sum_{u_iu_j \in E(IP)} x_{u_i}x_{u_j} \right) \\  
&\geq \frac{2}{X^TX} \left( 1 - (t - 1)(l - 2) \cdot \left( \frac{2}{\rho} + \frac{4.496}{\rho^2} \right)^2 \right) \\  
&\geq \frac{2}{X^TX} \left( 1 - (t - 1)(l - 2) \cdot \left( \frac{2}{\sqrt{2n - 4}} + \frac{4.496}{2n - 4} \right)^2 \right) \\  
&> 0,  
\end{aligned}  
\]  
as $n \geq \max\{2.67 \times 9^{17}, 10.2 \times 2^{(t - 1)(l - 1) + l - 3} + 2\}$ and $\rho \geq \rho(K_{2,n-2})=\sqrt{2n-4}$ (since $K_{2,n-2}$ is a $P_{t\cdot l}$-free planar graph).
Hence, $\rho(G') > \rho$, contrary to the maximality of $\rho = \rho(G)$. Therefore, $u'u'' \in E(G)$.
From Lemma \ref{lm4}, we establish that the subgraph $G[R]$ of $G$ induced by $R$ is a disjoint union of some paths.  
\end{proof}

By Claim \ref{claim4.1} and Lemma \ref{lm4}, $G\cong K_{2} \vee G[R]$ and $G[R]$ is a disjoint union of paths.
Suppose that $G[R]=\cup_{i=1}^qP_{n_i}$, where $q\geq2$ and $n_1\geq n_2\geq\cdots\geq n_q$.
Let $H$ be a disjoint union of $q$ paths. We use $n_i(H)$ to denote the order of the $i$-th longest path of $H$ for any $i\in\{1,...,q\}.$ 

(i) For $t=1$ and $l \geq 6$, we have the following claims.

\begin{claiim}\label{claim4.2}  
If $H$ is a disjoint union of paths, then $K_2 \vee H$ is $P_{l}$-free if and only if $n_1(H) +n_2(H) +n_3(H) \leq l-3$.  
\end{claiim}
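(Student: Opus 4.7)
The claim is the planar analogue of Claim \ref{claim3.1}, except that the join is with $K_2$ rather than $K_1$. Denote the two apex vertices by $u'$ and $u''$. The essential observation is that in $K_2\vee H$ every vertex of $H$ is adjacent to both apex vertices, so $u'$ and $u''$ can serve as two ``bridges'' stitching together up to three path components of $H$ into a single long simple path. Everything in the proof flows from correctly quantifying this.

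For the ``if'' direction I would argue the contrapositive: assume $n_1(H)+n_2(H)+n_3(H)\geq l-2$ and exhibit a $P_l$. Let $P^{(1)},P^{(2)},P^{(3)}$ be the three longest components of $H$. Traverse $P^{(1)}$ from one endpoint to the other, step to $u'$, traverse $P^{(2)}$ end-to-end, step to $u''$, then traverse $P^{(3)}$. This yields a simple path on exactly $n_1(H)+n_2(H)+n_3(H)+2$ vertices. By shortening the traversals of the endpoint components, one obtains a copy of $P_i$ for every $3\leq i\leq n_1(H)+n_2(H)+n_3(H)+2$, hence a $P_l$.

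For the ``only if'' direction, take any simple path $P$ in $K_2\vee H$ and delete the apex vertices $u',u''$ that occur in $P$ (at most one occurrence of each, since $P$ is simple). This splits $P$ into at most three subpaths, each of which lies entirely inside $H$. The key bookkeeping is that the total number of vertices of $H$ used by $P$ is at most $n_1(H)+n_2(H)+n_3(H)$: if the three pieces lie in three distinct path components of $H$, this is immediate; if two or three of them lie in a common component of $H$ they are forced to be vertex-disjoint subpaths of that single path, so their contribution is already absorbed into one $n_i(H)$. Adding back at most two apex vertices gives $|V(P)|\leq n_1(H)+n_2(H)+n_3(H)+2$, so under the hypothesis $n_1(H)+n_2(H)+n_3(H)\leq l-3$ every simple path has at most $l-1$ vertices and $K_2\vee H$ is $P_l$-free.

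The argument is short and essentially structural; the only step worth pausing over is the bookkeeping above, which rules out the \emph{a priori} worry that a path could repeatedly re-enter the same component of $H$ through $u'$ and $u''$ and thereby beat the $n_1(H)+n_2(H)+n_3(H)+2$ bound. Simplicity of the path forces any pieces inside a common component to be vertex-disjoint subpaths of that component, so no extra vertices are gained, and the bound is tight by the explicit construction in the ``if'' direction.
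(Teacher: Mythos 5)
Your proof is correct and follows essentially the same route as the paper: the longest path in $K_2 \vee H$ has exactly $n_1(H)+n_2(H)+n_3(H)+2$ vertices, and paths of all shorter orders are present, which gives the equivalence. The only difference is that you make rigorous, via the delete-the-apices bookkeeping, the upper bound that the paper simply states as an observation, so there is no gap in your argument.
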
 

\begin{proof}  
One can observe that the length of the longest path in $K_2 \vee H$ is $n_1(H) +n_2(H) + n_3(H) + 2$, and $K_2 \vee (P_{n_1(H)} \cup P_{n_2(H)} \cup P_{n_3(H)})$ contains a path $P_i$ for each $i \in \{3, \ldots, n_1(H) +n_2(H) + n_3(H) + 2\}$.  
Thus, $n_1(H) +n_2(H) + n_3(H) + 2 \leq l - 1$ if and only if $K_2 \vee H$ is $P_l$-free, 
which the claim proves.  
\end{proof}  
  
By Claim \ref{claim4.2} and direct computation, we have $n_2 \leq n_1 \leq l - 3$ in $G[R]$, and so $10.2 \times 2^{l-3} +2 \geq 10.2 \times 2^{n_2}+ 2$. Thus,  
\begin{equation}\label{gongshin}  
n \geq \max\{2.67 \times 9^{17},  \frac{10}{9}|V(F)|, 10.2 \times 2^{l-3}+2\}.  
\end{equation}  
  
\begin{claiim}\label{claim4.3}  
$n_1 +n_2+n_3 = l - 3$.  
\end{claiim}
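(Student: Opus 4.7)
\textbf{Proof proposal for Claim \ref{claim4.3}.} The plan is to argue by contradiction, in direct analogy with Claim \ref{claim3.2} from the outerplanar setting. By Claim \ref{claim4.2} we already have $n_1+n_2+n_3\leq l-3$, so I would assume toward a contradiction that $n_1+n_2+n_3\leq l-4$ and derive a strictly better $P_l$-free planar graph by means of the planar path transformation of Definition \ref{de2}.

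Concretely, I would let $H'$ be the $(n_1,n_q)$-transformation of $G[R]$, which either lengthens $P_{n_1}$ by one while shortening the shortest path $P_{n_q}$ by one (if $n_q\geq 2$) or merges them into a single path $P_{n_1+n_q}$ (if $n_q=1$). In either case $n_1(H')=n_1+1$, while the paths of intermediate lengths are untouched, so $n_2(H')=n_2$ and $n_3(H')=n_3$. Consequently,
\[
n_1(H')+n_2(H')+n_3(H')\leq l-3,
\]
and Claim \ref{claim4.2} then guarantees that $K_2\vee H'$ is still $P_l$-free, hence planar and admissible.

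To finish, I observe that since every $n_i\leq l-3$ we have $n_q\leq l-3$, so the running assumption $n\geq 10.2\times 2^{l-3}+2\geq 10.2\times 2^{n_q}+2$ lets me invoke Lemma \ref{lm6} to conclude
\[
\rho(K_2\vee H')>\rho(K_2\vee G[R])=\rho(G),
\]
which contradicts the maximality of $\rho$.

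The only delicate point is the bookkeeping step $n_2(H')=n_2$ and $n_3(H')=n_3$: this requires enough paths in $G[R]$ that the transformation does not disturb the top three lengths, which is automatic from $n-2=\sum_{i=1}^q n_i\leq q(l-3)$ together with the lower bound on $n$ (forcing $q$ to be much larger than $3$). Beyond that, the argument is a routine application of Definition \ref{de2} and Lemma \ref{lm6}, and I foresee no substantive obstacle.
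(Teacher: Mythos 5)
Your proposal is correct and follows essentially the same route as the paper: assume $n_1+n_2+n_3\leq l-4$, apply the $(n_1,n_q)$-transformation, use Claim \ref{claim4.2} to see the result is still $P_l$-free, and invoke Lemma \ref{lm6} (valid since $n_q\leq l-3$ and $n\geq 10.2\times 2^{l-3}+2$) to contradict maximality. In fact your write-up is cleaner than the paper's, which contains typos ($l-2$ for $l-3$, "$\leq l-3$" where "$\leq l-4$" is meant), and your remark that the shortest path being consumed does not hurt the bound is handled correctly.
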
  
  
\begin{proof}  
By Claim \ref{claim4.2}, we already know that $n_1+n_2+n_3 \leq l-2 $ in $G[R]$. Now suppose to the contrary that $n_1+n_2+n_3\leq l-3.$ Let $H^{\prime}$ be an $(n_1,n_t)$-transformation of $G[R].$ Clearly, $n_1(H^{\prime})=n_1+1$, $n_2(H^{\prime})=n_2$ and $n_3(H^{\prime})=n_3.$ Then, $n_1(H^{\prime})+n_2(H^{\prime})+n_2(H^{\prime})\leq l-3.$ By Claim 1, $K_2 \vee H^{\prime}$ is $P_l$-free. However, by (\ref{gongshin}) and Lemma \ref{lm6}, we have $\rho(K_2 \vee H^{\prime})>\rho$, contradicting by the maximality of $\rho = \rho(G)$. Hence, the claim holds.
\end{proof}

For integers $c$ and $d$, let $l-3 = 3c + d$ where $c \geq 1$ and $d= 0,1,2$.

\begin{claiim}\label{claim4.4}  
$n_i = c$ for $i=\{2,\ldots,q-1\}.$  
\end{claiim}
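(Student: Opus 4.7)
The plan is to adapt the strategy of the proof of Claim~\ref{claim3.3} for the outerplanar case, with modifications that reflect the planar $P_l$-freeness criterion from Claim~\ref{claim4.2} constraining the \emph{top three} path orders $n_1+n_2+n_3\le l-3$ rather than only the top two. The argument splits into two phases: first establish interior uniformity $n_3=n_4=\cdots=n_{q-1}=n_2$, and then pin down the common value to $c$.

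For interior uniformity, the first step is to rule out $n_3<n_2$. The outerplanar proof would apply an $(n_{i_0},n_q)$-transformation with $i_0=3$, but under the planar constraint this would push the top three sum from $l-3$ to $l-2$ and violate Claim~\ref{claim4.2}. Instead I will apply the $(n_1,n_2)$-transformation of Definition~\ref{de2}; because $n_2-1\ge n_3$, the sorted top three of the resulting multiset are $(n_1+1,\,n_2-1,\,n_3)$ with the same sum $l-3$, so $K_2\vee H'$ remains $P_l$-free while Lemma~\ref{lm6} gives $\rho(K_2\vee H')>\rho$, contradicting maximality. Hence $n_3=n_2$. Once $n_3=n_2$ is secured, any deviation $n_{i_0}<n_2$ with $i_0\in\{4,\ldots,q-1\}$ is defeated by the standard $(n_{i_0},n_q)$-transformation, which now leaves all of $n_1,n_2,n_3$ intact. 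Set $m:=n_2=n_3=\cdots=n_{q-1}$.

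For the exact value, Claim~\ref{claim4.3} combined with $n_2=n_3=m$ gives $n_1+2m=3c+d$ with $n_1\ge m$, forcing $m\le c$. Assume for contradiction that $m\le c-1$; then $n_1\ge c+d+2\ge m+3$. I will build a $P_l$-free planar graph $G'$ with $\rho(G')>\rho$ via the following two-step redistribution, patterned on the second half of the proof of Claim~\ref{claim3.3}. First, writing $P^1=v_1v_2\cdots v_{n_1}$, delete the two edges $v_1v_2$ and $v_{n_1-1}v_{n_1}$, which shrinks $P^1$ to $P_{n_1-2}$ and frees the two endpoint vertices $v_1,v_{n_1}$. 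Second, dissolve one interior path $P^j=w_1w_2\cdots w_m$ (for some $j\in\{3,\ldots,q-1\}$) by deleting all $m-1$ of its edges, and attach each of the $m+2$ freed vertices $\{v_1,v_{n_1},w_1,\ldots,w_m\}$ to the endpoint of a distinct remaining interior path of order $m$, extending it to $P_{m+1}$. The lower bound $n\ge\frac{625}{32}\lfloor\frac{l-3}{3}\rfloor^2+2$ ensures $q-3\ge m+2$, so enough interior $P_m$'s are available. The net edge change is $(m+2)-(m+1)=+1$, and the new top three orders are $(n_1-2,\,m+1,\,m+1)$ (using $n_1-2\ge m+1$) with sum $n_1+2m=l-3$, so $G'$ is $P_l$-free by Claim~\ref{claim4.2}. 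Applying Lemma~\ref{lm5} gives $\tfrac{4}{\rho^2}\le x_ux_v<\tfrac{4}{\rho^2}+\tfrac{19}{\rho^3}$ for all $u,v\in R$, and hence
\[
\rho(G')-\rho\ge\frac{X^T(A(G')-A(G))X}{X^TX}>\frac{2}{X^TX}\left(\frac{4}{\rho^2}-\frac{19(m+1)}{\rho^3}\right)>0,
\]
the last inequality following from $\rho\ge\sqrt{2n-4}$ and $m+1\le c\le\lfloor\frac{l-3}{3}\rfloor$ together with the stated lower bound on $n$. This contradicts maximality of $\rho=\rho(G)$, forcing $m=c$.

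The main obstacle is Phase 2: the single-vertex shrinkage of $P^1$ that sufficed in outerplanar Claim~\ref{claim3.3} would here leave the new top three summing to $l-2$, violating Claim~\ref{claim4.2}. The double-vertex shrinkage is essential, as it exactly absorbs the $+1$ contributed by each of the two new $P_{m+1}$'s that enter the top three, so that the binding equality $n_1+n_2+n_3=l-3$ survives the restructuring. A secondary subtlety is the $n_3<n_2$ subcase of Phase 1, where the customary $(n_{i_0},n_q)$-transformation is unavailable and must be replaced by the $(n_1,n_2)$-transformation above.
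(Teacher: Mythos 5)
Your proposal is correct and follows the same two--stage strategy as the paper (path transformations to force interior uniformity, then a vertex--redistribution argument with the eigenvector bounds of Lemma \ref{lm5} to pin down the common value), but your execution of the second stage genuinely differs from the paper's and is in fact the sounder one. The paper copies the outerplanar argument of Claim \ref{claim3.3} almost verbatim: it moves a single endpoint off $P^1$ and dissolves one interior $P_{n_2}$, arriving at $K_2\vee\bigl(P_{n_1-1}\cup(n_2+1)P_{n_2+1}\cup(q-n_2-4)P_{n_2}\cup P_{n_q}\bigr)$, whose three longest paths sum to $n_1+2n_2+1=l-2$; by the paper's own Claim \ref{claim4.2} such a graph is \emph{not} $P_l$-free, so the paper's claimed contradiction does not go through as written. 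Your double-vertex shrinkage (deleting both end-edges of $P^1$, so $m+1$ edges out and $m+2$ edges in) keeps the top-three sum at exactly $n_1+2m=l-3$, hence produces a legitimately $P_l$-free planar comparison graph, and the estimate $\frac{4}{\rho^2}-\frac{19(m+1)}{\rho^3}>0$ holds under the stated bound since $\rho\ge\sqrt{2n-4}\ge\frac{25}{4}\lfloor\frac{l-3}{3}\rfloor\ge\frac{25}{4}(m+1)$. Your reordering of the uniformity phase (first $n_2=n_3$ via the $(n_1,n_2)$-transformation of Definition \ref{de2}, then the $(n_{i_0},n_q)$-transformations for $i_0\ge 4$) uses the same two moves as the paper in the opposite order and is equally valid. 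Two small points: the existence of $m+2$ spare interior paths of order $m$ follows from the huge bound $n\ge 2.67\times 9^{17}$ together with (\ref{gongshin}) (which forces $q$ to be enormous since $n-2\le n_1+(q-1)m$ with $n_1,m\le l-3$), not from the $\frac{625}{32}\lfloor\frac{l-3}{3}\rfloor^2+2$ bound you cite for it; and your constant $19$ in the product bound $x_ux_v<\frac{4}{\rho^2}+\frac{19}{\rho^3}$ needs $\rho$ moderately large, which the same size condition on $n$ supplies (the paper uses $25$ in the same role).
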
  
  
\begin{proof}  
First, we assert that $n_i=n_3$ for $i=\{4,\ldots,q-1\}.$ Suppose to the contrary, then set $i_0=\min\{i|$3$\leq i\leq t-1,n_i\leq n_3-1\}.$ Let $H^\prime$ be an $(n_{i_0},n_q)$-transformation of $G[R].$ Clearly, $n_1(H^{\prime})=n_1$, $n_2(H^{\prime})=n_2$ and $n_3(H^{\prime})=\max\{n_3,n_{i_0}+1\}=n_3$, and so $n_1(H^{\prime})+n_2(H^{\prime})+n_3(H^{\prime})=l-3.$ By Claim \ref{claim4.2}, $K_2 \vee H^{\prime}$ is $P_l$-free. However, by (\ref{gongshin}) and Lemma \ref{lm6}, $\rho(K_2 \vee H^{\prime})>\rho$, contradicting by the maximality of $\rho = \rho(G)$. 
Next, we assert that $n_2=n_3.$ we already know that $n_2 \geq n_3$. Now suppose to the contrary that $n_2 > n_3$, it means that $n_2 \geq n_3+1 $. Let $H^{\prime}$ be an $(n_1,n_2)$-transformation of $G[R].$ Clearly, $n_1(H^{\prime})=n_1+1$, $n_2(H^{\prime})=\max\{n_2-1,n_3\}$ and $n_3(H^{\prime})=n_3.$ Then, $n_1(H^{\prime})+n_2(H^{\prime})+n_2(H^{\prime}) \leq  l-3.$ By Claim \ref{claim4.2}, $K_2 \vee H^{\prime}$ is $P_l$-free. However, by (\ref{gongshin}) and Lemma \ref{lm6}, we have $\rho(K_2 \vee H^{\prime})>\rho$, contradicting by the maximality of $\rho = \rho(G)$.

It follows that $G \cong K_2 \vee (P_{n_1}\cup(q-2)P_{n_2}\cup P_{n_q}).$
Finally, we claim that $n_2=c.$ Note that $1\leq n_2 \leq \left\lfloor\frac{l-3}3\right\rfloor.$ Then, $n_2=\left\lfloor\frac{l-3}3\right\rfloor$ for $l\in\{6,7,8\}.$ It remains the case $l\geq9.$ Suppose to the contrary, then $n_2\leq\lfloor\frac{l-6}3\rfloor$ as $n_2<\lfloor\frac{l-3}{3}\rfloor.$
Using $P^i$ to denote the $i$-th longest path of $G[R]$ for $i\in\{1,...,q\}.$
Noting that $n-2 = \sum_{i=1}^{q} n_i \leq n_1 + (q-1)n_2$, which implies that $q \geq n_4 + 4$ by $n \geq 10.2\times2^{(t-1)l+l-3}+2 \gg (t-1)l + l-1 + \lceil\frac{l-3}{2}\rceil^2 + 4\lfloor\frac{l-3}{2}\rfloor$.
Then $P^2,P^3,...,P^{n_2+3}$ are paths of order $n_2.$
We may assume that $P^{n_2+3}=w_1w_2\cdots w_{n_2}.$ Since $n_1 \geq \lfloor\frac{l-3}3\rfloor \geq 2$, there exists an endpoint $w^{\prime\prime}$ of $P^1$ with $w^\prime w^{\prime\prime}\in E\left(P^1\right).$
Let $G^\prime$ be obtained from $G$ by

(1) deleting $w^\prime w^{\prime\prime}$ and joining $w^{\prime\prime}$ to an endpoint of $P^{n_2+2};$

(2) deleting all edges of $P^{n_2+3}$ and joining $w_i$ to an endpoint of $P^{i+1}$ for each $i\in\{1,...,n_2\}.$
Then, $G^\prime$ is obtained from $G$ by deleting $n_2$ edges and adding $n_2+1$ edges. By Lemma \ref{lm5}, we obtain
$$\frac4{\rho^2}\leq x_{u_i}x_{u_j}\leq\frac4{\rho^2}+\frac{24}{\rho^3}+\frac{36}{\rho^4}<\frac4{\rho^2}+\frac{25}{\rho^3}$$
for any vertices $u_i,u_j\in A.$ Then
$$\rho(G^{\prime})-\rho\geq\frac{X^T(A(G^{\prime})-A(G))X}{X^TX}>\frac2{X^TX}\biggl(\frac{4(n_2+1)}{\rho^2}-\frac{4n_2}{\rho^2}-\frac{25n_2}{\rho^3}\biggr)>0,$$
where $n_2<\lfloor\frac{l-3}3\rfloor\leq\frac4{25}\sqrt{2n-4}\leq\frac4{25}\rho$ as $n\geq\frac{625}{32}\lfloor\frac{l-3}3\rfloor^2+2$ and $\rho \geq \rho(K_{2,n-2})=\sqrt{2n-4}$ (since $K_{2,n-2}$ is a $P_l$-free planar graph for $l \geq 6$). So, $\rho(G^{\prime})>\rho.$
Meanwhile, $G^\prime\cong K_2 \vee (P_{n_1-1}\cup(n_2+1)P_{n_2+1}\cup(q-n_2-4)P_{n_2}\cup P_{n_t}).$ By Claim \ref{claim4.2}, $G^\prime$ is $P_{l}$-free, contradicting the fact that $G$ is extremal to $spex_{P}(n,P_l).$ So, the claim holds.
\end{proof}

Since $n_1 + n_2 +n_3 = l-3 = 3c+d$ and $n_2=c=\lfloor \frac{l-3}{3} \rfloor$, we have $n_1=c+d=l-3-2\lfloor\frac{l-3}{3}\rfloor$. Moreover, since $n_i=\lfloor \frac{l-3}{3} \rfloor$ for $i \in \{2, \ldots, t - 1\}$ and $n_t \leq n_2$, we can see that $G[R] \cong H_{\mathcal{P}}(l-3-2\lfloor\frac{l-3}{3}\rfloor,\lfloor\frac{l-3}{3}\rfloor)$. This completes the proof of Theorem \ref{thm3} (i).

(ii) For $t=2$ and $l \geq 3$, then $P_{2\cdot l} \cong P_{2l-1}$. Clearly, by Theorem \ref{thm3} (ii), we can directly conclude that $G \cong K_2 \vee H_{\mathcal{P}}(l-2,l-2)$.

(iii) For $t \geq 5$ and $l \geq 3$, it is not difficult to know that a \(P_{t\cdot l}\)-free graph is also \(B_{tl}\)-free. Meanwhile, as we know from \cite{X.L. Yin}, the spectral extremal \(B_{tl}\)-free planar graph that is $K_2 \vee H_{\mathcal{P}}(tl-t-l,l-2)$, and $K_2 \vee H_{\mathcal{P}}(tl-t-l,l-2)$ happens to be \(P_{t\cdot l}\)-free as well. Therefore, it is proven that the extremal graph of \(P_{t\cdot l}\)-free is $K_2 \vee H_{\mathcal{P}}(tl-t-l,l-2)$ for $t \geq 5$ and $l\geq 3$. 
\end{proof}

\vspace*{1mm}
Our complete proof of Theorem \ref{thm4} also needs an upper bound of the spectral radius of planar graphs as following.

\begin{lemma}\cite{M.N. Ellingham}.\label{lm8}
Let $G$ be a planar graph on $n\geq3$ vertices. Then $\rho\left(G\right)\leq 2+\sqrt{2n-6}.$
\end{lemma}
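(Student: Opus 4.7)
The plan is to prove the equivalent quadratic inequality $\rho(G)^2 - 4\rho(G) + 4 \leq 2n - 6$ by combining the eigenvalue equation at a Perron-maximizing vertex with Euler-type edge bounds on $G$ and its induced subgraphs.

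First, I would reduce to the connected case. If $G$ is disconnected, then $\rho(G)$ is realized on some component $G_j$ with $n_j\leq n$ vertices; $G_j$ is itself planar, and the right-hand side $2+\sqrt{2n-6}$ is nondecreasing in $n$, so proving the bound for connected $G$ suffices (tiny components with $n_j\leq 2$ give $\rho(G_j)\leq 1$ trivially). For connected planar $G$ on $n\geq 3$ vertices, let $\boldsymbol{x}$ be the positive Perron eigenvector of $A(G)$ associated with $\rho=\rho(G)$, normalized so that $x_{v^*}=\max_{v\in V(G)}x_v=1$ for some vertex $v^*$. The eigenvalue equation at $v^*$ gives $\rho x_{v^*}=\sum_{u\sim v^*}x_u$, and applying the equation once more yields
\[
\rho^2 \;=\; \rho^2 x_{v^*} \;=\; d(v^*) + \sum_{w\neq v^*} c(v^*,w)\,x_w,
\]
where $c(v^*,w)=|N(v^*)\cap N(w)|$ counts the common neighbors of $v^*$ and $w$.

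The combinatorial input is the planarity of $G$ and of its induced subgraphs. In particular, $|E(G)|\leq 3n-6$ and $e(G[N(v^*)])\leq 3d(v^*)-6$, combined with the identity $\sum_{w\neq v^*}c(v^*,w)=\sum_{u\sim v^*}(d(u)-1) = 2\,e(G[N(v^*)]) + e(N(v^*),V\setminus N[v^*])$, provide an initial control on $\rho^2$. However, naively bounding $x_w\leq 1$ in the second sum only yields $\rho^2 \leq d(v^*) + 2(3 d(v^*)-6) + (3n-6-d(v^*))$, giving $\rho=O(\sqrt{n})$ with a leading constant far too large for the target $\sqrt{2n}$.

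The main obstacle, and the crux of the argument, is sharpening the previous estimate to $\rho^2 - 4\rho \leq 2n - 10$. My plan is to work with $\rho^2-4\rho$ rather than $\rho^2$ alone and to use the first-order eigenvector relations $\rho x_u=\sum_{w\sim u}x_w$ for each $u\sim v^*$ to re-aggregate contributions: summing these over $u\sim v^*$ produces $\rho\sum_{u\sim v^*}x_u=\rho^2$, coupling the second-moment sum tightly to weighted edge counts issuing from $N(v^*)$. The subtracted term $-4\rho x_{v^*}=-4\sum_{u\sim v^*}x_u$ is tailored so that it cancels the dominant contributions from triangles through $v^*$ (each edge inside $G[N(v^*)]$ creates such a triangle and contributes to both $\rho^2 x_{v^*}$ and $\rho x_{v^*}$), while the remaining terms correspond to edges in $G-v^*$, whose count is bounded by $|E(G-v^*)|\leq 3(n-1)-6=3n-9$. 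Pushing through this cancellation carefully — this is the delicate bookkeeping step, and where the factor-of-$3$ improvement from $\sqrt{6n}$ to $\sqrt{2n}$ must be extracted — should yield $\rho^2-4\rho\leq 2n-10$, which is equivalent to the desired $\rho\leq 2+\sqrt{2n-6}$.
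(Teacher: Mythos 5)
This lemma is not proved in the paper at all: it is quoted verbatim from Ellingham and Zha (J. Combin. Theory Ser.~B 78 (2000) 45--56), so the only fair comparison is against that source, whose proof is exactly of the type you sketch. Your reduction to the connected case, the normalization $x_{v^*}=1$, the identity $\rho^2=d(v^*)+\sum_{w\neq v^*}c(v^*,w)x_w$, the count $\sum_{w\neq v^*}c(v^*,w)=2e(G[N(v^*)])+e(N(v^*),V\setminus N[v^*])$, and the reformulation as $\rho^2-4\rho\le 2n-10$ are all correct and are indeed the right skeleton. But the proposal stops precisely at the decisive step: ``pushing through this cancellation carefully \dots should yield $\rho^2-4\rho\le 2n-10$'' is an announcement, not an argument, and the cancellation as you describe it does not go through term by term. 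The triangle contributions you want to cancel equal $\sum_{v\in N(v^*)}\deg_{G[N(v^*)]}(v)\,x_v$, and individual vertices of the outerplanar graph $G[N(v^*)]$ can have degree much larger than $4$, so subtracting $4\sum_{v\sim v^*}x_v$ does not dominate them pointwise; one must instead invoke the global bounds $e(G[N(v^*)])\le 2d(v^*)-3$ (outerplanarity of a neighbourhood in a planar graph --- a fact you never state) and $e(N(v^*),V\setminus N[v^*])\le 2(n-2)-4$ for bipartite planar subgraphs, and then relate $d(v^*)$ to $\rho$ and control the eigenvector entries of second neighbours. That bookkeeping is the actual content of the Ellingham--Zha theorem and it is entirely absent here.

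A secondary point: your ``naive'' benchmark uses $e(G[N(v^*)])\le 3d(v^*)-6$, which already concedes too much; the improvement to the constant $2$ in $\sqrt{2n-6}$ hinges on the outerplanarity bound $2d(v^*)-3$ and on the bipartite planar bound, neither of which appears in the proposal. As written, the argument establishes nothing beyond $\rho=O(\sqrt{n})$ with the wrong constant, so the lemma is not proved. Since the paper uses this statement only as a black box, the honest fix is either to carry out the Ellingham--Zha charging argument in full or simply to cite it, as the paper does.
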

\vspace*{1mm}

\begin{proof}[\textbf{Proof of Theorem \ref{thm4}}]

Let $G$ be the extremal graph with $\textit{spex}_{\mathcal{P}}(n,tP_{l})$.
By Lemma \ref{lm4}, noting that $F\in\{tP_{l}|t = 1, l \geq 6\}$ or $F\in\{tP_{l}|t = 2, l \geq 4\}$ or $F\in\{tP_{l}|t \geq 3, l \geq 3\}$ is not subgraph of $K_{2,n-2}$,
then $G$ contains a copy of $K_{2,n-2}$.

(i) For $t=1$ and $l \geq 6$, the extremal graph be the same with SPEX$_{\mathcal{P}}(n,P_{1\cdot l})$. Therefore, we get Theorem \ref{thm4} (i).

(ii) For $t = 2$, $l \geq 4$ and $t \geq 3$, $l \geq 3$, we have the following claims.
\begin{claiiim}\label{claim5.1}  
The vertices $u'$ and $u''$ are adjacent. Furthermore, the subgraph $G[R]$ of $G$ induced by $R$ is a disjoint union of some paths.  
\end{claiiim}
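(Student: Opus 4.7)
The plan is to follow the blueprint of Claim~\ref{claim4.1} almost verbatim, replacing the combinatorial step (which for $P_{t\cdot l}$ counted vertex-disjoint $(l-1)$-paths in $G[R]$) by an analogous statement tailored to the vertex-disjoint forest $tP_{l}$. Once the adjacency $u'u''\in E(G)$ is established, the ``furthermore'' part follows immediately from Lemma~\ref{lm4}(ii).

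\emph{Step 1 (set up the contradiction).} Suppose $u'u''\notin E(G)$. As in Claim~\ref{claim4.1}, I aim to produce a planar $tP_{l}$-free graph $G'=G+u'u''-E(IP)$ with $\rho(G')>\rho$ for a carefully chosen edge set $E(IP)\subseteq E(G[R])$, contradicting the maximality of $\rho$.

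\emph{Step 2 (combinatorial core).} Because $u'$ and $u''$ are each adjacent to every vertex of $R$, any $P_{l}$-copy in $G+u'u''$ that uses the new edge has the form $u'v_{1}\cdots v_{k}u''w_{1}\cdots w_{l-k-2}$, where $v_{1}\cdots v_{k}$ and $w_{1}\cdots w_{l-k-2}$ are (possibly empty) paths inside $G[R]$; similarly an $l$-path using only $u'$ (resp.\ only $u''$) corresponds to an $(l-1)$-path in $G[R]$, and an $l$-path using neither is already an $l$-path in $G[R]$. Using the $tP_{l}$-freeness of $G$, one shows: $G[R]$ cannot contain $t$ vertex-disjoint $l$-paths; if it contains $t-1$ vertex-disjoint $l$-paths then $R$ minus their vertices contains no vertex-disjoint $(l-1)$-path; and no configuration of vertex-disjoint $l$- and $(l-2)$-paths in $G[R]$ can, after inserting $u'u''$, be completed to $tP_{l}$. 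I therefore take $E(IP)$ to be the edges of a maximal vertex-disjoint family of dangerous paths witnessing these bounds; an elementary count shows that the number of edges involved satisfies
\[
|E(IP)|\le tl-3,
\]
which matches the exponent in the hypothesis $n\ge 10.2\times 2^{tl-3}+2$. By the maximality of the family, $G'=G+u'u''-E(IP)$ is $tP_{l}$-free.

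\emph{Step 3 (planarity and spectral comparison).} Planarity of $G'$ is immediate because $G'\subseteq K_{2}\vee G[R]$, where $G[R]$ is a disjoint union of paths and cycles (Lemma~\ref{lm4}(ii)), and such graphs are planar. Applying Lemma~\ref{lm5} with $x_{u'}=x_{u''}=1$ yields
\[
\rho(G')-\rho\ge\frac{2}{X^{T}X}\Bigl(x_{u'}x_{u''}-\sum_{v_{i}v_{j}\in E(IP)}x_{v_{i}}x_{v_{j}}\Bigr)\ge\frac{2}{X^{T}X}\Bigl(1-(tl-3)\Bigl(\tfrac{2}{\rho}+\tfrac{4.496}{\rho^{2}}\Bigr)^{\!2}\Bigr).
\]
Combining $\rho\ge\rho(K_{2,n-2})=\sqrt{2n-4}$ with the assumption $n\ge 10.2\times 2^{tl-3}+2$ makes the parenthesised quantity strictly positive. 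Hence $\rho(G')>\rho$, contradicting the choice of $G$, so $u'u''\in E(G)$ after all.

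\emph{Main obstacle.} The delicate part is Step~2. In the $P_{t\cdot l}$ case one only had to bound vertex-disjoint $(l-1)$-paths in $G[R]$, since the $t$ branches share the single apex $u'$. For $tP_{l}$ a copy in $G+u'u''$ may use $0$, $1$, or $2$ of the apex vertices $u',u''$, so the forbidden configurations inside $G[R]$ are more varied and must be ruled out simultaneously. The challenge is to design the edge-removal set $IP$ so that it kills every such configuration while keeping $|E(IP)|\le tl-3$, which is exactly the size for which the spectral inequality in Step~3 goes through.
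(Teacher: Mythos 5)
Your overall route is the same as the paper's: toggle $u'u''$ on, delete a bounded set of edges inside $G[R]$, beat the loss with the gain $x_{u'}x_{u''}=1$ via Lemma~\ref{lm5}, and then read off the path structure from Lemma~\ref{lm4}; your Step~3 spectral estimate is in order. The genuine gap is Step~2, which you flag yourself but never close: you prove neither the bound $|E(IP)|\le tl-3$ nor the assertion that ``by the maximality of the family, $G'=G+u'u''-E(IP)$ is $tP_l$-free''. Maximality alone does not give this: deleting the edges of a maximal family of vertex-disjoint $l$-paths can leave new $l$-paths intact (in a component $P_{2l-1}$, deleting the edges spanned by the first $l$ vertices still leaves an $l$-path on the last $l$ vertices), so ``killing every dangerous configuration'' with a family of the claimed size is precisely the statement that needs an argument. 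In fact this step is much simpler than your case analysis suggests, and this is what the paper exploits (its $E(IP)$ is just the edge set of at most $t-1$ vertex-disjoint $l$-paths of $G[R]$, hence at most $(t-1)(l-1)$ edges): since $u'$ and $u''$ are complete to $R$, any $l$-path of $G'$ that uses the new edge $u'u''$ can be rerouted on the \emph{same} vertex set so as to avoid it (replace $a_1\cdots a_j\,u'\,u''\,b_1\cdots b_k$ by $a_1\cdots a_j\,u'\,b_1\cdots b_k\,u''$, or by $u''a_1\cdots a_ju'$ when $k=0$), so a $tP_l$ in $G'$ would already produce a $tP_l$ in $G$, contradicting $tP_l$-freeness of $G$; no delicate choice of $E(IP)$ is needed for $F$-freeness at all.

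A second concrete error is your planarity justification in Step~3: ``$G'\subseteq K_2\vee G[R]$, where $G[R]$ is a disjoint union of paths and cycles, and such graphs are planar'' is false when $G[R]$ contains a cycle, which Lemma~\ref{lm4}(ii) explicitly permits exactly in the situation you are in ($u'u''\notin E(G)$, with $G[R]\cong C_{n-2}$); indeed $K_2\vee C_{n-2}$ has $3n-5>3n-6$ edges. The edge deletion is really there to protect planarity, not $tP_l$-freeness. To repair this, either make sure the deleted set contains an edge of the cycle (the paper's choice does so automatically once $n-2\ge l$), or simply observe that $G[R]\cong C_{n-2}$ cannot occur here because $C_{n-2}$ itself contains $t$ vertex-disjoint $l$-paths as soon as $n\ge tl+2$, contradicting the $tP_l$-freeness of $G$; then $G[R]$ is a union of paths and $K_2\vee G[R]$ is planar. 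With these two points fixed, your argument coincides with the paper's proof of this claim (the analogue of Claim~\ref{claim4.1}), up to the harmless difference between your edge bound $tl-3$ and the paper's $(t-1)(l-1)$.
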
  
  
\begin{proof}  
Suppose to the contrary that $u'u'' \notin E(G)$. Then we assert that $G[R]$ contains at most $t-1$ independent $l$-paths.
If not, $G$ would contain $t$ vertex-disjoint $P_l$, which is impossible as $G$ is $tP_{l}$-free. We denote the edges of these independent $l$-paths by $E(IP)$.
Define $G' = G + u'u'' - \sum_{u_iu_j \in E(IP)} u_iu_j$. Clearly, $G'$ is planar and $tP_{l}$-free. By Lemma \ref{lm4}, we have  
\[  
\begin{aligned}  
\rho(G') - \rho &\geq \frac{2}{X^TX} \left( x_{u'}x_{u''} - \sum_{u_iu_j \in E(IP)} x_{u_i}x_{u_j} \right) \\  
&\geq \frac{2}{X^TX} \left( 1 - (t - 1)(l - 1) \cdot \left( \frac{2}{\rho} + \frac{4.496}{\rho^2} \right)^2 \right) \\  
&\geq \frac{2}{X^TX} \left( 1 - (t - 1)(l - 1) \cdot \left( \frac{2}{\sqrt{2n - 4}} + \frac{4.496}{2n - 4} \right)^2 \right) \\  
&> 0,  
\end{aligned}  
\]  
as $n \geq \max\{2.67 \times 9^{17}, 10.2 \times 2^{(t - 1)l + l - 3} + 2\}$ and $\rho \geq \rho(K_{2,n-2})=\sqrt{2n-4}$ (since $K_{2,n-2}$ is a $tP_l$-free planar graph for $l \geq 6$).
Hence, $\rho(G') > \rho$, contrary to the maximality of $\rho = \rho(G)$. Therefore, $u'u'' \in E(G)$.
From Lemma \ref{lm4}, we establish that the subgraph $G[R]$ of $G$ induced by $R$ is a disjoint union of some paths.  
\end{proof}

From the Claim \ref{claim5.1}, we can assume that $G[R]=\cup_{i=1}^qP_{n_i}$, where $q\geq2$ and $n_1\geq n_2\geq\cdots\geq n_q.$
Let $H$ be an union of disjoint paths.
Using $n_i(H)$ to denote the order of the $i$-th longest path of $H$ for any $i\in\{1,...,q\}.$ 

Now we are ready to complete the proof of Theorem \ref{thm4} (ii).
Let $H^* = H_{\mathcal{P}}(l-1, \lceil\frac{l-2}{2}\rceil, \lfloor\frac{l-2}{2}\rfloor)$. Clearly, $K_2 \vee H^* = K_2 \vee H_{\mathcal{P}}(l-1, \lceil\frac{l-2}{2}\rceil, \lfloor\frac{l-2}{2}\rfloor)$, which means that $n_1 = l-1$, $n_2 = \lceil\frac{l-2}{2}\rceil$ and $n_3 = \lfloor\frac{l-2}{2}\rfloor$.  
If $G[R] \cong H^*$, then $G \cong K_2 \vee H_{\mathcal{P}}(l-1, \lceil\frac{l-2}{2}\rceil,\lfloor\frac{l-2}{2}\rfloor)$, and we are done.  
If $G[R] \ncong H^*$, by Claim \ref{claim5.1} and Lemma \ref{lm4}, then $G[R]$ is a disjoint union of some paths.
Now, consider the quantity $r$ which represents the number of independent $l$-paths in $P_{n_1}$. 
Firstly, we can get $r=0$ for $l=4,5$ and $r \leq 1$ for $l \geq 6$. Otherwise, there would be a $2P_l$ in $G$, a contradiction.
Next, we will follow the following cases.
  
\begin{casee}\label{case4.1}  
$r = 0$ and $l \geq 4$. Then we have the following claims.

\begin{claiiim}\label{claim5.2}  
Let $H$ be an union of $q$ vertex–disjoint paths.
Then $K_2 \vee H$ does not contain $2P_l$ if and only if 
$n_1(H)+n_2(H)+n_3(H) \leq 2l-3$ and $n_3(H) +n_4(H) \leq l-2.$
\end{claiiim}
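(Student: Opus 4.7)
The plan is to prove both directions of the biconditional by carefully tracking how the apex vertices $u',u''$ (which are guaranteed by Claim \ref{claim5.1} to be adjacent and to each see all of $R$) are used by a hypothetical pair of vertex-disjoint $P_l$'s. For the direction "$K_2\vee H\not\supseteq 2P_l\Rightarrow$ inequalities'', I argue the contrapositive by explicit construction. If $n_3(H)+n_4(H)\ge l-1$, then also $n_1(H)+n_2(H)\ge l-1$, so I set $Q_1:=A_1-u'-B_1$ where $A_1,B_1$ are endpoint-anchored subpaths of $P_{n_1(H)}$ and $P_{n_2(H)}$ with $|A_1|+|B_1|=l-1$, and symmetrically $Q_2:=A_2-u''-B_2$ inside $P_{n_3(H)}\cup P_{n_4(H)}$; the four wings sit in four distinct components, so $Q_1\cup Q_2\cong 2P_l$. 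If instead $n_1(H)+n_2(H)+n_3(H)\ge 2l-2$, I use a three-component packing in which $P_{n_1(H)}$ is partitioned between $A_1$ and (part of) $A_2$, while $B_1\subseteq P_{n_2(H)}$ and $B_2\subseteq P_{n_3(H)}$, choosing the wing sizes to satisfy $|A_i|+|B_i|=l-1$; feasibility reduces to $n_1+n_2+n_3\ge 2l-2$.

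For the converse direction I argue by contradiction. Suppose $Q_1,Q_2$ are vertex-disjoint copies of $P_l$ in $K_2\vee H$. Because we are in Case \ref{case4.1} where $r=0$ and $n_1(H)\le l-1$, no single component of $H$ contains $P_l$; hence each of $Q_1,Q_2$ must pass through at least one of $u',u''$. If one $Q_i$ contained both apex vertices, then $Q_{3-i}\subseteq H$ would need some $n_j\ge l$, contradicting $r=0$. Therefore, after relabeling, $u'\in Q_1$ and $u''\in Q_2$, and I can write
\[
Q_1=A_1-u'-B_1,\qquad Q_2=A_2-u''-B_2,
\]
where each wing is a (possibly empty) subpath contained in a single component of $H$, the four wings are pairwise vertex-disjoint, and $|A_1|+|B_1|=|A_2|+|B_2|=l-1$.

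The remaining step is a case split on the number $k$ of distinct components of $H$ meeting $V(Q_1)\cup V(Q_2)$. If $k\le 3$, then selecting three components (possibly with a dummy of size $0$) that host all four wings, the total wing count gives
\[
2l-2=\sum_{i=1}^{2}(|A_i|+|B_i|)\le n_{i_1}+n_{i_2}+n_{i_3}\le n_1(H)+n_2(H)+n_3(H),
\]
contradicting the first hypothesis. If $k=4$, then $A_1,B_1$ lie in two distinct components and $A_2,B_2$ lie in two further distinct components, and the bin-packing constraints force both "pair sums'' to be $\ge l-1$. The hypothesis $n_3(H)+n_4(H)\le l-2$ is then invoked to eliminate this configuration.

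The main obstacle is exactly this $k=4$ subcase: although $n_3+n_4\le l-2$ rules out the pairing in which $Q_2$ is hosted by the two smallest components $P_{n_3(H)},P_{n_4(H)}$, one must argue that \emph{every} feasible pairing of four chosen components into two pair-sums of size $\ge l-1$ is incompatible with the two hypotheses taken together. The strategy is to show that among the four hosting components, sorted as $m_1\ge m_2\ge m_3\ge m_4$ with $m_i\le n_i(H)$, the inequality $m_3+m_4\le n_3+n_4\le l-2$ combined with $m_1+m_2+m_3\le n_1+n_2+n_3\le 2l-3$ leaves no pairing whose smaller pair sum reaches $l-1$; this uses the standard fact that pairing the largest with the smallest maximizes the minimum pair sum, reducing the problem to a short arithmetic check. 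Once the $k=4$ case is disposed of, the contradiction is complete and the "if'' direction follows.
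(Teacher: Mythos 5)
Your reduction is fine up to the point you yourself flag: with $n_1(H)\le l-1$ each copy of $P_l$ must use exactly one of $u',u''$, the wing decomposition is correct, and the $k\le 3$ count against $n_1(H)+n_2(H)+n_3(H)\le 2l-3$ works. But the ``short arithmetic check'' you defer the $k=4$ subcase to does not exist, because the two stated inequalities do \emph{not} exclude a four-component pairing with both pair sums equal to $l-1$. Concretely, take $l\ge 4$ and $H\supseteq P_{l-2}\cup P_{l-2}\cup P_1\cup P_1$, so that $n_1(H)=n_2(H)=l-2$ and $n_3(H)=n_4(H)=1$. Then $n_1(H)+n_2(H)+n_3(H)=2l-3$ and $n_3(H)+n_4(H)=2\le l-2$, so both hypotheses hold (and $n_1(H)\le l-1$, so the $r=0$ setting of Case \ref{case4.1} is respected), yet $K_2\vee H$ contains two vertex-disjoint $l$-paths: one $P_{l-2}$ joined through $u'$ to one $P_1$, and the other $P_{l-2}$ joined through $u''$ to the other $P_1$. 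Here the host sizes are exactly $(l-2,l-2,1,1)$ and the ``largest with smallest'' pairing realizes pair sums $l-1$ and $l-1$, so no arithmetic on $m_1\ge m_2\ge m_3\ge m_4$ can rule it out. In other words, the ``if'' direction of Claim \ref{claim5.2} is false as stated, and your $k=4$ case cannot be closed.

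For comparison, the paper's own proof follows the same outline but at the decisive moment simply asserts that, once no $P_l$ lies in $(P_{n_3(H)}\cup P_{n_4(H)})\vee\{u'\}$, the two paths ``must be in $(P_{n_1(H)}\cup P_{n_2(H)}\cup P_{n_3(H)})\vee\{u',u''\}$''; this is exactly the four-component configuration it overlooks, so the obstacle you isolated is genuine and not an artifact of your approach. A correct dividing line has to control that configuration as well: under the standing assumption $n_1(H)\le l-1$, the argument you and the paper both use actually shows that $K_2\vee H$ contains $2P_l$ if and only if $n_1(H)+n_2(H)+n_3(H)\ge 2l-2$, or $n_2(H)+n_3(H)\ge l-1$ together with $n_1(H)+n_4(H)\ge l-1$; equivalently, $2P_l$-freeness requires $n_1(H)+n_2(H)+n_3(H)\le 2l-3$ \emph{and} ($n_2(H)+n_3(H)\le l-2$ or $n_1(H)+n_4(H)\le l-2$). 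Consequently the subsequent steps that invoke Claim \ref{claim5.2} (Claims \ref{claim5.3}--\ref{claim4.5} and the transformation arguments there) need to be rechecked against such a corrected statement rather than against the inequalities as written.
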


\begin{proof}
We undertake the verification of the converse of Claim \ref{claim5.2}, that is, $K_2 \vee H$ contains $2P_l$ if and only if $n_3(H) +n_4(H) \geq l-1$ or $n_1(H) +n_2(H) +n_3(H) \geq 2l-2.$
Let $V(K_2)=\{u',u''\}$, $V(H)=\{v_i: 1 \leq i \leq|V(H)|\}$ and note that $n_1(H) \leq l-1.$

Let's first prove the sufficiency of the converse of Claim \ref{claim5.2}.
We assume that $K_2 \vee H$ contains a subgraph $2P_l$. Let $P^{(1)}$ and $P^{(2)}$ be the two $l$-paths of $2P_l.$ And note that $l-1 \geq n_1(H) \geq n_2(H) \geq \cdots \geq n_q(H)$, then $P_l$ is not subgraph of any $P_{n_i(H)}$ for $i \in \{1,2,\ldots,q\}.$
Two cases are considered:
(i) There exits $P_l$ is a subgraph of $(P_{n_i(H)} \cup P_{n_j(H)}) \vee \{u'\}$ (or $\{u''\}$) for $3 \leq i < j \leq q$, then $n_3(H) +n_4(H) \geq n_i(H) +n_j(H) \geq l-1.$ Furthermore, $n_1(H) + n_2(H) \geq n_3(H) +n_4(H) \geq l-1$, then we can easily find the other $P_l$ in $(P_{n_1(H)} \cup P_{n_2(H)}) \vee \{u''\}$ (or $\{u'\}$).
(ii) There is no $P_l$ in $(P_{n_3(H)} \cup P_{n_4(H)}) \vee \{u'\}$(or $\{u''\}$), then $n_3(H) +n_4(H) \leq l-2$. Recall that $l-1 \geq n_1(H) \geq n_2(H) \geq \cdots \geq n_q(H) \geq 1$, then there is no $2P_l$ in $(P_{n_1(H)} \cup P_{n_2(H)}) \vee \{u'\}$(or $\{u''\}$). Therefore, $P^{(1)}$ and $P^{(2)}$ must be in $(P_{n_1(H)} \cup P_{n_2(H)} \cup P_{n_3(H)}) \vee \{u',u''\}.$ Then $n_1(H) +n_2(H)+n_3(H) \geq 2l-2.$
And now we prove the necessity of the converse of Claim \ref{claim4.2}.
If $n_1(H) +n_2(H) \geq n_3(H) +n_4(H) \geq l-1$, then $K_2 \vee H$ contains two independent $P_l$ as subgraph, where one $P_l$ in $(P_{n_1(H)} \cup P_{n_2(H)}) \vee \{u'\}$ and the other in $(P_{n_3(H)} \cup P_{n_4(H)}) \vee \{u''\}.$
If $n_3(H) +n_4(H) \leq l-2$ but 
$n_1(H) + n_2(H) + n_3(H) \geq 2l-2,$ then $K_2 \vee H$ contains two independent $P_l$s as subgraph, where two $P_l$s both in $(P_{n_1(H)} \cup P_{n_2(H)} \cup P_{n_3}) \vee \{u',u''\}.$
Hence, the claim holds.
\end{proof}

\begin{claiiim}\label{claim5.3}  
$n_1=l-1.$
\end{claiiim}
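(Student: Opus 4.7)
The plan is to argue by contradiction: assume $n_1 \leq l-2$ and exhibit a transformation of $G[R]$ that remains $2P_l$-free but has strictly larger spectral radius, contradicting the extremality of $G$. By Claim \ref{claim5.2}, $G[R]$ satisfies the structural bounds $n_1 + n_2 + n_3 \leq 2l-3$ and $n_3 + n_4 \leq l-2$, and the argument splits on whether the first bound is slack or tight.

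In the slack case $n_1 + n_2 + n_3 \leq 2l-4$, I would apply the $(n_1,n_q)$-transformation of Definition \ref{de2}. The resulting $H'$ satisfies $n_1(H') = n_1+1 \leq l-1$, $n_2(H') = n_2$, and $n_3(H') \leq n_3$, so both bounds of Claim \ref{claim5.2} persist and $r$ remains $0$. Hence $K_2 \vee H'$ is planar and $2P_l$-free, and since $n \geq N_0 \geq 10.2 \cdot 2^{n_q}+2$, Lemma \ref{lm6} gives $\rho(K_2 \vee H') > \rho(G)$, the desired contradiction. In the tight case $n_1 + n_2 + n_3 = 2l-3$, a direct $(n_1,n_q)$-swap overshoots the first bound by one, so I would pair $P_{n_1}$ with an intermediate path. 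If $n_2 > n_3$, the $(n_1,n_2)$-transformation produces top three lengths $n_1+1,\, n_2-1,\, n_3$, again summing to $2l-3$. If $n_2 = n_3$ but $n_3 > n_4$, the $(n_1,n_3)$-transformation applied to a single $P_{n_3}$-component produces $n_1+1,\, n_2,\, n_3-1$, again summing to $2l-3$ with $n_3(H')+n_4(H') \leq l-3$. In either case $n_1+1 \leq l-1$ preserves $r=0$, Claim \ref{claim5.2} certifies $2P_l$-freeness, and Lemma \ref{lm6} delivers the contradiction.

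The only remaining tight configuration to exclude is $n_2 = n_3 = n_4$, which must be ruled out for the preceding dichotomy to be exhaustive. Here $n_1 + 2n_3 = 2l-3$ combined with $2n_3 \leq l-2$ forces $n_1 = 2l-3-2n_3 \geq l-1$, directly contradicting our assumption $n_1 \leq l-2$. The main obstacle is precisely recognizing this flat-and-tight pathology and dispatching it by arithmetic rather than by a transformation; once it is dismissed, everything reduces to routine applications of the transformation lemma together with the structural characterization in Claim \ref{claim5.2}.
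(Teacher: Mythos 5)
Your proof is correct and takes essentially the same route as the paper: assume $n_1\leq l-2$, apply a spectral-radius-increasing path transformation (Definition \ref{de2}, Lemma \ref{lm6}), and certify $2P_l$-freeness of the transformed graph via Claim \ref{claim5.2}. The paper is just more economical, using a single $(n_1,n_2)$-transformation and checking both inequalities of Claim \ref{claim5.2} directly from $n_1\leq l-2$ and $n_3+n_4\leq l-2$, which absorbs your slack/tight dichotomy and the $n_2=n_3=n_4$ arithmetic subcase.
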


\begin{proof}
By Claim \ref{claim5.2}, we already know that $n_1 \leq l-1$ as $r=0$. Suppose to the contrary that $n_1 < l-1$, it means that $n_1 \leq l-2.$
Let $H^{\prime}$ be an $(n_1,n_2)$-transformation of $G[R].$ Clearly, $n_1(H^{\prime})=n_1+1$, $n_2(H^{\prime})= max\{n_2-1, n_3\}$, $n_3(H^{\prime}) = min\{n_2-1, n_3\}$. Note that $n_4 \leq n_3 \leq n_2 \leq n_1 \leq l-2$, then we get $n_1(H^{\prime})+n_2(H^{\prime}) +n_3(H^{\prime}) \leq 2l-3$ and $n_3(H^{\prime}) +n_4(H^{\prime}) \leq l-2.$ By Claim \ref{claim5.2}, $K_2 \vee H^{\prime}$ is $2P_l$-free.
As $n \geq 10.2 \times 2^{l-1} + 2 \geq 10.2 \times 2^{s_2} + 2$, we conclude that $\rho < \rho(K_2 \vee H^{\prime})$ by Lemma \ref{lm6},  
which is impossible by the maximality of $\rho = \rho(G)$. 
\end{proof}

\begin{claiiim}\label{claim5.4}  
$n_2 +n_3 = l - 2$.  
\end{claiiim}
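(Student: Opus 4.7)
The plan is to establish $n_2 + n_3 = l-2$ in two directions, getting the upper bound immediately from the tools already built, and handling the lower bound by a path transformation combined with the spectral-monotonicity lemma.

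For the upper bound, Claim \ref{claim5.3} gives $n_1 = l-1$, so the first inequality in Claim \ref{claim5.2} applied to $H = G[R]$ (which satisfies it because $G = K_2 \vee G[R]$ is $2P_l$-free) becomes $(l-1) + n_2 + n_3 \leq 2l - 3$, i.e.\ $n_2 + n_3 \leq l-2$.

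For the lower bound I would argue by contradiction. Suppose $n_2 + n_3 \leq l - 3$. Because $G[R]$ contains $n-2$ vertices while every path component has at most $n_1 = l-1$ vertices, the number $q$ of components is very large (in particular, at least $4$), so a smallest component $P_{n_q}$ is available. I apply the $(n_2, n_q)$-transformation from Definition \ref{de2} to $G[R]$ to obtain a new disjoint union of paths $H'$. Since $n_2 + 1 \leq l-2 < l-1 = n_1$, a quick re-sort shows the three largest path orders in $H'$ are $n_1(H') = l-1$, $n_2(H') = n_2 + 1$, $n_3(H') = n_3$, while $n_4(H') = n_4$ because $n_q - 1 \leq n_q \leq n_4$. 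Both conditions of Claim \ref{claim5.2} are then easily checked: $n_1(H') + n_2(H') + n_3(H') = l + n_2 + n_3 \leq 2l - 3$, and $n_3(H') + n_4(H') \leq 2 n_3 \leq n_2 + n_3 \leq l - 3 \leq l - 2$. Thus $K_2 \vee H'$ is planar and $2P_l$-free. Since $s_2 = n_q \leq n_2 \leq l - 3$, the size hypothesis of Lemma \ref{lm6} is implied by the standing assumption $n \geq 10.2 \times 2^{tl-3} + 2$, yielding $\rho(K_2 \vee H') > \rho(K_2 \vee G[R]) = \rho$. This contradicts the maximality of $\rho$ and forces $n_2 + n_3 \geq l - 2$.

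The main obstacle is bookkeeping rather than a deep idea: I have to ensure that the $(n_2, n_q)$-transformation does not disturb the ordering of the four largest components in $H'$ and that \emph{both} conditions of Claim \ref{claim5.2} survive. The non-trivial one is the bound on $n_3(H') + n_4(H')$, which rests on the simple observation that $2 n_3 \leq n_2 + n_3$ together with the contradiction hypothesis. Once that is in hand, the rest of the argument is a clean application of Lemma \ref{lm6}.
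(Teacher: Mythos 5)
Your proposal is correct and follows essentially the same route as the paper: the upper bound comes from Claim \ref{claim5.2} combined with $n_1=l-1$ from Claim \ref{claim5.3}, and the lower bound is proved by contradiction via an $(n_2,n_q)$-transformation of $G[R]$, checking both conditions of Claim \ref{claim5.2} for the transformed union of paths and invoking Lemma \ref{lm6} to contradict the maximality of $\rho(G)$. Your extra bookkeeping (that $q$ is large and that the ordering $n_1(H')=l-1$, $n_2(H')=n_2+1$, $n_3(H')=n_3$, $n_4(H')=n_4$ is preserved) only makes explicit what the paper leaves implicit.
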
  
  
\begin{proof}  
By Claims \ref{claim5.2} and \ref{claim5.3}, we already know that $n_2+n_3 \leq l-2 $.
Now suppose to the contrary that $n_2+n_3 < l-2.$ Let $H^{\prime}$ be an $(n_2,n_q)$-transformation of $G[R].$ Clearly, $n_2(H^{\prime})=n_2+1$ and $n_3(H^{\prime})=n_3.$ Then, $n_2(H^{\prime})+n_3(H^{\prime})\leq l-2.$
Note that $n_4 \leq n_3 \leq n_2 \leq n_1 = l-1$, then we get $n_1(H^{\prime})+n_2(H^{\prime}) +n_3(H^{\prime}) \leq 2l-3$ and $n_3(H^{\prime}) +n_4(H^{\prime}) \leq l-2.$ By Claim \ref{claim5.2}, $K_2 \vee H^{\prime}$ is $2P_l$-free. As $n \geq 10.2 \times 2^{2l-3} + 2 \geq 10.2 \times 2^{s_2} + 2$ and Lemma \ref{lm6}, $\rho(K_2 \vee H^{\prime})>\rho$, contradicting that $G$ is extremal to $spex_{\mathcal{P}}(n,2P_{l}).$ Hence, the claim holds.
\end{proof}  

\begin{claiiim}\label{claim4.5}  
$n_i = \lfloor \frac{l-2}{2} \rfloor$ for $i \in \{3, \ldots, q - 1\}$.  
\end{claiiim}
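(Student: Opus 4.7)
The plan is to mirror the two-stage structure used in Claim \ref{claim4.4} from the proof of Theorem \ref{thm3}: first I would equalize the ``middle'' orders $n_4, \ldots, n_{q-1}$ to the common value $n_3$ by successive $(s_1,s_2)$-transformations, and then I would pin down $n_3 = \lfloor (l-2)/2 \rfloor$ by a local edge-swap argument that combines the eigenvector bounds of Lemma \ref{lm5} with the estimate $\rho \geq \rho(K_{2,n-2}) = \sqrt{2n-4}$.

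For the first step, I would suppose for contradiction that some $n_i \leq n_3 - 1$ with $4 \leq i \leq q-1$, take $i_0$ minimal with this property, and apply an $(n_{i_0}, n_q)$-transformation as in Definition \ref{de2} to obtain $H'$. Because only paths of order at most $n_3 - 1$ are touched, the top four orders read $n_1(H') = l-1$, $n_2(H') = n_2$, $n_3(H') = n_3$, and $n_4(H') \leq n_3$, so both bounds of Claim \ref{claim5.2} remain valid and $K_2 \vee H'$ is still $2P_l$-free. Since $n \geq 10.2 \cdot 2^{s_2} + 2$ holds for $s_2 \leq n_3 \leq l-2$, Lemma \ref{lm6} gives $\rho(K_2 \vee H') > \rho$, contradicting the extremality of $G$. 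Iterating yields $n_i = n_3$ for every $i \in \{4, \ldots, q-1\}$.

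For the second step, note that $n_3 \leq \lfloor (l-2)/2 \rfloor$ follows automatically from $n_2 + n_3 = l-2$ and $n_2 \geq n_3$. I would assume for contradiction that $n_3 \leq \lfloor (l-2)/2 \rfloor - 1$; then $n_2 \geq n_3 + 2$ in both parities of $l$. Writing $P^{(i)}$ for the $i$-th longest path of $G[R]$, the identity $n - 2 = \sum_{i} n_i \leq n_1 + n_2 + (q-2)n_3$ together with $n \geq N + \tfrac{3}{2}\sqrt{2N-6}$ ensures that far more than $n_3 + 2$ of the paths $P^{(3)}, \ldots, P^{(q-1)}$ have order exactly $n_3$. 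I would then imitate the surgery of Claim \ref{claim4.4}: delete an end-edge $w'w''$ of $P^{(1)}$ and attach $w''$ to an endpoint of one such length-$n_3$ path, then dismantle a further length-$n_3$ path $w_1 \cdots w_{n_3}$ and join each $w_i$ to an endpoint of a distinct remaining length-$n_3$ path. The net change deletes $n_3$ edges and adds $n_3 + 1$ edges inside $R$. Bounding every product $x_{u_i} x_{u_j}$ between $4/\rho^2$ and $4/\rho^2 + 25/\rho^3$ by Lemma \ref{lm5} and using $\rho \geq \sqrt{2n-4}$, the Rayleigh-quotient calculation
\[
\rho(G') - \rho \;\geq\; \frac{2}{X^T X}\!\left(\frac{4(n_3+1)}{\rho^2} - \frac{4 n_3}{\rho^2} - \frac{25\, n_3}{\rho^3}\right) \;>\; 0
\]
goes through because $n_3 \leq \lfloor (l-2)/2 \rfloor \leq \tfrac{4}{25}\sqrt{2n-4} \leq \tfrac{4}{25}\rho$ from $n \geq \tfrac{625}{32}\lfloor (l-2)/2 \rfloor^2 + 2$.

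I expect the main obstacle to be verifying that the surgered graph $G'$ remains $2P_l$-free, since the top of the length sequence has been rearranged. The new top four orders are $n_1(G') = l-2$, $n_2(G') = n_2$, $n_3(G') = n_3+1$, $n_4(G') = n_3+1$ (here $n_2 \geq n_3 + 2$ guarantees $n_2 > n_3 + 1$, so the ordering is correct). A direct check gives $n_1(G') + n_2(G') + n_3(G') = (l-2) + n_2 + (n_3+1) = 2l - 3$ and $n_3(G') + n_4(G') = 2(n_3+1) \leq l - 2$, the latter because $n_3 \leq \lfloor (l-2)/2 \rfloor - 1$ in both parities of $l$. Hence both inequalities of Claim \ref{claim5.2} survive, $K_2 \vee G'$ is $2P_l$-free, and we contradict the maximality of $\rho$. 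This forces $n_3 = \lfloor (l-2)/2 \rfloor$, whence $n_2 = \lceil (l-2)/2 \rceil$, and combined with the first step gives $G[R] \cong H_{\mathcal{P}}(l-1, \lceil (l-2)/2 \rceil, \lfloor (l-2)/2 \rfloor)$ as required.
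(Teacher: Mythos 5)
Your proposal is correct and follows essentially the same two-stage argument as the paper: the same $(n_{i_0},n_q)$-transformation (via Lemma \ref{lm6}) to equalize $n_4,\dots,n_{q-1}$ with $n_3$, and then the same delete-$n_3$/add-$(n_3+1)$ edge surgery analyzed with the eigenvector bounds of Lemma \ref{lm5} and $\rho\geq\rho(K_{2,n-2})=\sqrt{2n-4}$ under $n\geq\frac{625}{32}\lfloor\frac{l-2}{2}\rfloor^2+2$. The only cosmetic difference is that your surgery shortens $P^{(1)}$ (new top orders $l-2,\ n_2,\ n_3+1,\ n_3+1$) while the paper's displayed $G'$ shortens $P^{(2)}$ (orders $l-1,\ n_2-1,\ n_3+1,\ n_3+1$); both rearrangements satisfy the two conditions of Claim \ref{claim5.2}, and your explicit verification of $2P_l$-freeness of the surgered graph is actually more careful than the paper's.
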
  
  
\begin{proof}  
Suppose to the contrary, then set $i_0= \min\{i|4\leq i\leq t-1,n_i\leq n_3-1\}.$ Let $H^\prime$ be an $(n_{i_0},n_t)$-transformation of $G[R].$ Clearly, $n_2(H^{\prime})=n_2$ and $n_3(H^{\prime})=\max\{n_3,n_{i_0}+1\}=n_3$, and so $n_2(H^{\prime})+n_3(H^{\prime})=l-2.$
By Claim \ref{claim5.2}, $K_2 \vee H^{\prime}$ is $2P_l$-free. As $n \geq 10.2 \times 2^{(t-1)(l-1)+l-3} + 2 \geq 10.2 \times 2^{s_2} + 2$, by Lemma \ref{lm6}, we have $\rho(K_2 \vee H^{\prime})>\rho$, contradicting the fact that $G$ is extremal to spex$_{P}(n,2P_l).$ Hence, the claim holds. It follows that $G=K_2 \vee (P_{n_1}\cup P_{n_2}\cup(q-3)P_{n_3}\cup P_{n_t}).$
Finally, we claim that $n_3=\left\lfloor\frac{l-2}2\right\rfloor.$ Note that $1\leq n_3\leq\left\lfloor\frac{l-2}2\right\rfloor.$ Then, $n_3=\left\lfloor\frac{l-2}2\right\rfloor$ for $l\in\{4,5\}.$ It remains the case $l\geq6.$
Suppose to the contrary, then $n_3\leq\left\lfloor\frac{l-4}2\right\rfloor$ as $n_3<\left\lfloor\frac{l-2}{2}\right\rfloor.$ Using $P^i$ to denote the $i$-th longest path of $G[R]$ for $i\in\{1,...,q\}.$
Recall that $q\geq n_3+4.$
Noting that $n-2 = \sum_{i=1}^{q} n_i \leq n_1 + (q-1)n_2$, which implies that $q \geq n_4 + 4$ by $n \geq 10.2\times2^{(t-1)l+l-3}+2 > (t-1)l + l-1 + \lceil\frac{l-3}{2}\rceil^2 + 4\lfloor\frac{l-3}{2}\rfloor$.
Then $P^3,P^4,...,P^{n_3+3}$ are paths of order $n_3.$ We may assume that $P^{n_3+3}=w_1w_2\cdots w_{n_3}.$
Since $n_1=l-3-n_2\geq\left\lceil\frac{l-2}2\right\rceil\geq2$, there exists an endpoint $w^\prime$ of $P^1$ with $w^\prime w^{\prime\prime}\in E\left(P^1\right).$
Let $G^\prime$ be obtained from $G$ by

(1) deleting $w^\prime w^{\prime\prime}$ and joining $w^\prime$ to an endpoint of $P^{n_3+2};$

(2) deleting all edges of $P^{n_3+3}$ and joining $w_i$ to an endpoint of $P^{i+2}$ for each $i\in\{1,...,n_3\}.$
Then, $G^\prime$ is obtained from $G$ by deleting $n_3$ edges and adding $n_3+1$ edges.
By Lemma \ref{lm5}, we obtain
$$\frac4{\rho^2}\leq x_{u_i}x_{u_j}\leq\frac4{\rho^2}+\frac{24}{\rho^3}+\frac{36}{\rho^4}<\frac4{\rho^2}+\frac{25}{\rho^3}$$
for any vertices $u_i,u_j\in R.$ Then

$$\rho(G^{\prime})-\rho\geq\frac{X^T(A(G^{\prime})-A(G))X}{X^TX}>\frac2{X^TX}\biggl(\frac{4(n_3+1)}{\rho^2}-\frac{4n_3}{\rho^2}-\frac{25n_3}{\rho^3}\biggr)>0,$$
where $n_3<\left\lfloor\frac{l-2}2\right\rfloor\leq\frac4{25}\sqrt{2n-4}\leq\frac4{25}\rho$ as $n\geq\frac{625}{32}\left\lfloor\frac{l-2}2\right\rfloor^2+2$ and $\rho \geq \rho(K_{2,n-2})=\sqrt{2n-4}$ (since $K_{2,n-2}$ is a $2P_l$-free planar graph).
So, $\rho(G^{\prime})>\rho.$
Meanwhlie, $G^\prime\cong K_2 \vee (P_{n_1}\cup P_{n_2-1}\cup(n_3+1)P_{n_3+1}\cup(q-n_3-4)P_{n_3}\cup P_{n_q}).$
By Claim \ref{claim5.2}, $G^\prime$ is $P_{l}$-free, contradicting the fact that $G$ is extremal to $spex_{P}(n,P_l).$ So, the claim holds.
\end{proof}

Since $n_2 + n_3 = l-2$ and $n_3=\lfloor \frac{l-2}{2} \rfloor$, we have $n_2=\lceil \frac{l-2}{2}\rceil$. Moreover, since $n_i=\lfloor \frac{l-2}{2} \rfloor$ for $i \in \{3, \ldots, q - 1\}$ and $n_q \leq n_3$, we can see that $G[R] \cong H_{\mathcal{P}}(l-1, \lceil \frac{l-2}{2} \rceil, \lfloor\frac{l-2}{2}\rfloor)$ for case of $r=0$. Furthermore, we can conclude that $G\cong H_{\mathcal{P}}(l-1, \lceil \frac{l-2}{2} \rceil, \lfloor\frac{l-2}{2}\rfloor)$.

\end{casee}  
  
\begin{casee}\label{case4.2}
$r = 1$ and $l \geq 6$. Then we have the following claims.
\begin{claiiim}\label{claim5.6}  
Let $P_{n_1} = v_1v_2 \cdots v_{n_1}$, $P' = v_1v_2 \cdots v_{l}$ and $P_{\bar{n}_1} = v_{l+1} \cdots v_{n_1}$, where $\bar{n}_1=n_1-l$.
Then $K_2 \vee G[A]$ does not contain $2P_l$ if and only if $\bar{n}_1+n_2+n_3 \leq l-3$ or $n_2+n_3+n_4 \leq l-3$.
\end{claiiim}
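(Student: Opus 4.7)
The plan is to mirror the structural arguments of Claims \ref{claim3.5} and \ref{claim5.2}: split the search for $2P_l$ in $K_2\vee G[R]$ into ``one copy realised as $P'$'' plus ``a second copy found inside $K_2\vee(G[R]-V(P'))$''. Since removing the $l$-prefix $v_1,\ldots,v_l$ of $P_{n_1}$ leaves the contiguous tail $P_{\bar n_1}$, the leftover graph is $P_{\bar n_1}\cup P_{n_2}\cup\cdots\cup P_{n_q}$. The structural observation I would lean on (the analogue of Claim \ref{claim4.2}) is that the longest path in $K_2$ joined with a disjoint union of paths must alternate segment\,--\,$u'$\,--\,segment\,--\,$u''$\,--\,segment, hence has order $a+b+c+2$ for three path-orders $a,b,c$ drawn from $\{\bar n_1,n_2,\ldots,n_q\}$. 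Maximising, this top-three sum is $\bar n_1+n_2+n_3$ when $\bar n_1\geq n_4$ and $n_2+n_3+n_4$ when $\bar n_1<n_4$; these are exactly the two expressions appearing in the claim.

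For the forward (``only if'') direction I would argue by contrapositive. Assume one of the two expressions is at least $l-2$, say $\bar n_1+n_2+n_3\geq l-2$ (the other case is fully symmetric). Explicitly concatenating through $u'$ and $u''$ produces a path of order $\bar n_1+n_2+n_3+2\geq l$ inside $K_2\vee(P_{\bar n_1}\cup P_{n_2}\cup P_{n_3})$; truncating to any $l$-vertex initial segment yields a $P_l$ that is vertex-disjoint from $P'$, because its interior lies in $V(P_{\bar n_1})$ together with the $P_{n_i}$'s ($i\geq 2$) and the apex vertices $u',u''$, none of which belongs to $P'$. Together with $P'$ this produces the forbidden $2P_l$.

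For the backward (``if'') direction I would show that under both bounds $K_2\vee G[R]$ cannot contain $2P_l$. The main obstacle will be the following subtlety: a hypothetical $2P_l$ need not use $P'$ as one of its two copies, so the longest-path count above does not directly close the case. Here I would invoke $r=1$, which forces $l\leq n_1\leq 2l-1$: hence any $l$-subpath of $P_{n_1}$ leaves exactly $\bar n_1$ vertices of $P_{n_1}$, split into one or two pieces. The endpoint choice $P'$ leaves a single contiguous piece of order $\bar n_1$, while any interior $l$-subpath splits the remainder into two strictly smaller pieces $P_a\cup P_b$ with $a+b=\bar n_1$; a short majorisation-style comparison then shows the top-three sum of the residual path-orders is maximised by the endpoint choice, so no other first-copy decomposition can fit a longer second path. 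Combined with the fact that each of $u',u''$ can belong to at most one of two vertex-disjoint $l$-paths, this reduces every hypothetical $2P_l$ configuration to the leftover analysis already performed, and the negations of the two top-three bounds yield precisely the two inequalities in the statement (understood in the ``whichever case is active'' sense used in the proofs of Claims \ref{claim3.5} and \ref{claim4.2}).
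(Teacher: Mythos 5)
You follow the same route as the paper: take one copy of $P_l$ to be the prefix $P'$ and read off the existence of a second, disjoint copy from the longest path of $K_2\vee(P_{\bar{n}_1}\cup P_{n_2}\cup\cdots\cup P_{n_q})$, whose order is the largest top-three segment sum plus two, i.e. $\max\{\bar{n}_1+n_2+n_3,\,n_2+n_3+n_4\}+2$. Your ``only if'' direction is correct; your reading of the ``or'' as ``whichever of the two sums is the larger'' is the right one (taken literally the disjunction is too weak: for $l=12$ and $G[R]=P_{12}\cup P_5\cup P_4\cup P_4$, possibly padded with isolated vertices, the first sum is $9\le l-3$, yet $P_{12}$ together with a truncation of the path $P_5\,u'\,P_4\,u''\,P_4$ gives a $2P_{12}$); and your majorisation step, showing that among apex-free copies inside $P_{n_1}$ the end placement leaves the largest residual top-three sum, is fine. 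In these respects you are more careful than the paper's two-sentence proof, which silently assumes that one copy of $P_l$ may be taken to be $P'$.

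The genuine gap is the configuration in which the two copies of $P_l$ each contain one apex, say $Q_1\ni u'$ and $Q_2\ni u''$, so that neither copy is apex-free and neither lies inside $P_{n_1}$. Your closing sentence, that ``each of $u',u''$ can belong to at most one of two vertex-disjoint $l$-paths'' reduces everything to the leftover analysis, does not effect this reduction: the leftover analysis presupposes an apex-free first copy, which is exactly what is missing here. The case needs its own short argument, for instance: under $\bar{n}_1+n_2+n_3\le l-3$ and $n_2+n_3+n_4\le l-3$ we have $n_2+n_3\le l-3$, so a copy through a single apex (which, minus its apex, is at most two segments of $G[R]$ totalling $l-1$) cannot avoid $P_{n_1}$; hence $Q_1$ uses $l-1-s$ vertices of $P_{n_1}$ and $s\le n_2$ vertices of at most one other component, leaving at most $\bar{n}_1+s+1$ vertices of $P_{n_1}$ for $Q_2$, and any two segments then available to $Q_2$ total at most $\bar{n}_1+n_2+n_3+1\le l-2<l-1$, a contradiction. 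Adding this case closes your ``if'' direction; note that the paper's own proof omits it as well.
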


\begin{proof}
Observing that the longest path which belongs to $K_2 \vee (H-P')$ has an order of $\bar{n}_1 + n_2 + n_3 + 2$ or $n_2+n_3+n_4+2$.
Furthermore, $K_2 \vee (P_{\bar{n}_1} \cup P_{n_2} \cup P_{n_3})$ or $K_2 \vee (P_{n_2} \cup P_{n_3} \cup P_{n_4})$ contains a path $P_i$ for each $i \in \{3,\ldots,\max\{\bar{n}_1+ n_2 +n_3 + 2, n_2+n_3 +n_4+2\}\}.$
Therefore, $\bar{n}_1+n_2+n_3 \leq l-3$ or $n_2+n_3+n_4 \leq l-3$ if and only if $K_2 \vee H$ is $2P_{l}$‐free.
Hence, the claim holds.
\end{proof}

\begin{claiiim}\label{claim5.7}  
$n_i=n_3$ for $i \in \{4,\ldots,q-1\}$.
\end{claiiim}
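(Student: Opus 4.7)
The plan is to argue by contradiction via an $(s_1,s_2)$-transformation, mirroring the template of Claim \ref{claim4.5} in Case \ref{case4.1} and the outerplanar analog Claim \ref{claim3.6}. Suppose there exists $i \in \{4,\ldots,q-1\}$ with $n_i < n_3$, and set $i_0 = \min\{i \in \{4,\ldots,q-1\} : n_i \leq n_3 - 1\}$. The proof naturally splits into the two cases $i_0 > 4$ and $i_0 = 4$, with the latter being the delicate one.

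In the bulk case $i_0 > 4$, I let $H'$ be the $(n_{i_0}, n_q)$-transformation of $G[R]$. Since this modifies only the $i_0$-th and $q$-th path lengths, the four longest entries $\bar{n}_1(H')$, $n_2(H')$, $n_3(H')$, $n_4(H')$ coincide with those of $G[R]$. Consequently both disjunctive conditions of Claim \ref{claim5.6}, namely $\bar{n}_1 + n_2 + n_3 \leq l-3$ and $n_2 + n_3 + n_4 \leq l-3$, inherit their status from $G[R]$, so $K_2 \vee H'$ remains $2P_l$-free. The hypothesis $n \geq 10.2 \times 2^{tl-3} + 2 \geq 10.2 \times 2^{n_q} + 2$ then lets me invoke Lemma \ref{lm6} to obtain $\rho(K_2 \vee H') > \rho$, contradicting the extremality of $G$.

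For the case $i_0 = 4$, the $(n_4, n_q)$-transformation raises $n_4(H') = n_4 + 1 \leq n_3$ but leaves $\bar{n}_1, n_2, n_3$ fixed, so condition 1 of Claim \ref{claim5.6} is preserved while condition 2 tightens by one. I split according to which disjunct of Claim \ref{claim5.6} is active for $G[R]$. If $\bar{n}_1 + n_2 + n_3 \leq l-3$ holds, the $(n_4, n_q)$-transformation keeps $H'$ $2P_l$-free and the previous argument closes the case. If instead only $n_2 + n_3 + n_4 \leq l-3$ holds, I switch to the $(n_1, n_2)$-transformation, after which $n_2(H') + n_3(H') + n_4(H') = n_2 + n_3 + n_4 - 1 \leq l-4$ strengthens condition 2 so $K_2 \vee H'$ is $2P_l$-free; Lemma \ref{lm6} again delivers the spectral contradiction.

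The principal obstacle is ruling out the borderline $n_1 = 2l-1$ scenario within the second sub-case, since there the $(n_1, n_2)$-transformation would enlarge $P_{n_1}$ to $P_{2l}$ and introduce a second vertex-disjoint $l$-path inside $P_{n_1+1}$, breaking $2P_l$-freeness. In that regime one has $\bar{n}_1 = l-1$, which renders the first disjunct $\bar{n}_1 + n_2 + n_3 \leq l-3$ automatically impossible and confines the argument to the second disjunct; here one needs to verify that an alternative transformation (for instance an $(n_1, n_4)$-type move exploiting the constraint $n_2 + n_3 + n_4 \leq l-3$) strictly decreases the active sum while preserving $2P_l$-freeness, after which Lemma \ref{lm6} yields the final contradiction.
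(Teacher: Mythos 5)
Your argument tracks the paper's own proof of this claim almost step for step: the same contradiction setup via $i_0$, the same $(n_{i_0},n_q)$-transformation when $i_0>4$, and for $i_0=4$ the same split according to which disjunct of Claim \ref{claim5.6} is satisfied, handled by the $(n_4,n_q)$- and $(n_1,n_2)$-transformations together with Lemma \ref{lm6}. Omitting the paper's intermediate sub-case (some $j\ge 5$ with $n_j<n_4$) is harmless, since your disjunct split covers it; and in the second disjunct your computation that the new second-through-fourth lengths sum to at most $n_2+n_3+n_4-1\le l-4$ goes through because $i_0=4$ forces $n_4\le n_3-1\le n_2-1$.

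The unresolved point you flag at the end is, however, a genuine gap, and your sketched repair does not close it. When only $n_2+n_3+n_4\le l-3$ holds and $n_1=2l-1$ (recall $r=1$ only guarantees $l\le n_1\le 2l-1$), the $(n_1,n_2)$-transformation turns $P_{n_1}$ into $P_{2l}$, which already contains two vertex-disjoint copies of $P_l$; so the transformed graph is not $2P_l$-free, and Claim \ref{claim5.6} cannot be applied to it, since its ``if and only if'' is derived under the standing assumption that $P_{n_1}$ contains only one independent $l$-path (equivalently $\bar{n}_1\le l-1$). Your proposed ``$(n_1,n_4)$-type move'' has exactly the same defect: every $(s_1,s_2)$-transformation of Definition \ref{de2} with $s_1=n_1$ lengthens the first path to $2l$. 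To finish, the argument must avoid touching $P_{n_1}$ in this regime: when $n_2+n_3+n_4\le l-4$ an $(n_4,n_q)$-transformation still works, but when the sum equals $l-3$ no transformation of Definition \ref{de2} preserves $2P_l$-freeness, and one needs a separate comparison (for example an edge-rewiring argument with the eigenvector bounds of Lemma \ref{lm5}, in the spirit of what the paper does immediately after this claim). It is worth noting that the paper's own proof applies Claim \ref{claim5.6} to $H^{(4)}$ without checking this boundary case either, so you have identified a spot where the published argument is likewise incomplete — but as it stands your proposal leaves that case open.
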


\begin{proof}
Suppose to the contrary, then there exists an $i_0 = \min\{i|4 \leq i \leq q-1\}$.
If $i_0 > 4$, then let $H'$ be an $(n_{i_0},n_q)$-transformation. Thus, $n_1(H')=n_1$(it means $\bar{n}_1(H') = \bar{n}_1$), $n_2(H')=n_2$, $n_3(H')=\max\{n_3,n_{i_0}+1\}$ and $n_4(H')=n_4$. Therefore, $\bar{n}_1(H')+n_2(H')+n_3 \leq l-3$ or $n_2(H')+n_3(H')+n_4(H') \leq l-3$. By Claim \ref{claim5.6}, $K_2 \vee H'$ is $2P_{l}$-free.
As $n \geq 10.2 \times 2^{(t-1)(l-1)+l-3} + 2 \geq 10.2 \times 2^{s_2} + 2$, by Lemma \ref{lm6}, we have $\rho(K_2 \vee H') > \rho$, a contradiction.
If $i_0 =4$, we first suppose that there exists a $j$ such that $n_j < n_4 \leq n_3-1$, where $5 \leq j \leq q-1$. Let $H''$ be the $(n_j,n_t)$-transformation of $G[R]$, where $5 \leq j \leq q-1.$ Then $\bar{n}_{1}(H'') = \bar{n}_{1}$, $n_{2}(H'') = n_{2}$, $n_3(H'') = n_3$ and $n_{4}(H'')=\max\{n_{4},n_{j}+1\}=n_{4}.$
Using the methods similar to those for $i_0>4$, we can also get a contradiction.
Then we suppose that $n_j=n_4$ for any $j\in\{5,\cdots,q-1\}$. If $\bar{n}_1+n_2+n_3 \leq l-3$, let $H^{(3)}$ be the $(n_4,n_q)$-transformation of $G[R]$, then $n_{1}(H^{(3)})=n_{1},n_{2}(H^{(3)})=n_{2},n_{3}(H^{(3)})=\max\{n_{3},n_{4}+1\}=n_{3}.$ 
Thus, we have $\bar{n}_{1}(H^{(3)})+n_{2}(H^{(3)})+n_{3}(H^{(3)}) \leq l-3$. If $n_2+n_3+n_4 \leq l-3$, let $H^{(4)}$be the $(n_1,n_2)$-transformation of $G[R]$, then $n_1(H^{(4)})=n_1+1$, $n_{2}(H^{(4)})=\max\{n_{2}-1,n_{3}\}=n_{2}-1$, $n_3(H^{(4)})=n_3$ and $n_{4}(H^{(4)})=n_{4}.$
So, we get $n_2(H^{(4)})+n_3(H^{(4)})+n_4(H^{(4)})=n_2+n_3+n_4-1 \leq l-4<l-3.$ It follows from Claim \ref{claim5.6} that neither $K_2 \vee H^{(3)}$ nor $K_2 \vee H^{(4)}$ contains $2P_{l}.$ As $n \geq 10.2 \times 2^{(t-1)(l-1)+l-3} + 2 \geq 10.2 \times 2^{s_2} + 2$, by Lemma \ref{lm5}, we get $\rho(K_{2} \vee H^{(3)})>\rho(G)$ and $\rho(K_{2} \vee H^{(4)})>\rho(G)$, a contraction.
\end{proof}

For $l\geq6$ and fixed two integers $1 \leq x \leq \lceil\frac{l-3}{3}\rceil$ and $1 \leq y \leq \lfloor\frac{l-3}{3}\rfloor$, we said $G[R] \cong H_{\mathcal{P}}(l+l-3-x-y,x,y)$.
If not,
we can obtain $G[R] \cong H_{\mathcal{P}}(l+l-3-x-y,x,y)$ by applying a series of $(s_1, s_2)$-transformations to $G[R]$, where $s_2 \leq s_1 \leq (t-1)l+l-1+l-1$.
As $n \geq 10.2 \times 2^{(t-1)(l-1)+l-3} + 2 \geq 10.2 \times 2^{s_2} + 2$, we conclude that $\rho < \rho(K_2 \vee H_{\mathcal{P}}(l+l-3-x-y,x,y)$ by Lemma \ref{lm5},  
which is impossible by the maximality of $\rho = \rho(G)$.
Now we assert that $\rho(K_2 \vee H_{\mathcal{P}}(l+l-3-x-y,x,y) < \rho(K_2 \vee H^*) = \rho(K_2 \vee H_{\mathcal{P}}(l-1,\lceil\frac{l-2}{2}\rceil,\lfloor\frac{l-2}{2}\rfloor)$.
Let $P^{(i)}$ to denote the $i$-th longest path of $H_{\mathcal{P}}(l+l-3-x-y,x,y)$ for any $i \in \{1, \ldots, q\}$ and $|P^{(i)}|$ to denote the orders of these $P^{(i)}$s, where $|P^{(1)}|=l+l-3-x-y$, $|P^{(2)}|=x$, $|P^{(i)}|=y $ for $i \in \{3, \ldots, q-1\}$ and $|P^{(q)}| \geq 0$. For convenience, we write that $n_1^\circ=|P^{(1)}|$, $n_2^\circ=|P^{(2)}|$, $n_3^\circ=|P^{(i)}|(3 \leq i \leq q-1)$ and $n_4^\circ=|P^{(q)}|$ in the following text.
As $l \geq 6$ and $n_1^\circ = l+l-3-x-y $, there are two edges $w'w''=v_{l-1}v_{l}$ and $z'z'' =$ $v_{l+\lceil\frac{l-2}{2}\rceil-x-1}$ $v_{l+\lceil\frac{l-2}{2}\rceil-x}$ $\in E(P^{(1)})$.
Noting that $n-2 = \sum_{i=1}^{q} n_i \leq n_1 + (q-1)n_2$, which implies that $q \geq n_4^\circ + 4$ by $n \geq 10.2\times2^{(t-1)l+l-3}+2 > l + l-1 + \lceil\frac{l-3}{2}\rceil^2 + 4\lfloor\frac{l-3}{2}\rfloor$.  
Then $P^{(3)}, P^{(4)}, \ldots, P^{(n_4^\circ+3)}$ are paths of order $n_3^\circ$.  
And assume that $P^{(q)} = w_1w_2 \cdots w_{n_4^\circ}$.
Let $G'$ be obtained from $G$ by 

(1) deleting $w'w''$ and joining $w''$ to an endpoint of $P^{(2)}$;

(2) deleting $z'z''$ and joining $z''$ to an endpoint of $P^{(3)}$;

(3) deleting all edges of $P^{(q)}$ and joining $w_i$ to an endpoint of $P^{(i+3)}$ for each $i \in \{1, \ldots, n_4^\circ\}$. 
Clearly, $G'$ is obtained from $G$ by deleting $n_4^\circ+1$ edges and adding $n_4^\circ + 2$ edges.  
By Lemma \ref{lm4}, then  
\[  
\frac{4}{\rho^2} \leq x_{u_i}x_{u_j} \leq \frac{4}{\rho^2} + \frac{24}{\rho^3} + \frac{36}{\rho^4} < \frac{4}{\rho^2} + \frac{25}{\rho^3}  
\]  
for any vertices $u_i, u_j \in R$. Therefore,  
\[  
\rho(G') - \rho \geq \frac{X^T(A(G') - A(G))X}{X^TX} > \frac{2}{X^TX} \left( \frac{4(n_4^\circ + 2)}{\rho^2} - \frac{4(n_4^\circ+1)}{\rho^2} - \frac{25(n_4^\circ+1)}{\rho^3} \right) > 0,  
\]  
where $n_4^\circ + 1 <  y +1 \leq \lfloor \frac{l-3}{3} \rfloor+1 \leq \frac{4}{25} \sqrt{2n-4} \leq \frac{4}{25} \rho$ as $n \geq \frac{625}{32} (\lfloor \frac{l-3}{3} \rfloor + 1 )^2 + 2$ and $\rho \geq \rho(K_{2,n-2})=\sqrt{2n-4}$ (since $K_{2,n-2}$ is a $2P_l$-free planar graph).
Yet, it is sad to say that $P_{n_1}$ in $G'$ has no $l$-paths, which means that $r = 0$. Then, by Case \ref{case4.1},
we have $\rho = \rho(K_2 \vee H_{\mathcal{P}}(l+l-3-x-y,x,y) < \rho(G') \leq \rho(K_2 \vee H_{\mathcal{P}}(l-1,\lceil\frac{l-2}{2}\rceil, \lfloor\frac{l-2}{2}\rfloor))$,  
which is impossible by the maximality of $\rho = \rho(G)$, leading to a contradiction.  
\end{casee}

(iii) It remains that $t\geq 3$ and $l \geq 3$, we consider the following claims in order to provide the remaining proof for Theorem \ref{thm4}.

 \begin{claiiim}\label{claim5.8}
If $H$ is a union of disjoint paths and $K_2 \vee H$ is $tP_l$-free,
then $n_1(H)+n_2(H)+n_3(H) \leq n_1(H)+2n_2(H) \leq (t-3)l+l-1+l-1+l-1$.
\end{claiiim}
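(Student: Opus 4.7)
My plan is to argue by contradiction: assume $n_1(H)+2n_2(H)\ge tl-2$ and exhibit $t$ pairwise vertex-disjoint copies of $P_l$ inside $K_2\vee H$, contradicting $tP_l$-freeness. The first inequality $n_1(H)+n_2(H)+n_3(H)\le n_1(H)+2n_2(H)$ is immediate from $n_3(H)\le n_2(H)$, so only the right-hand inequality requires real work. Throughout I will use Claim \ref{claim5.1}: $u',u''$ are both adjacent to every vertex of $V(H)$ and $u'u''\in E(G)$.

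The main device is the ``backbone'' path
\[
P^{\ast} := v_1v_2\cdots v_{n_1}\, u'\, w_{n_2}w_{n_2-1}\cdots w_1\, u''\, z_1z_2\cdots z_{n_3}
\]
inside $K_2\vee H$, where $P_{n_1}=v_1\cdots v_{n_1}$, $P_{n_2}=w_1\cdots w_{n_2}$ and $P_{n_3}=z_1\cdots z_{n_3}$. This path has $n_1+n_2+n_3+2$ vertices, so splitting it into consecutive blocks of length $l$ yields $\lfloor(n_1+n_2+n_3+2)/l\rfloor$ vertex-disjoint $P_l$'s. Whenever $n_1+n_2+n_3\ge tl-2$ this at once delivers the forbidden $tP_l$, so in the easy case $n_2=n_3$ the hypothesis $n_1+2n_2\ge tl-2$ already closes the argument via this single display.

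The harder case is $n_2>n_3$, where only $n_1+2n_2\ge tl-2$ is directly available, and here I would mirror the packing analysis of Claim \ref{claim3.4}. Set $a=\lfloor n_1/l\rfloor$ and $b=\lfloor n_2/l\rfloor$, with leftovers $r_1=n_1-al$ and $r_2=n_2-bl$ in $[0,l-1]$. Since $P_{n_1}\cup P_{n_2}\subseteq H$ already contains $a+b$ disjoint $P_l$'s, the $tP_l$-freeness of $K_2\vee H$ forces $a+b\le t-1$; plugging $n_1\le al+(l-1)$ and $n_2\le bl+(l-1)$ into the hypothesis and using $r_1+2r_2\le 3(l-1)$ yields $(a+2b)l\ge(t-3)l+1$, hence $a+2b\ge t-2$. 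Thus $t-(a+b)\in\{1,2\}$ additional $P_l$'s still have to be built through the apex vertices.

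The construction uses two shapes of apex-$P_l$: one of the form ``leftover end of $P_{n_1}$ -- $u'$ -- leftover end of $P_{n_2}$'', requiring $l-1$ leftover vertices in total, and one of the form ``$u'$ -- $u''$ -- path of $l-2$ vertices in $H$'', requiring only $l-2$ leftover vertices thanks to the edge $u'u''$. By extracting the $b$ internal copies of $P_l$ from the \emph{interior} of $P_{n_2}$ one leaves leftover at \emph{both} ends of $P_{n_2}$, so $u'$ and $u''$ can tap into $P_{n_2}$ independently; this is precisely what produces the coefficient $2$ on $n_2(H)$ in the target bound. The main obstacle is the case analysis distinguishing $a+b=t-1$ (one extra apex copy needed) from $a+b=t-2$ (two extra apex copies needed), together with the distribution of $(r_1,r_2)$: one has to verify in each sub-case that the derived lower bound $r_1+2r_2\ge(t-a-2b)l-2$ supplies enough leftover at the right endpoints to complete the required $1$ or $2$ apex copies, after which the $t$ disjoint $P_l$'s in $K_2\vee H$ give the desired contradiction.
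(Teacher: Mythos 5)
Your backbone-path observation is genuinely nice and, on its own, already proves everything the paper's proof actually establishes and later uses: if $n_1(H)+n_2(H)+n_3(H)\ge tl-2$, the path $v_1\cdots v_{n_1}\,u'\,w_{n_2}\cdots w_1\,u''\,z_1\cdots z_{n_3}$ has at least $tl$ vertices, hence contains $t$ vertex-disjoint copies of $P_l$, a contradiction. This is simpler than the paper's own argument, which introduces $a=\lfloor n_1/l\rfloor$, $b=\lfloor n_2/l\rfloor$, $c=\lfloor n_3/l\rfloor$, bounds the three remainders by $l-1$ to force $a+b+c\ge t-2$, and then completes $t$ paths through $u',u''$. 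Note, however, that the paper's contradiction hypothesis is exactly $n_1+n_2+n_3\ge tl-2$: it only proves the bound on $n_1+n_2+n_3$, and only that bound is invoked in the rest of the proof of Theorem \ref{thm4}.

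The bulk of your proposal is instead aimed at the stronger middle inequality $n_1+2n_2\le tl-3$, and there it has a genuine gap: the decisive step (``one has to verify in each sub-case that $r_1+2r_2\ge(t-a-2b)l-2$ supplies enough leftover at the right endpoints'') is only announced, not carried out, and it cannot be carried out, because that inequality is false for a general union of paths $H$. With only two apices you can route at most two of the $t$ paths through $\{u',u''\}$, a single-apex $P_l$ needs $l-1$ leftover vertices lying in at most two segments of $H$, and the doubled weight on $n_2$ does not create such segments. Concretely, take $t=3$, $l=6$ and $H=P_{11}\cup P_3\cup mP_1$ with $m$ large: any apex-free $P_6$ must lie inside $P_{11}$, which cannot hold two disjoint ones, and a short check of the remaining configurations (one path through each apex, or one path through both) shows $K_2\vee H$ is $3P_6$-free; yet $n_1+2n_2=17>15=tl-3$, while $n_1+n_2+n_3=15$ still respects the sum bound. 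So the coefficient-$2$ statement fails (the paper's claim also asserts it, but its proof and all subsequent uses concern only $n_1+n_2+n_3$). The fix is to drop the middle term and run your backbone argument directly from the assumption $n_1+n_2+n_3\ge tl-2$, with no case split between $n_2=n_3$ and $n_2>n_3$; as written, your harder case rests on a verification that a counterexample rules out.
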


\begin{proof}
Suppose to the contrary that $n_1(H) + n_2(H) +n_3(H) \geq (t - 3)l + 2(l - 1)+ l$.  
Let $a$ and $b$ be the maximum number of independent $l$-paths in $P_{n_1(H)}$, $P_{n_2(H)}$ and $P_{n_3(H)}$, respectively, i.e.,  
\[  
a = \left\lfloor \frac{n_1(H)}{l} \right\rfloor, \quad b = \left\lfloor \frac{n_2(H)}{l} \right\rfloor \quad \text{and} \quad c = \left\lfloor \frac{n_3(H)}{l} \right\rfloor.  
\]  
This implies that $n_1(H) - al \leq l - 1$, $n_2(H) - bl \leq l - 1$ and $n_3(H) - cl \leq l - 1$.
Next, we assert that $a + b + c \geq t - 2$.
If $a + b +c < t - 2$, recall that $n_1(H) + n_2(H) +n_2(H) \geq (t - 3)l + 2(l - 1)+ l$, then 
\[  
n_1(H) + n_2(H) +n_3(H)- al - bl -cl \geq 3l - 2.  
\]  
On the other hand, since $n_1(H) - al \leq l - 1$, $n_2(H) - bl \leq l - 1$ and $n_3(H) - cl \leq l - 1$, we have  
\[  
n_1(H) + n_2(H) +n_3(H)- al - bl -cl \leq 3l - 3 < 3l - 2,  
\]
which is a contradiction. Thus, $a + b + c \geq t - 2$. Therefore, $G[P_{n_1(H)} \cup P_{n_2(H)} \cup \{u', u''\}]$ would generate a $tP_l$ due to $n_1(H) + n_2(H) +n_3(H) \geq (t - 3)l + 2(l - 1)+ l$, a contradiction.   
\end{proof}

Let $H^* = H_{\mathcal{P}}(tl-2l-1, l-1)$. Clearly, $K_2 \vee H^* = K_2 \vee H_{\mathcal{P}}((t-3)l+l-1, l-1)$, which means that $n_1 = (t-3)l+l-1$ and $n_2 = l-1$.  
If $G[R] \cong H^*$, then $G \cong K_2 \vee H_{\mathcal{P}}(tl-2l-1, l-1)$, and we are done.  
If $G[R] \ncong H^*$, by Lemma \ref{lm4} and Claim \ref{claim5.8}, then $G[R]$ is a disjoint union of some paths and $n_1 + n_2 + n_3 \leq (t-3)l + l-1 + l-1 + l-1$.  
Now, consider the quantity $r$ which represents the number of independent $l$-paths in $P_{n_1}$.
Firstly, we can get $r\leq t-3$ for $l=3$, $r \leq t-2$ for $l=4,5$ and $r \leq t-1$ for $l \geq 6$. Otherwise, there would be a $tP_l$ in $G$, a contradiction.
Next, we will follow the following cases.
  
\begin{caseee}\label{case4.3}  
$r \leq t-3$ and $l \geq 3$. We can obtain $H^*$ by applying a series of $(s_1, s_2)$-transformations to $G[R]$, where $s_2 \leq s_1 \leq (t-1)(l-1) + l-3$.  
As $n \geq 10.2 \times 2^{(t-1)(l-1)+l-3} + 2 \geq 10.2 \times 2^{s_2} + 2$, we conclude that $\rho < \rho(K_2 \vee H^*) = \rho(K_2 \vee H_{\mathcal{P}}((t-3)l+l-1, l-1))$ by Lemma \ref{lm6},  
which is impossible by the maximality of $\rho = \rho(G)$.
Up to this case, we can conclude that $G\cong K_2 \vee H_{\mathcal{P}}((t-3)l+l-1,l-1)$.
\end{caseee}

\begin{caseee}  
$r = t - 2$ and $l \geq 4$. We denote that $P_{n_1} = v_1v_2 \cdots v_{n_1}$, $P' = v_1v_2 \cdots v_{(t-2)l}$, $P_{\bar{n}_1} = v_{(t-2)l+1} \cdots $ $v_{n_1}$. 
Then $K_2 \vee G[R]$ is $tP_l$ is equivalent to $K_2 \vee \bar{H}$ is $2P_l$, where $K_2 \vee \bar{H} \cong K_2 \vee (G[R]-P')$.
Therefore, similar to Case \ref{case4.1} in the proof process of Theorem \ref{thm4} (ii), we have
$\bar{n}_1=l-1$, $n_2=\lceil\frac{l-2}{2}\rceil$ and $n_i=n_3=\lfloor\frac{l-2}{2}\rfloor$ for $i \in \{4,\ldots,q-1\}$, it means that $G[R] \cong H_{\mathcal{P}}((t-2)l+l-1,\lceil\frac{l-2}{2}\rceil,\lfloor\frac{l-2}{2}\rfloor)$.

Next, we assert that $\rho(K_2 \vee H_{\mathcal{P}}((t-2)l+l-1,\lceil\frac{l-2}{2}\rceil,\lfloor\frac{l-2}{2}\rfloor) < \rho(K_2 \vee H^*) = \rho(K_2 \vee H_{\mathcal{P}}((t-3)l+l-1,l-1)$.
Let $P^{(i)}$ to denote the $i$-th longest path of $H_{\mathcal{P}}((t-2)l+l-1,\lceil\frac{l-2}{2}\rceil,\lfloor\frac{l-2}{2}\rfloor)$ for any $i \in \{1, \ldots, q\}$ and $|P^{(i)}|$ to denote the orders of these $P^{(i)}$s, where $|P^{(1)}|=(t-2)l+l-1$, $|P^{(2)}|=\lceil\frac{l-2}{2}\rceil$, $|P^{(i)}|=\lfloor\frac{l-2}{2}\rfloor $ for $i \in \{3, \ldots, q-1\}$ and $|P^{(q)}| \geq 0$. For convenience, we write that $n_1^\circ=|P^{(1)}|$, $n_2^\circ=|P^{(2)}|$, $n_3^\circ=|P^{(i)}|(3 \leq i \leq q-1)$ and $n_4^\circ=|P^{(q)}|$ in the following text.

Since $t \geq 3$, $l \geq 4$, and $n_1^\circ = (t-2)l+l-1 $, there exists two edges $w'w''=v_{(t-3)l+l-1}v_{(t-2)l}$, $ z'z''=v_{(t-2)l+\lfloor\frac{l-2}{2}\rfloor}v_{(t-2)l+\lfloor\frac{l-2}{2}\rfloor+1} \in E(P^{(1)})$.
Noting that $n-2 = \sum_{i=1}^{q} n_i \leq n_1 + (q-1)n_2$, which implies that $q \geq n_4^\circ + 4$ by $n \geq 10.2\times2^{(t-1)l+l-3}+2 \gg (t-1)l + l-1 + \lceil\frac{l-3}{2}\rceil^2 + 4\lfloor\frac{l-3}{2}\rfloor$.  
Then $P^{(3)}, P^{(4)}, \ldots, P^{(n_4^\circ+3)}$ are paths of order $n_3^\circ$.  
And assume that $P^{(q)} = w_1w_2 \cdots w_{n_4^\circ}$.
Let $G'$ be obtained from $G$ by 

(1) deleting $w'w''$ and joining $w''$ to an endpoint of $P^{(2)}$;

(2) deleting $z'z''$ and joining $z''$ to an endpoint of $P^{(3)}$;

(3) deleting all edges of $P^{(q)}$ and joining $w_i$ to an endpoint of $P^{(i+3)}$ for each $i \in \{1, \ldots, n_4^\circ\}$. 
Clearly, $G'$ is obtained from $G$ by deleting $n_4^\circ+1$ edges and adding $n_4^\circ + 2$ edges.  
By Lemma \ref{lm5}, then  
\[  
\frac{4}{\rho^2} \leq x_{u_i}x_{u_j} \leq \frac{4}{\rho^2} + \frac{24}{\rho^3} + \frac{36}{\rho^4} < \frac{4}{\rho^2} + \frac{25}{\rho^3}  
\]  
for any vertices $u_i, u_j \in R$. Therefore,  
\[  
\rho(G') - \rho \geq \frac{X^T(A(G') - A(G))X}{X^TX} > \frac{2}{X^TX} \left( \frac{4(n_4^\circ + 2)}{\rho^2} - \frac{4(n_4^\circ+1)}{\rho^2} - \frac{25(n_4^\circ+1)}{\rho^3} \right) > 0,  
\]  
where $n_4^\circ + 1 < \lfloor \frac{l-2}{2} \rfloor+1 \leq \frac{4}{25} \sqrt{2n-4} \leq \frac{4}{25} \rho$ as $n \geq \frac{625}{32} (\lfloor \frac{l-2}{2} \rfloor+1)^2 + 2$  and $\rho \geq \rho(K_{2,n-2})=\sqrt{2n-4}$ (since $K_{2,n-2}$ is a $tP_l$-free planar graph).
Yet, it is sad to say that $P_{n_1}$ in $G'$ has only $t-3$ independent $l$-paths, which means that $r = t-3$. Then, by Case \ref{case4.3},
we have $\rho = \rho(K_2 \vee H_{\mathcal{P}}((t-2)l+l-1,\lceil\frac{l-2}{2}\rceil,\lfloor\frac{l-2}{2}\rfloor) < \rho(K_2 \vee H^*) = \rho(K_2 \vee H_{\mathcal{P}}((t-3)l+l-1,l-1)$,  
which is impossible by the maximality of $\rho = \rho(G)$, leading to a contradiction.
Up to this case, we can conclude that $G\cong K_2 \vee H_{\mathcal{P}}((t-3)l+l-1,l-1)$.
\end{caseee}

\begin{casee}  
$r = t-1$ and $l \geq 6$. We denote that $P_{n_1} = v_1v_2 \cdots v_{n_1}$, $P' = v_1v_2 \cdots v_{(t-1)l}$ and $P_{\bar{n}_1} = v_{(t-1)l+1} \cdots v_{n_1}$. 
Thus, $K_2 \vee G[R]$ is $tP_l$ is equivalent to $K_2 \vee \bar{H}$ is $P_l$, where $K_2 \vee \bar{H} \cong K_2 \vee (G[R]-P')$.
Therefore, similar to the Case \ref{case4.2} of proof process of Theorem \ref{thm4} (ii), we have
 $\bar{n}_1+n_2+n_3 \leq l-3$ or $n_2+n_3+n_4 \leq l-3$, and $n_i=n_3$ for $i \in \{4,\ldots,q-1\}$.

For $l \geq 6$ and fixed two integers $1 \leq x \leq \lceil\frac{l-3}{3}\rceil$ and $1 \leq y \leq \lfloor\frac{l-3}{3}\rfloor$, we said $G[R] \cong H_{\mathcal{P}}((t-1)l+l-3-x-y,x,y)$.
If not,
we can obtain $G[R] \cong H_{\mathcal{P}}((t-1)l+l-3-x-y,x,y)$ by applying a series of $(s_1, s_2)$-transformations to $G[R]$, where $s_2 \leq s_1 \leq (t-1)l+l-1+l-1$.
As $n \geq 10.2 \times 2^{(t-1)(l-1)+l-3} + 2 \geq 10.2 \times 2^{s_2} + 2$, we conclude that $\rho < \rho(K_2 \vee H_{\mathcal{P}}((t-1)l+l-3-x-y,x,y)$ by Lemma \ref{lm5},  
which is impossible by the maximality of $\rho = \rho(G)$.
Now we assert that $\rho(K_2 \vee H_{\mathcal{P}}((t-1)l+l-3-x-y,x,y) < \rho(K_2 \vee H^*) = \rho(K_2 \vee H_{\mathcal{P}}((t-3)l+l-1,l-1)$.
Let $P^{(i)}$ to denote the $i$-th longest path of $H_{\mathcal{P}}((t-1)l+l-3-x-y,x,y)$ for any $i \in \{1, \ldots, q\}$ and $|P^{(i)}|$ to denote the orders of these $P^{(i)}$s, where $|P^{(1)}|=(t-1)l+l-3-x-y$, $|P^{(2)}|=x$, $|P^{(i)}|=y $ for $i \in \{3, \ldots, q-1\}$ and $|P^{(q)}| \geq 0$. For convenience, we write that $n_1^\circ=|P^{(1)}|$, $n_2^\circ=|P^{(2)}|$, $n_3^\circ=|P^{(i)}|(3 \leq i \leq q-1)$ and $n_4^\circ=|P^{(q)}|$ in the following text.

Since $t \geq 3$, $l \geq 6$, and $n_1^\circ = (t-1)l+l-3-x-y $, there are two edges $w'w''=v_{(t-3)l+l-1}v_{(t-2)l}$ and $z'z''=v_{(t-2)l+(l-1)-x-1}v_{(t-2)l+(l-1)-x} \in E(P^{(1)})$.
Noting that $n-2 = \sum_{i=1}^{q} n_i \leq n_1 + (q-1)n_2$, which implies that $q \geq n_4^\circ + 4$ by $n \geq 10.2\times2^{(t-1)l+l-3}+2 > (t-1)l + l-1 + \lceil\frac{l-3}{2}\rceil^2 + 4\lfloor\frac{l-3}{2}\rfloor$.  
Then $P^{(3)}, P^{(4)}, \ldots, P^{(n_4^\circ+3)}$ are paths of order $n_3^\circ$.  
And assume that $P^{(q)} = w_1w_2 \cdots w_{n_4^\circ}$.
Let $G'$ be obtained from $G$ by 

(1) deleting $w'w''$ and joining $w''$ to an endpoint of $P^{(2)}$;

(2) deleting $z'z''$ and joining $z''$ to an endpoint of $P^{(3)}$;

(3) deleting all edges of $P^{(q)}$ and joining $w_i$ to an endpoint of $P^{(i+3)}$ for each $i \in \{1, \ldots, n_4^\circ\}$. 
Clearly, $G'$ is obtained from $G$ by deleting $n_4^\circ+1$ edges and adding $n_4^\circ + 2$ edges.  
By Lemma \ref{lm5}, then  
\[  
\frac{4}{\rho^2} \leq x_{u_i}x_{u_j} \leq \frac{4}{\rho^2} + \frac{24}{\rho^3} + \frac{36}{\rho^4} < \frac{4}{\rho^2} + \frac{25}{\rho^3}  
\]  
for any vertices $u_i, u_j \in R$. Therefore,  
\[  
\rho(G') - \rho \geq \frac{X^T(A(G') - A(G))X}{X^TX} > \frac{2}{X^TX} \left( \frac{4(n_4^\circ + 2)}{\rho^2} - \frac{4(n_4^\circ+1)}{\rho^2} - \frac{25(n_4^\circ+1)}{\rho^3} \right) > 0,  
\]  
where $n_4^\circ + 1 <  y +1 \leq \lfloor \frac{l-3}{3} \rfloor+1 \leq \frac{4}{25} \sqrt{2n-4} \leq \frac{4}{25} \rho$ as $n \geq \frac{625}{32} (\lfloor \frac{l-3}{3} \rfloor+ 1 )^2 + 2$ and $\rho \geq \rho(K_{2,n-2})=\sqrt{2n-4}$ (since $K_{2,n-2}$ is a $tP_l$-free planar graph).
Yet, it is sad to say that $P_{n_1}$ in $G'$ has only $t-3$ independent $l$-paths, which means that $r = t-3$. Then, by Case \ref{case4.3},
we have $\rho=\rho(K_2 \vee H_{\mathcal{P}}((t-1)l+l-3-x-y,x,y) < \rho(K_2 \vee H_{\mathcal{P}}((t-3)l+l-1,l-1)) = \rho(K_2 \vee H^*)$,  
which is impossible by the maximality of $\rho = \rho(G)$, leading to a contradiction.  
\end{casee}



Now we considering that $G$ is disconnected. Let $n \geq N + \frac{3}{2}\sqrt{2N-6}$, where $N = \max\{2.67 \times 9^{17}, 10.2 \times 2^{s_2}+2\}$.
Suppose that $G$ is disconnected with components $G_1, G_2, \ldots, G_k$ ($k \geq 2$), and without loss of generality, assume $\rho(G_1) = \rho(G) = \rho$.
For each $i \in \{1, \ldots, k\}$, let $V(G_i) = \{u_{i,1}, u_{i,2}, \ldots, u_{i,n_i}\}$ and $t_i$ be the maximal number of independent $l$-paths of $G_i$.
Clearly, $\sum_{i=1}^k t_i \leq t-1$ and $\sum_{i=1}^k n_i = n$.
If $t_1 \leq t-2$, then constructing $G'$ by removing all the last edge of $P_l$s in $G_2, \ldots, G_k$, and then connecting just one of the endpoints of each $P_{l-1}$ in these components to $u_{1,1}$ in $G_1$ results in a connected $tP_l$-free planar graph.
Since $G_1$ is a proper subgraph of $G'$, $\rho(G') > \rho(G_1) = \rho(G)$, contradicting the maximality of $\rho(G)$. Thus, $t_1 = t-1$, it means that $G_2,G_3,\ldots,G_k$ is $P_l$-free.
Furthermore, $G_1$ must be an extremal graph in SPEX$_\mathcal{P}(n_1, tP_l)$, otherwise $G$ would not be an extremal in graph SPEX$_\mathcal{P}(n, tP_l)$.
If $n_1 \geq N$, then $G_1 \cong K_2 \vee H((t-3)l+l-1,l-1)$ for order $n_1$ by the case of connected graphs.
Now constructing a $G''$ by connecting all the longest paths in $G_2,G_3,\ldots,G_k$ to a common one of $K_2$ in $G_1$ results in a connected $tP_l$-free planar graph, again contradicting the maximality of $\rho(G)$.
Next, consider the case $n_1 < N$. By Lemma \ref{lm8}, we obtain
$$\rho=\rho(G_1)\leq 2+\sqrt{2n_1-6} < 2+\sqrt{2N-6} \leq \frac{1+\sqrt{8n-15}}{2}=\rho(K_2 \vee (n-2)K_1)$$
as $n\geq N+\frac{3}{2}\sqrt{2N-6}$ and $K_2 \vee (n-2)K_1$ is also $tP_l$-free for $t =1$ and $l \geq 6$ or $t \geq 2$ and $l \geq 4$ or $t \geq 3$ and $l \geq 3$.
This contradicts the maximality of $\rho(G)$.
Therefore, the extremal graph \(G\) must be connected. We then conclude that \(G\cong K_2\vee H_{\mathcal{P}}((t - 3)l + l - 1, l - 1)\), thus completing the proof. 
\end{proof}

\section{Concluding remarks}
In this paper, we have considered that the outerplanar and planar spectral Turán problems for linear forest and starlike tree with the same-length paths and determined the extremal graphs to \(\emph{spex}_\mathcal{P}(n,tP_l)\), \(\emph{spex}_\mathcal{OP}(n,tP_l)\), \(\emph{spex}_\mathcal{P}(n,P_{t\cdot l})\) and \(\emph{spex}_\mathcal{OP}(n,P_{t\cdot l})\) for sufficiently large \( n \), respectively.

We note that determining the final extremal graph to $\emph{spex}_\mathcal{OP}(n,P_{t\cdot l})$ and $\emph{spex}_\mathcal{P} (n,P_{t\cdot l})$ when $t = 3$, and when $t=3,4$ for sufficiently large $n$ involves considering numerous complex situations (e.g. the central vertex $v_1$ may be in $G[A]$ (or $G[R]$)). Although our specific proofs only establish that the extremal graph structure as $G\cong K_{1} \vee G[A]$ (or $G\cong K_{2} \vee G[R]$), where $G[A]$ (or $G[R]$) is a disjoint union of paths, we believe that applying the same analytical approaches and methods used in this paper can fully characterize the final extremal graphs. Based on this reasoning, we put forward the following problems about these extremal graphs:

\begin{problem}
When $t = 3$, $l \geq 3$ and $n$ is large enough, is the extremal graph to $\textit{spex}_{\mathcal{OP}}(n,P_{t\cdot l})$  $K_1 \vee H_{\mathcal{OP}}(2l - 2, l - 2)$?
\end{problem}

\begin{problem}
When $t = 3$, $l \geq 3$ and $n$ is large enough, is the extremal graph to $\textit{spex}_{\mathcal{P}}(n,P_{t\cdot l})$ $K_2 \vee H_{\mathcal{P}}(l - 1, l - 2)$?
\end{problem}

\begin{problem}
When $t = 4$, $l \geq 3$ and $n$ is large enough, is the extremal graph to $\textit{spex}_{\mathcal{P}}(n,P_{t\cdot l})$ $K_2 \vee H_{\mathcal{P}}(2l - 2, l - 2)$?
\end{problem}

For general linear forest $F(l_1,l_2,\cdots,l_t)$ and starlike tree $S(l_1,l_2,\cdots,l_t)$, two more important and interesting problems are also put forward as follows.

\begin{problem}
  Characterize planar and outerplanar spectral extremal graphs without linear forest $F(l_1,l_2,\cdots,l_t)$ for sufficiently large $n$-vertex.
\end{problem}

\begin{problem}
  Characterize planar and outerplanar spectral extremal graphs without starlike tree $S(l_1,l_2,\cdots,l_t)$ for sufficiently large $n$-vertex.
\end{problem}

\section{Acknowledgement}
We would like to show our great gratitude to anonymous referees for their valuable suggestions which greatly improved the quality of this paper.

\end{document}